\pgfplotsset{compat=1.18}
\crefname{appsec}{Appendix}{Appendices}
\newif\iflinecheck
\DeclareMathOperator{\PP}{\mathbf{P}}
\DeclareMathOperator{\EE}{\mathbb{E}}
\DeclareMathOperator{\ZZ}{\mathbb{Z}}
\DeclareMathOperator{\RR}{\mathbb{R}}
\DeclareMathOperator{\Lap}{\mathcal{L}}
\DeclareMathOperator{\mcM}{\mathcal{M}}
\DeclareMathOperator{\mcP}{\mathcal{P}}
\DeclareMathOperator{\mcMG}{\mathcal{M}_G}
\DeclareMathOperator{\Beta}{\mathbf{B}}
\DeclareMathOperator{\Ein}{Ein}
\newcommand{\linechecktext}[1]{\iflinecheck #1 \else \fi}
\newcommand{\df}[1]{\, \mathrm{d}#1}
\newcommand{\ddf}[1]{\frac{\mathrm{d}}{\mathrm{d}#1}}
\newcommand{\dddf}[2]{\frac{\mathrm{d}#1}{\mathrm{d}#2}}
\newcommand{\eps}{\varepsilon}
\newcommand{\ol}{\overline}
\renewcommand{\P}{\mathbf{P}}
\newtheorem{theorem}{Theorem}[section]
\newtheorem{corollary}[theorem]{Corollary}
\newtheorem{proposition}[theorem]{Proposition}
\newtheorem{lemma}{Lemma}[theorem]
\theoremstyle{definition}
\newtheorem{definition}[theorem]{Definition}
\theoremstyle{remark}
\newtheorem{remark}[theorem]{Remark}
\newtheorem{example}[theorem]{Example}
\newtheorem{obs}[theorem]{Observation}
\numberwithin{equation}{section}
\title{Multidisperse Random Sequential Adsorption and Generalizations}
\author{Roger Fan, Nitya Mani}
\begin{document}

\begin{abstract}
    In this paper, we present a unified study of the limiting density in \\one-dimensional random sequential adsorption (RSA) processes where segment lengths are drawn from a given distribution. In addition to generic bounds, we are also able to characterize specific cases, including multidisperse RSA, in which we draw from a finite set of lengths, and power-law RSA, in which we draw lengths from a power-law distribution.
\end{abstract}

\maketitle

\section{Introduction}

\subsection{Background} The field of \textit{random sequential adsorption} (RSA) studies processes in which particles are sequentially adsorbed onto a substrate such that the particles do not overlap. Known also as \emph{simple sequential inhibition}, \emph{on-line packing}, and the \emph{hard-core model}, RSA is a fundamental process that has been extensively studied in mathematics and statistical physics. RSA also has many applications to biological and chemical processes: for example, reactions on polymer chains have been modelled with RSA, along with various chemisorption (chemical adsorption) processes. See \cite{evanssurvey,privmansurvey} for surveys of RSA and its applications.

Meanwhile, the theoretical study of RSA has proven difficult. There is not much quantitative theory for RSA in more than one dimension (some notable results include~\cite{penrose2001,penrose2002}); even in one dimension, only a small number of RSA processes are rigorously understood, despite the large amount of interest in them. In statistical physics, many papers study RSA heuristically, often employing large scale Monte-Carlo simulations \cite{araujo2010jammed,hassan2002}.

The mathematical study of RSA began with the \emph{R\'enyi parking problem}, proposed by Alfred R\'enyi \cite{Renyi} in 1958. On the interval $[0,L]$, we randomly park a length-$1$ segment by choosing its left endpoint uniformly at random from $[0,L-1]$. Then, we randomly park a second length-$1$ segment. If the second segment intersects the first, we discard it and randomly choose a new length-$1$ segment until it does not overlap with the first. In this way, we continue parking length-$1$ segments by randomly choosing ``parking spots'' on $[0,L]$ that we discard if they would cause the segment to overlap with any previously parked segments. We repeat this process until the gaps between segments are too small for any more segments to be parked, at which point we say the process has reached \emph{saturation}. 

\begin{figure}\centering
    \begin{tikzpicture}
        \draw[black] (-5,0) -- (5,0);
        \filldraw [black] (-5,0) circle (3pt);
        \filldraw [black] (5,0) circle (3pt);
    \end{tikzpicture}
    \begin{tikzpicture}
        \draw[black] (-5,0) -- (5,0);
        \filldraw [black] (-5,0) circle (3pt);
        \filldraw [black] (5,0) circle (3pt);
        \filldraw [color=black, fill=gray] (-1.1,-.1) rectangle (-3.1,.1);
    \end{tikzpicture}
    \begin{tikzpicture}
        \draw[black] (-5,0) -- (5,0);
        \filldraw [black] (-5,0) circle (3pt);
        \filldraw [black] (5,0) circle (3pt);
        \filldraw [color=black, fill=gray] (-1.1,-.1) rectangle (-3.1,.1);
        \filldraw [color=black, fill=gray] (0.5,-.1) rectangle (2.5,.1);
    \end{tikzpicture}
    \begin{tikzpicture}
        \draw[black] (-5,0) -- (5,0);
        \filldraw [black] (-5,0) circle (3pt);
        \filldraw [black] (5,0) circle (3pt);
        \filldraw [color=black, fill=gray] (-1.1,-.1) rectangle (-3.1,.1);
        \filldraw [color=black, fill=gray] (0.5,-.1) rectangle (2.5,.1);
        \filldraw [color=black, fill=gray] (2.8,-.1) rectangle (4.8,.1);
    \end{tikzpicture}
    \caption{R\'enyi's parking problem.}
\end{figure}
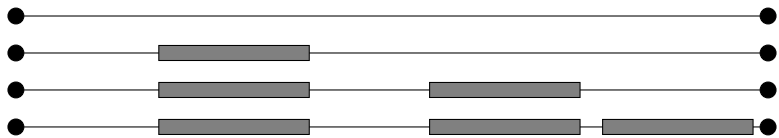

R\'enyi studied the number of segments parked at \textit{saturation}, given by the random variable $N_L$, deriving an integral recurrence equation for its expected value:
\[ \EE[N_{L+1}] = 1 + 2 \int_0^L \EE[N_t] \df{t}. \]
By studying the Laplace transform, he was able to characterize the limiting density $\EE[N_L]/L$ as $L \to \infty$.
\begin{theorem}[R\'enyi]
    Let $N_L$ be defined as above. Then,
    \[ \lim_{L \rightarrow \infty} \frac{\EE[N_L]}{L} = \alpha, \]
    where $\alpha$ is the \emph{R\'enyi parking constant}, given by
    \[ \alpha = \int_0^\infty \exp \left( -2\int_0^t \frac{1-e^{-u}}{u} \df{u} \right) \df{t} \approx 0.747598. \]
\end{theorem}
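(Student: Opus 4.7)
The plan is to follow R\'enyi's strategy and convert the recurrence into an ODE via the Laplace transform, then read off $\alpha$ from its solution. Write $m(L) := \EE[N_L]$ and interpret the recurrence in the (dimensionally consistent) form
\[
L\, m(L+1) = L + 2\int_0^L m(t)\df{t},
\]
which follows by conditioning on the uniform-on-$[0,L]$ left endpoint of the first parked segment. Setting $\phi(s) := \int_0^\infty e^{-sL} m(L)\df{L}$ (which converges for $\mathrm{Re}(s) > 0$ since $m \equiv 0$ on $[0,1)$ and grows at most linearly), and using $\mathcal{L}[L\,m(L+1)](s) = -\ddf{s}\bigl(e^s\phi(s)\bigr)$ together with $\mathcal{L}[L] = s^{-2}$ and $\mathcal{L}[\int_0^L m(t)\df{t}] = s^{-1}\phi(s)$, the integral equation transforms into the first-order linear ODE
\[
\phi'(s) + \left(1 + \frac{2e^{-s}}{s}\right)\phi(s) = -\frac{e^{-s}}{s^2}.
\]

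Second, I would solve the ODE with an integrating factor. Splitting off the $1/u$ singularity via $\int \frac{2e^{-u}}{u}\df{u} = 2\log s - 2\int_0^s \frac{1-e^{-u}}{u}\df{u} + \mathrm{const}$, the natural choice is $\mu(s) := e^s s^2 \exp\!\left(-2\int_0^s \frac{1-e^{-u}}{u}\df{u}\right)$, after which a single integration yields
\[
\mu(s)\phi(s) = C - \int_0^s \exp\!\left(-2\int_0^u \frac{1-e^{-v}}{v}\df{v}\right)\df{u}
\]
for some constant $C$. To pin down $C$ I would match large-$s$ asymptotics: the classical expansion $\int_0^s \frac{1-e^{-u}}{u}\df{u} = \log s + \gamma + o(1)$ gives $\mu(s) \sim e^{-2\gamma}e^s$, while the identity $m \equiv 1$ on $[1,2)$ together with elementary estimates gives $\phi(s) \sim e^{-s}/s$ as $s \to \infty$. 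Any $C \ne \alpha$ would force $\mu(s)\phi(s)$ toward the nonzero constant $C - \alpha$ at infinity, contradicting the required $O(1/s)$ decay; hence $C = \alpha$.

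Finally, as $s \to 0^+$ we have $\mu(s) \sim s^2$ and the remaining integral is $\sim s$, so $s^2\phi(s) \to \alpha$. Since $m$ is nondecreasing in $L$ (a standard monotonicity property of the parking process), the Karamata Tauberian theorem upgrades $\phi(s) \sim \alpha/s^2$ to $m(L) \sim \alpha L$, which is the claim. The main obstacle I anticipate is the matching step that fixes $C$: it demands simultaneous two-term asymptotic control of both $\phi$ and $\mu$ at infinity, hinging on the precise $\gamma$-asymptotic of $\int_0^s \frac{1-e^{-u}}{u}\df{u}$. Once $C = \alpha$ is secured, both the ODE integration and the Tauberian conclusion are routine.
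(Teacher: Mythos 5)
Your proposal is correct and follows essentially the same route the paper itself takes for its generalization in \cref{multi-constant} (integral recurrence, Laplace transform to a first-order linear ODE, integrating factor built from $\Ein$, asymptotics at $s\to 0^+$, and the Hardy--Littlewood Tauberian theorem), specialized to a single unit length. The only divergence is the final step: rather than invoking monotonicity of $\EE[N_L]$ to upgrade $\int_0^L \EE[N_t]\,\df{t} \sim \tfrac{\alpha}{2}L^2$ to $\EE[N_L]\sim \alpha L$, the paper substitutes the integral asymptotic back into the recurrence $\EE[N_{L+1}] = 1 + \tfrac{2}{L}\int_0^L \EE[N_t]\,\df{t}$, which yields the same conclusion without needing the (true but not entirely obvious) monotonicity claim.
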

The asymptotics of $\EE[N_L]$ are known to much greater precision~\cite{DR64}, along with a similar asymptotic characterization of the variance of $N_L$~\cite{DR64,MAN64}. Together with fairly general central limit theorems~\cite{penrose2002,penrose2001,penrose03}, this body of work characterizes the limiting distribution of $N_L$ in one-dimensional random sequential adsorption.

In this paper, we study generalizations of R\'enyi's parking problem in which we park segments of varying lengths. These processes are also known as \emph{cooperative} and \emph{competitive RSA}~\cite{burridge2004recursive, evanssurvey,subashiev2007}.
One of the most highly studied process we investigate in this work is \emph{multidisperse RSA}, in which there are $n$ different possible segment lengths, each with a unique probability of arriving at a given time. In other words, when we randomly choose a segment to park, we first randomly choose its length from the $n$ different lengths according to some underlying distribution. Then, we choose a random location for the segment on the interval by choosing its left endpoint uniformly at random from $[0,L]$. If the segment is not fully contained within the interval $[0,L]$, or if it overlaps with another segment, we discard it and choose a new segment with a new random length and position. Multidisperse RSA has been widely studied in statistical physics \cite{subashiev2007, reeve2015random,hassan2002,araujo2010jammed} and is motivated by many chemisorption processes in which a mixture of chemicals are absorbed onto a substrate \cite{ligands,phosphate}.

\subsection{Main results}
Our first main result concerns \textit{multidisperse RSA} and generalizes work of~\cite{subashiev2007} who studied the setting of two segment lengths. For any collection of $n$ segment lengths and an associated distribution, we show that each segment length is expected to cover a linear fraction of the interval $[0, L]$ at saturation. Further, in~\cref{multi-constant}, we give an exact analytical expression for the limiting fraction of $[0, L]$ covered by segments of each length at saturation.

We also study a much vaster generalization of the R\'enyi parking problem, given by choosing segment lengths from an arbitrary continuous distribution. We describe the possible lengths and their weights with a \emph{length distribution function} (ldf) $\nu : [1,\infty) \rightarrow \RR$, which describes the relative probabilities of choosing each possible segment length. When $\int_1^\infty \nu(\ell) \df{\ell} < \infty$, we call $\nu$ \emph{convergent}, and when the integral diverges, we call $\nu$ \emph{divergent}. Intuitively, convergent ldfs tend to weight small segment lengths more, whereas divergent ldfs tend to weight large segment lengths more (c.f. \cref{diff-rsa-fig}).

\begin{figure}\centering
    \begin{tikzpicture}
        \draw[black] (-8,0) -- (8,0);
        \filldraw [black] (-8,0) circle (3pt);
        \filldraw [black] (8,0) circle (3pt);

        \filldraw [color=black, fill=gray] (0.7386850905421304,-0.05) rectangle (3.8986850905421315,0.05);
        \filldraw [color=black, fill=gray] (-3.155340777610611,-0.05) rectangle (0.00465922239,0.05);
        \filldraw [color=black, fill=gray] (-5.548885041431431,-0.05) rectangle (-4.495551708098098,0.05);
        \filldraw [color=black, fill=gray] (-6.879045259325319,-0.05) rectangle (-5.825711925991985,0.05);
        \filldraw [color=black, fill=gray] (-4.426635850385518,-0.05) rectangle (-3.3733025170521844,0.05);
        \filldraw [color=black, fill=gray] (4.619589554566079,-0.05) rectangle (5.6729228878994125,0.05);
        \filldraw [color=black, fill=gray] (5.982051444566609,-0.05) rectangle (7.035384777899942,0.05);
    \end{tikzpicture}
    {\small Multidisperse RSA with lengths 1 and 3.}
    \begin{tikzpicture}
        \draw[black] (-8,0) -- (8,0);
        \filldraw [black] (-8,0) circle (3pt);
        \filldraw [black] (8,0) circle (3pt);

        \filldraw [color=black, fill=gray] (0.781115688028112,-0.05) rectangle (3.866463947798672,0.05);
        \filldraw [color=black, fill=gray] (-5.765963377963334,-0.05) rectangle (-4.906192412611011,0.05);
        \filldraw [color=black, fill=gray] (-7.68545982303797,-0.05) rectangle (-6.687362155610542,0.05);
        \filldraw [color=black, fill=gray] (-6.600325184329087,-0.05) rectangle (-5.792745343224063,0.05);
        \filldraw [color=black, fill=gray] (-2.057727813235157,-0.05) rectangle (-1.0759513607513367,0.05);
        \filldraw [color=black, fill=gray] (-3.436875462435156,-0.05) rectangle (-2.0732552920886675,0.05);
        \filldraw [color=black, fill=gray] (-4.561622834152155,-0.05) rectangle (-3.6841876343044935,0.05);
        \filldraw [color=black, fill=gray] (-0.49021861923938165,-0.05) rectangle (0.6198938550136031,0.05);
        \filldraw [color=black, fill=gray] (3.9749995070548523,-0.05) rectangle (4.960799762965462,0.05);
        \filldraw [color=black, fill=gray] (6.371684747855938,-0.05) rectangle (7.658260543023942,0.05);
        \filldraw [color=black, fill=gray] (4.9925914637581545,-0.05) rectangle (5.896858609067011,0.05);
    \end{tikzpicture}
    {\small RSA with convergent ldf $\nu(\ell) = \ell^{-2}$.}
    \begin{tikzpicture}
        \draw[black] (-8,0) -- (8,0);
        \filldraw [black] (-8,0) circle (3pt);
        \filldraw [black] (8,0) circle (3pt);
        
        \filldraw [color=black, fill=gray] (3.965462387397309,-0.05) rectangle (7.613389733712754,0.05);
    \filldraw [color=black, fill=gray] (-5.435263690947889,-0.05) rectangle (3.1865325591621905,0.05);
    \filldraw [color=black, fill=gray] (-6.611999272457058,-0.05) rectangle (-5.775207681063002,0.05);
    \filldraw [color=black, fill=gray] (-7.782849817297915,-0.05) rectangle (-6.809075558229519,0.05);
    \end{tikzpicture}
    {\small RSA with divergent ldf $\nu(\ell) = 1$.}
    \caption{Different RSA processes.}
    \label{diff-rsa-fig}
\end{figure}
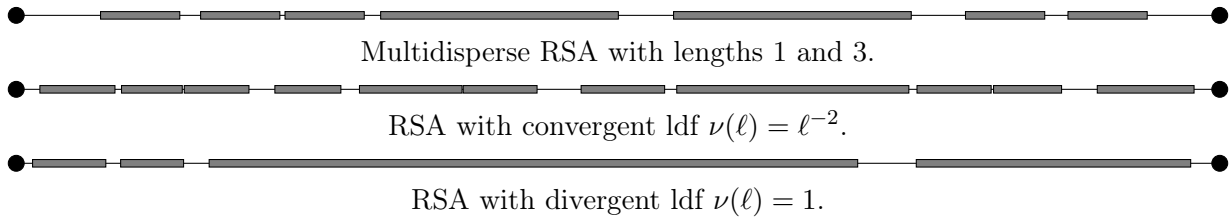

For some choices of $\nu$, we expect a $1 - o(1)$ fraction of the interval $[0, L]$ to be covered by segments. Thus, in our analysis, we typically consider $S_L$, the amount of \textit{empty space} that is \textit{not} covered by segments at saturation. We derive the following integral recurrence equation for $\EE[S_L]$:
\[ \left( \int_0^L Z_\nu(t) \df{t} \right) \EE[S_L] = 2 \int_0^{L} \EE[S_t] Z_\nu(L-t) \df{t},\]
where $Z_\nu(L) = \int_0^L \nu(\ell) \df{\ell}$ is the normalizing constant of $\nu$. By studying this recurrence, we are able to show that convergent and divergent ldfs yield fundamentally different behavior. 

In~\cref{conv-thm}, we show that when $\nu$ is convergent, under a mild additional condition, there exists some $\alpha = \alpha(\nu) > 0$ such that $\EE[S_L]/L \to \alpha$ as $L \to \infty$.
Meanwhile, in~\cref{div-thm}, we find that $\nu$ is divergent, we are able to show that (also under a mild technical growth condition) that $\EE[S_L] = o(L)$.

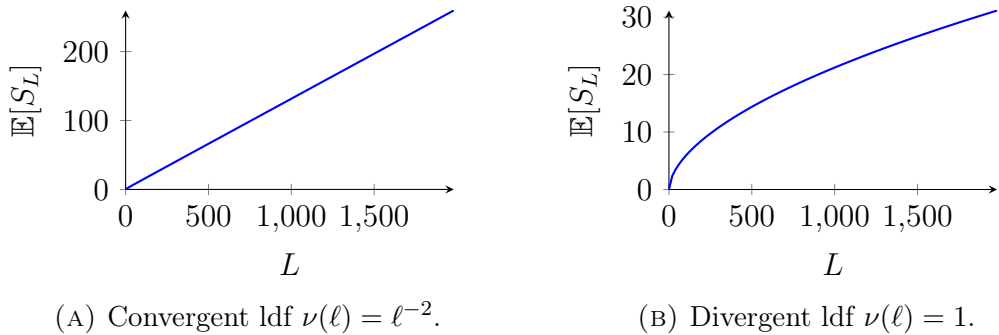
\begin{figure}
    \centering
    \begin{subfigure}{.4\textwidth}
        \begin{tikzpicture}[scale=1]
            \begin{axis} [axis lines = left,
                xlabel = \(L\),
                ylabel = {$\EE[S_L]$},
                width=.9\textwidth,
                height=.6\textwidth,
                ymin=0,
            ]
            \addplot [color=blue, thick] coordinates { 
                (0,0)(20,3.16301)(40,5.85338)(60,8.51284)(80,11.1589)(100,13.7974)(120,16.431)(140,19.0613)(160,21.689)(180,24.3148)(200,26.9391)(220,29.5621)(240,32.1841)(260,34.8052)(280,37.4256)(300,40.0453)(320,42.6645)(340,45.2832)(360,47.9014)(380,50.5193)(400,53.1368)(420,55.754)(440,58.3709)(460,60.9875)(480,63.6039)(500,66.2201)(520,68.8361)(540,71.4518)(560,74.0674)(580,76.6829)(600,79.2982)(620,81.9133)(640,84.5284)(660,87.1432)(680,89.758)(700,92.3727)(720,94.9872)(740,97.6017)(760,100.216)(780,102.83)(800,105.445)(820,108.059)(840,110.673)(860,113.287)(880,115.901)(900,118.514)(920,121.128)(940,123.742)(960,126.355)(980,128.969)(1000,131.583)(1020,134.196)(1040,136.81)(1060,139.423)(1080,142.036)(1100,144.65)(1120,147.263)(1140,149.876)(1160,152.489)(1180,155.102)(1200,157.715)(1220,160.329)(1240,162.942)(1260,165.555)(1280,168.167)(1300,170.78)(1320,173.393)(1340,176.006)(1360,178.619)(1380,181.232)(1400,183.845)(1420,186.457)(1440,189.07)(1460,191.683)(1480,194.296)(1500,196.908)(1520,199.521)(1540,202.133)(1560,204.746)(1580,207.359)(1600,209.971)(1620,212.584)(1640,215.196)(1660,217.809)(1680,220.421)(1700,223.034)(1720,225.646)(1740,228.258)(1760,230.871)(1780,233.483)(1800,236.096)(1820,238.708)(1840,241.32)(1860,243.933)(1880,246.545)(1900,249.157)(1920,251.77)(1940,254.382)(1960,256.994)(1980,259.606)
            };
            \end{axis}
        \end{tikzpicture}
        \caption{Convergent ldf $\nu(\ell) = \ell^{-2}$.}
    \end{subfigure}\qquad
    \begin{subfigure}{.4\textwidth}
        \begin{tikzpicture}[scale=1]
            \begin{axis} [axis lines = left,
                xlabel = \(L\),
                ylabel = {$\EE[S_L]$},
                width=.9\textwidth,
                height=.6\textwidth,
                ymin=0,
                xmin=0,
            ]
            \addplot [color=blue, thick] coordinates { 
                (0,0)(20,2.35927)(40,3.48121)(60,4.37094)(80,5.13704)(100,5.82264)(120,6.45023)(140,7.03335)(160,7.58094)(180,8.09923)(200,8.59282)(220,9.0652)(240,9.51909)(260,9.95668)(280,10.3797)(300,10.7897)(320,11.1879)(340,11.5753)(360,11.9529)(380,12.3213)(400,12.6813)(420,13.0336)(440,13.3785)(460,13.7167)(480,14.0484)(500,14.3742)(520,14.6942)(540,15.009)(560,15.3186)(580,15.6235)(600,15.9237)(620,16.2196)(640,16.5114)(660,16.7992)(680,17.0832)(700,17.3635)(720,17.6404)(740,17.9139)(760,18.1842)(780,18.4513)(800,18.7155)(820,18.9768)(840,19.2354)(860,19.4912)(880,19.7445)(900,19.9952)(920,20.2435)(940,20.4895)(960,20.7332)(980,20.9746)(1000,21.2139)(1020,21.4511)(1040,21.6863)(1060,21.9195)(1080,22.1508)(1100,22.3802)(1120,22.6078)(1140,22.8336)(1160,23.0577)(1180,23.2801)(1200,23.5009)(1220,23.72)(1240,23.9376)(1260,24.1537)(1280,24.3682)(1300,24.5813)(1320,24.7929)(1340,25.0032)(1360,25.2121)(1380,25.4196)(1400,25.6258)(1420,25.8307)(1440,26.0344)(1460,26.2368)(1480,26.4381)(1500,26.6381)(1520,26.837)(1540,27.0347)(1560,27.2313)(1580,27.4268)(1600,27.6212)(1620,27.8146)(1640,28.0069)(1660,28.1982)(1680,28.3884)(1700,28.5777)(1720,28.766)(1740,28.9534)(1760,29.1398)(1780,29.3253)(1800,29.5099)(1820,29.6935)(1840,29.8763)(1860,30.0583)(1880,30.2393)(1900,30.4195)(1920,30.5989)(1940,30.7775)(1960,30.9553)(1980,31.1323)
            };
            \end{axis}
        \end{tikzpicture}
        \caption{Divergent ldf $\nu(\ell) = 1$.}
    \end{subfigure}
    \caption{Growth of $\EE[S_L]$ with various length distributions.}
\end{figure}

Different RSA processes in which lengths are drawn from a distribution have been previously considered by mathematicians \cite{neydistribution,ananjevskii2016generalizations}, which our work generalizes. Moreover, apart from one or two very specific PDFs, only distributions with a maximum segment length have been studied before, but we allow arbitrarily large segment lengths, which yields a different, more complicated analysis.

Finally, we consider a class of ldfs given by power-law functions. Various specific power-law size distributions have been considered in RSA processes before \cite{power-law-circles}.~\cref{div-thm}, implies that $\EE[S_L] = o(L)$ for such power law distributions immediately. However, using more involved bounding techniques, when the distribution associated to $\nu$ follows a power law, in~\cref{plaw-asymptotics} we give asymptotically tight bounds on $\EE[S_L]$ (as $L^{f(\nu)}$ for some explicit function $f$ in the parameters of the power law distribution). We are also able to characterize the uniform length distribution as a special case of our work (c.f. \cref{puniform-asymptotic}).

\subsection{Outline}
We provide formal definitions of the processes we study and associated technical preliminaries in \cref{prelims}.

In \cref{ghost-m}, we study a related collection of processes, known as \textit{ghost} or \textit{Mat\'ern} processes, given by an appropriate thinning of classical RSA processes (see \cref{ghost-m-defn} for a formal definition). In general, ghost processes are better understood than classical RSA processes.

In \cref{multi}, we characterize the expected saturation densities of multidisperse RSA processes. In \cref{ldf}, we consider RSA with a general length distribution, and in \cref{plaw}, we study power-law length distributions.

\subsection*{Acknowledgements}
We thank the MIT PRIMES-USA research program for making this project possible. In particular, we thank Dr. Tanya Khovanova and Dr. Felix Gotti for their helpful advice throughout the paper-writing process. NM was supported by a Hertz Graduate Fellowship and the NSF GRFP.

\section{Preliminaries}\label{prelims}

\subsection{Notation}

Throughout this paper, we always park \emph{segments} on an \emph{interval}. We always use $L$ to denote the length of interval, $T$ to denote time in RSA processes, $\nu$ to denote length distribution functions (c.f. \cref{ldf-defn}), and $Z_\nu$ to denote the normalizing constant of $\nu$ (c.f. \cref{ldf-norm-defn}). We use $[n]$ to denote the set $\{1,2,\cdots,n\}$. To write that $x$ is a real number drawn uniformly at random from an interval $[a,b]$, we write $x \xleftarrow{R} [a,b]$.

We use $\RR^{>0}$ to denote the positive reals and $\RR^{\geq 0}$ to denote the nonnegative reals. We define the function $\Gamma: \RR^{> 0} \rightarrow \RR$ as the Gamma function, given by
\[ \Gamma(z) = \int_0^\infty t^{z-1}e^t \df{t}. \]

For function $f: I \to \RR$ that is $j$-times differentiable over connected interval $I$, we let $f^{(i)}(x)$ denote the $i$th derivative of $f$ with respect to $x$ for $1 \le i \le j.$

Finally, we use the following symbols for asymptotic notation (always taken as $L \rightarrow \infty$):
\begin{itemize}
    \item $f = o(g)$ and $f \ll g$ denote $\lim_{L \rightarrow \infty} f(L)/g(L) = 0$.
    \item $f(L) \sim g(L)$ denotes $\lim_{L \rightarrow \infty} f(L)/g(L) = 1$.
\end{itemize}

\subsection{The $\nu$-RSA process}

We study the $\nu$-RSA process, in which segment lengths are drawn from a distribution given by a \emph{length distribution function}. Note that if we allowed segment lengths to be any positive real, $\nu$-RSA processes with positive support on all of $(0, \eps)$ for some $\eps > 0$ would never reach saturation; at every time step, the remaining space could always fit one additional segment drawn from $\nu$. Thus, without loss of generality, we require that the minimum segment length in the distribution is $1$. More formally we have the following:

\begin{definition}\label{ldf-defn}
    A \emph{length distribution function} (or \emph{ldf}) is a nonnegative integrable function $\nu : [0,\infty) \rightarrow \RR^{\ge 0}$ such that $\nu(\ell) = 0$ for $\ell \in [0,1)$, and $\int_1^\ell \nu(t) \df{t} > 0$ for all $\ell > 1$ (the condition that the minimum segment length is equal to $1$).
\end{definition}

We will construct distributions on $[0,L]$ induced by an ldf $\nu$ on $[0,L]$. To normalize the distribution, we define a \emph{normalizing constant} for every ldf:

\begin{definition}\label{ldf-norm-defn}
    Given ldf $\nu(\ell)$, its \emph{normalizing constant} is the function $Z_\nu : \RR^{\geq 0} \rightarrow \RR^{\geq 0}$ given by
    \[ Z_\nu(L) = \int_0^L \nu(\ell) \df{\ell}. \]
\end{definition}

We also make the following distinction between \emph{convergent} and \emph{divergent} ldfs:

\begin{definition}\label{d:conv-div}
    We say an ldf $\nu(\ell)$, with normalizing constant $Z_\nu$, is \emph{convergent} if
    \[ \lim_{L \rightarrow \infty} Z_\nu (L) < \infty. \]
    Otherwise, $\nu(\ell)$ is \emph{divergent}.
\end{definition}

Given an ldf, we describe an RSA process in which segment lengths are drawn from the ldf truncated at $L$. We call this the \emph{$\nu$-RSA process}, formally defined as follows:

\begin{definition}[The $\nu$-RSA process]\label{RSA-defn}
    Let $\nu(\ell)$ be an ldf. Then, let the \emph{$\nu$-RSA process} be the following stochastic process, in which we attempt to park segments on an interval of length $L$ where the segment lengths drawn are from $\nu(\ell)$:
    
    Initialize:
    \begin{itemize}
        \item $I_0 = [0, L],$ the empty region not occupied by parked segments,
        \item $P_0 = \emptyset$, the set of parked segments.
    \end{itemize}

    Then, for $T = 1, 2, \ldots$:
    \begin{itemize}
        \item Sample $b \xleftarrow{R} [0,L]$, and choose a length $\ell \in [1,L]$ according to the probability density function $p_L(\ell) \propto \nu(\ell)$, i.e. $p_L(\ell) = \frac{\nu(\ell)}{Z_{\nu} (L) }.$
        \item If the segment $(b,b+\ell) \subseteq I_{T-1}$, let $I_T = I_{T-1} \setminus (b,b+\ell)$ and $P_T = P_{T-1} \cup \{ (b,b+\ell) \}$. We say the segment $(b,b+\ell)$ has been \emph{parked}. Otherwise, let $I_T = I_{T-1}$ and $P_T = P_{T-1}$, and we say that the segment $(b,b+\ell)$ has been \emph{rejected}.
        \item If all connected intervals in $I_T$ are of length less than $1$, we say the process is \emph{at saturation}.
    \end{itemize}

    We define the \emph{empty space} at saturation to be the random variable $S_L$, defined as the total length not covered by parked segments at saturation, viz.
    \[ S_L = \lim_{T \rightarrow \infty} \lambda(I_T), \]
    where $\lambda(I_T)$ denotes the Lesbegue measure of $I_T$.
\end{definition}

\begin{remark}\label{diff-rsa-remark}
    Ney in \cite{neydistribution} and Ananjevskii in \cite{ananjevskii2016generalizations} analyze similar processes to the one described above. However, in their processes, we first choose the segment length and \emph{then} place the segment randomly on the interval, with no possibility of rejection. Because of this, their process tends to weight large segments more than ours. Moreover, they consider length distributions with segment length bounded above, whereas we allow arbitrarily large segments, which yields a different analysis and a broader class of questions.
\end{remark}

A version of the $\nu$-RSA process is the \emph{multidisperse process} (c.f. \cref{multidisperse-defn}), in which we draw segment lengths from a discrete set of lengths $\ell_1=1, \ell_2, \cdots, \ell_n$, according to probabilities $q_1, \cdots, q_n$. The multidisperse process can be thought of as the $\nu$-RSA process with
\[ \nu(\ell) = q_1 \delta(\ell-\ell_1) + \cdots + q_n \delta(\ell-\ell_n), \]
where $\delta$ is the Dirac delta function. We will study multidisperse processes in \cref{multi}.

\subsection{Analytic Tools}

We employ the Laplace transform heavily throughout this paper, so we briefly recall its definition and some of its basic properties below.
\begin{definition}
    The \emph{Laplace transform} of a function $f : \RR^{\geq 0} \rightarrow \RR$ is the function $\Lap \{ f \} : \RR^{> 0} \rightarrow \RR$ given by
    \[ \Lap \{ f \} (s) = \int_0^\infty f(x) e^{-sx} \df{x}. \]
\end{definition}
\begin{proposition}[Laplace Transform Properties]\label{Laplace-Properties}
    Let $F(s), G(s)$ be the Laplace transforms of functions $f, g$, and let $\star$ be the convolution operator, defined as
    \[ (f \star g) (x) = \int_0^x f(t) g(x-t) \df{t} = \int_0^x f(x-t) g(t) \df{t}. \]
    The Laplace transform enjoys the following properties:
    \begin{enumerate}
        \item (Linearity) For $a, b \in \RR$, $\Lap \{ a f(x) + b g(x) \} = aF(s) + bG(s). $
        \item (Differentiation) Let $f$ be $n$-times differentiable. Moreover, let $f(0^+)$ denote the limit $\lim_{x \rightarrow 0^+} f(x)$, and let $f^{(n)}$ denote the $n$-th derivative of $f$. Then, $ \Lap \{ f^{(n)} (x) \} = s^n F(s) - \sum_{k=1}^n s^{n-k} f^{k-1} (0^+). $
        \item (Integration) $\Lap \left\{ \int_0^x f(t) \df{t} \right\} = \frac{F(s)}{s}.$
        \item (Translation) For $a \in \RR$, $ \Lap \left\{ f(x+a) \right\} = e^{as} \left( F(s) - \int_0^a f(x) e^{-sx} \df{x} \right). $
        \item (Time-Multiplication) For $n \in \ZZ^{>0}$, $ \Lap\{ x^n f(x) \} = F^{(n)} (s). $
        \item (Convolution) $\Lap \{ ( f \star g ) (x) \} = F(s) G(s).$
        \item (Abelian Final Value Theorem) If $f$ is bounded and there exists a constant $C$ for which $\lim_{x \rightarrow \infty} f(x) = C$, then $\lim_{s \rightarrow 0^+} s F(s) = C.$
    \end{enumerate}
\end{proposition}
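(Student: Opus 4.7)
The proposition is a collection of standard properties of the Laplace transform, so the plan is to verify each of the seven items in turn. Each reduces to one of a small set of elementary tools: linearity of the integral, integration by parts, the Fubini--Tonelli theorem, a change of variables, or dominated convergence to differentiate under the integral. The only nontrivial analytic content is checking the regularity/integrability conditions needed to apply these tools, and for most items this is automatic from the assumption that $F(s), G(s)$ exist.

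Concretely, I would dispatch the items as follows. \textbf{Linearity} is immediate from linearity of the integral. \textbf{Differentiation}: in the case $n=1$, integrate $\int_0^\infty f'(x) e^{-sx} \df{x}$ by parts to get $sF(s) - f(0^+)$, using that $e^{-sx} f(x) \to 0$ as $x \to \infty$ (a consequence of the existence of $F(s)$); then induct on $n$. \textbf{Integration} follows from Differentiation applied to $g(x) = \int_0^x f(t) \df{t}$, since $g'(x) = f(x)$ and $g(0^+) = 0$, giving $F(s) = s\, \Lap\{g\}(s)$. (Equivalently, convolve with the constant function $1$ and invoke the Convolution rule below.) \textbf{Translation}: substitute $u = x+a$ in $\Lap\{f(x+a)\}(s) = \int_0^\infty f(x+a) e^{-sx} \df{x}$ to obtain $e^{as} \int_a^\infty f(u) e^{-su}\df{u}$, then split the tail as $\int_0^\infty - \int_0^a$. \textbf{Time-Multiplication}: differentiate $F(s) = \int_0^\infty f(x) e^{-sx}\df{x}$ under the integral sign $n$ times, justified by dominated convergence with the local majorant $|x^n f(x) e^{-s_0 x}|$ for $s_0$ slightly less than $s$. \textbf{Convolution}: extend $f,g$ by $0$ on the negative reals and apply Fubini--Tonelli to
\[ \int_0^\infty \int_0^x f(t) g(x-t) e^{-sx} \df{t}\df{x}, \]
then substitute $u = x-t$ in the inner integral to factor the double integral as $F(s) G(s)$.

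The only item requiring a bit more care is the \textbf{Abelian Final Value Theorem}, which I expect to be the main (albeit still mild) obstacle. Here I would make the change of variables $u = sx$ to rewrite
\[ sF(s) = \int_0^\infty f(u/s) e^{-u}\df{u}, \]
and then apply dominated convergence as $s \to 0^+$: boundedness of $f$ gives the majorant $\|f\|_\infty e^{-u}$, while for each fixed $u > 0$ we have $u/s \to \infty$, so the hypothesis $\lim_{x \to \infty} f(x) = C$ forces $f(u/s) \to C$ pointwise. Thus $\lim_{s \to 0^+} sF(s) = C \int_0^\infty e^{-u}\df{u} = C$, as desired. The key subtlety is precisely that the hypothesis requires both boundedness \emph{and} the existence of the limit at infinity; without boundedness one loses the dominating function, and this is where the proof would fail for generic $f$.
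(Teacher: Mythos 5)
The paper states this proposition as a list of standard facts and offers no proof of its own, so there is nothing to compare against; your sketch supplies the standard textbook arguments (integration by parts and induction for Differentiation, Fubini--Tonelli for Convolution, the substitution $u=sx$ plus dominated convergence for the Abelian final value theorem), and these are all correct and complete at the level of a proof of such a proposition. Two small remarks. First, in the Differentiation item, the vanishing of the boundary term $f(x)e^{-sx}\to 0$ does not follow from mere existence of $F(s)$ (integrability does not force pointwise decay); one needs an exponential-order hypothesis or similar, which is the usual tacit assumption and is harmless here. Second, and more substantively, your method for Time-Multiplication (differentiating under the integral sign) yields $F^{(n)}(s)=\int_0^\infty(-x)^n f(x)e^{-sx}\,\mathrm{d}x=(-1)^n\Lap\{x^nf(x)\}$, i.e.\ $\Lap\{x^nf(x)\}=(-1)^nF^{(n)}(s)$, which differs from the identity as printed in the proposition by the factor $(-1)^n$. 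The printed statement is in fact the one that is off: elsewhere the paper correctly uses $\Lap\{L\,\EE[S_L]\}=-\varphi'(s)$, consistent with the signed version your argument produces. So your proof is right, and it exposes a sign typo in the statement rather than containing a gap itself.
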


In our analysis, we apply the Hardy-Littlewood Tauberian Theorem (c.f. \cite{TauberianTheory} p. 30), which relates the behavior of a function's Laplace transform around $0$ to the behavior of the function at infinity. This will allow us to relate various quantities related to $N_L/L$ and $S_L/L$ as $L \to \infty$ to the Laplace transforms of related functions at $0$.
\begin{theorem}[Hardy-Littlewood Tauberian Theorem]\label{Tauberian}
    If $f: \RR \rightarrow \RR$ is positive and integrable, $e^{-st} f(t)$ is integrable, and as $s \rightarrow 0$, there exist constants $H, \beta$ such that $\Lap \{ f \} (s) \sim \frac{H}{s^\beta},$
    then as $x \rightarrow \infty$,
    $\int_0^x f(t) \df{t} \sim \frac{H}{\Gamma(\beta+1)} x^{\beta}.$
\end{theorem}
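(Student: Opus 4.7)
The plan is to follow Karamata's classical approach to Tauberian theorems. First, I would reformulate the conclusion in terms of the antiderivative $F(x) := \int_0^x f(t) \df{t}$, viewing the hypothesis (after the identity $\Lap\{F\}(s) = \Lap\{f\}(s)/s$ from \cref{Laplace-Properties}) as
\[
\int_0^\infty e^{-st} \df{F(t)} \sim \frac{H}{s^\beta}, \qquad s \to 0^+,
\]
since $F(0) = 0$. The target $F(x) \sim Hx^\beta/\Gamma(\beta+1)$ is equivalent, after setting $x = 1/s$, to showing that $s^\beta F(1/s) \to H/\Gamma(\beta+1)$ as $s \to 0^+$.

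The key observation is that the hypothesis tests $\df{F}$ only against the single exponential $e^{-st}$, but rescaling $s \mapsto ks$ immediately gives the analogous asymptotic for every $e^{-kst}$, and thus by linearity for every polynomial in $e^{-st}$. A short calculation using $\int_0^\infty e^{-ku} u^{\beta-1}\df{u} = \Gamma(\beta)/k^\beta$ shows that, for any polynomial $p$ with $p(0) = 0$,
\[
s^\beta \int_0^\infty p(e^{-st}) \df{F(t)} \to \frac{H}{\Gamma(\beta)} \int_0^\infty p(e^{-u}) u^{\beta-1}\df{u}.
\]
Under the substitution $v = e^{-u}$, polynomials vanishing at $0$ are dense in $\{g \in C([0,1]) : g(0) = 0\}$ by the Stone--Weierstrass theorem, so the convergence above extends to all such continuous test functions. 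Specializing to an approximation of $\mathbf{1}_{[e^{-1},1]}(v)$ --- which, via $\mathbf{1}_{[e^{-1},1]}(e^{-st}) = \mathbf{1}_{[0,1/s]}(t)$, corresponds to $F(1/s)$ itself --- should then yield the desired limit.

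The main obstacle, which is precisely the Tauberian heart of the argument, is this final step: the indicator function is discontinuous at $v = e^{-1}$, so no uniform polynomial approximation exists. To push past this, I would sandwich $\mathbf{1}_{[e^{-1},1]}$ between continuous functions $p_\eps^- \leq \mathbf{1}_{[e^{-1},1]} \leq p_\eps^+$ that agree outside a small neighborhood of $v = e^{-1}$ and vanish at $v = 0$. The positivity of $f$ (hence monotonicity of $F$) is essential here, since it guarantees
\[
\int_0^\infty p_\eps^-(e^{-st})\df{F(t)} \leq F(1/s) \leq \int_0^\infty p_\eps^+(e^{-st})\df{F(t)};
\]
without positivity the Laplace asymptotic does not pin down $F$ at all, as oscillatory perturbations can preserve the former while destroying the latter. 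Letting $\eps \to 0$ and invoking the continuous-test-function convergence established above then pinches $s^\beta F(1/s)$ to the claimed limit $H/\Gamma(\beta+1)$, completing the proof.
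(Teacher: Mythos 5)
The paper does not prove this statement at all: it is quoted as a classical result with a citation to Korevaar's \emph{Tauberian Theory} (p.~30), so there is no internal proof to compare against. Your proposal is the standard Karamata argument for the Hardy--Littlewood theorem, and its overall architecture --- rescale the hypothesis to get the asymptotic against $e^{-kst}$, pass to polynomials in $e^{-st}$, identify the limit with $\frac{H}{\Gamma(\beta)}\int_0^\infty p(e^{-u})u^{\beta-1}\df{u}$, and then reach the indicator $\mathbf{1}_{[e^{-1},1]}$ by a monotone sandwich using the positivity of $f$ --- is correct and is exactly where the Tauberian content lives. The endgame also checks out: $\int_0^\infty \mathbf{1}_{[e^{-1},1]}(e^{-u})u^{\beta-1}\df{u} = 1/\beta$, giving the constant $H/\Gamma(\beta+1)$.

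One step is stated too loosely to stand as written: the claim that Stone--Weierstrass density of polynomials vanishing at $0$ in $\{g \in C([0,1]) : g(0)=0\}$ lets you ``extend the convergence to all such continuous test functions.'' If $p$ merely approximates $g$ uniformly within $\eps$, the error term is bounded by $\eps\, s^\beta \int_0^\infty \df{F(t)} = \eps\, s^\beta F(\infty)$, which is useless since $F(\infty)=\infty$ in the cases of interest. The standard fix is to approximate $g(v)/v$ (not $g$) uniformly by a polynomial $q$ and take $p(v) = v\,q(v)$, so that $|g(v)-p(v)| \leq \eps v$ and the error is controlled by $\eps\, s^\beta \int_0^\infty e^{-st}\df{F(t)} \to \eps H$; this requires $g(v)/v$ to extend continuously to $[0,1]$, which fails for general $g$ with $g(0)=0$ but holds for your sandwich functions $p_\eps^\pm$ provided you choose them to vanish identically on a neighborhood of $v=0$ (which costs nothing, since the indicator already vanishes on $[0,e^{-1})$). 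With that repair the proof is complete; as is, the density step is a genuine gap, albeit a standard and easily patched one.
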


We also will later require a couple technical propositions, which we will prove here. Analogous forms of \cref{diffineq} and \cref{negdiffineq} with the opposite inequality also hold.

First, we observe that relationships between the $k$-th derivatives of two functions sometimes yield inequalities between the functions.
\begin{proposition}\label{diffineq}
    Let $y, v : [a,b) \rightarrow \RR$ be $n$-times differentiable functions, and for all $k \in \{0,1,\ldots,n-1\}$, let $c_k$ be a nonnegative function $c_k : [a,b) \to \RR^{\ge 0}$. Finally, let $C$ be a function $C : [a,b) \rightarrow \RR$. If $y^{(k)} (a) = v^{(k)} (a)$ for each $k \in \{0,1,\ldots,n-1\}$, and if for all $x \in [a,b)$,
    \[ y^{(n)} (x) < C(x) + \sum_{k = 0}^{n-1} c_k(x) y^{(k)}(x), \qquad v^{(n)} (x) = C(x) + \sum_{k = 0}^{n-1} c_k(x) v^{(k)}(x), \]
    then for all $x \in [a,b)$, $y(x) \leq v(x)$.
\end{proposition}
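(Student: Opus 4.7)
\emph{Proof plan.} Set $w := v - y$. By linearity, the hypotheses become $w^{(k)}(a) = 0$ for $0 \le k \le n-1$, together with the strict pointwise differential inequality
\[
w^{(n)}(x) \;>\; \sum_{k=0}^{n-1} c_k(x)\, w^{(k)}(x), \qquad x \in [a,b),
\]
and the goal becomes $w \ge 0$ on $[a,b)$. I would argue by contradiction via a ``first violation'' principle: if any derivative $w^{(k)}$, $k \le n-1$, ever dips below zero, study the infimum of the offending $x$ and use the strict inequality to rule it out.

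As a startup, I would verify that all $w^{(k)}$ with $k \le n-1$ are in fact strictly positive on some right-neighborhood $(a, a+\eps)$. Plugging $x = a$ into the inequality and using that all lower derivatives vanish there yields $w^{(n)}(a) > 0$; combined with $w^{(n-1)}(a) = 0$, the definition of derivative forces $w^{(n-1)}(x) > 0$ for $x$ just right of $a$. Inductively, each lower $w^{(k-1)}$ starts at $0$ with positive right-derivative, so it too is strictly positive on $(a, a+\eps)$.

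Now let $x^{*} := \inf\{x \in (a,b) : w^{(k)}(x) < 0 \text{ for some } 0 \le k \le n-1\}$. The previous step gives $x^{*} > a$, and continuity of each $w^{(k)}$ yields $w^{(k)}(x^{*}) \ge 0$ for every $k$, with some index $k_0$ achieving equality (otherwise a whole neighborhood of $x^{*}$ would avoid the violation set). The technical heart of the argument is propagating this zero upward: if $k_0 \le n-2$, then $w^{(k_0)}$ attains its minimum over $[a, x^{*}]$ at the right endpoint, so the left-hand difference quotient forces $w^{(k_0+1)}(x^{*}) \le 0$; combined with $w^{(k_0+1)}(x^{*}) \ge 0$ this gives equality, and iterating yields $w^{(k)}(x^{*}) = 0$ for all $k_0 \le k \le n-1$.

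To finish, plugging $x^{*}$ into the differential inequality and using $c_k \ge 0$ together with the vanishings just established,
\[
w^{(n)}(x^{*}) \;>\; \sum_{k=0}^{k_0-1} c_k(x^{*})\, w^{(k)}(x^{*}) \;\ge\; 0.
\]
Thus $w^{(n-1)}$ has strictly positive derivative at $x^{*}$, so from $w^{(n-1)}(x^{*}) = 0$ we conclude $w^{(n-1)}(x) < 0$ just to the left of $x^{*}$, contradicting $w^{(n-1)} \ge 0$ on $[a, x^{*})$. I expect the upward-propagation step from $w^{(k_0)}(x^{*})=0$ up to $w^{(n-1)}(x^{*})=0$ to be the main subtlety; once it is in hand, the strict inequality in the hypothesis automatically closes the contradiction.
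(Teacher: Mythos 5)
Your proposal is correct, and it follows the same skeleton as the paper's proof: set $w=v-y$, show all of $w^{(0)},\dots,w^{(n-1)}$ are strictly positive on a right-neighborhood of $a$, and then examine the infimum $x^*$ of the set where some derivative of order at most $n-1$ goes negative. The only genuine divergence is in how the contradiction is closed. The paper takes the index $K$ with $f^{(K)}(m)\le 0$ and walks upward via repeated applications of the mean value theorem, producing interior points $x_{K+1},\dots,x_n\in(a,m)$ with $f^{(j)}(x_j)\le 0$, and lands the contradiction $0\ge f^{(n)}(x_n)>\sum_k c_k(x_n)f^{(k)}(x_n)\ge 0$ at the interior point $x_n<m$, where all lower-order derivatives are still nonnegative. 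You instead stay at $x^*$: since $w^{(k_0)}\ge 0$ on $[a,x^*]$ with a zero at the right endpoint, the left difference quotient forces $w^{(k_0+1)}(x^*)\le 0$, hence $=0$, and iterating (correctly stopping at order $n-1$, since nothing is known about the sign of $w^{(n)}$ to the left of $x^*$) gives $w^{(n-1)}(x^*)=0$; then $w^{(n)}(x^*)>0$ forces $w^{(n-1)}<0$ just left of $x^*$, contradicting nonnegativity there. Both closings are valid; yours avoids tracking a chain of auxiliary points at the cost of the one-sided propagation argument you rightly flag as the subtle step. One phrasing nit in your startup: the right-derivative of $w^{(k-1)}$ at $a$ is $w^{(k)}(a)=0$, not positive; what you actually use is that $w^{(k)}>0$ on $(a,a+\eps)$, so $w^{(k-1)}$ increases strictly from its value $0$ at $a$ and is therefore positive there.
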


\begin{proof}
    Consider $f(x) = v(x) - y(x)$. Note that $f^{(k)} (a) = 0$ for $k < n$, and
    \begin{equation}\label{diffineq-fineq}
        f^{(n)} (x) > \sum_{k = 0}^{n-1} c_k(x) f^{(k)}(x).
    \end{equation}

    Notably, $f^{(n)} (a) > 0$. We now claim there exists $\eps$ such that $f^{(k)} (x)$ is positive on $(a,a+\eps)$ for all $k \leq n$. This is because $f^{(k)} (x)$ satisfies
    \[ \left( f^{(k)} \right)^{(1)} (a) = \left( f^{(k)} \right)^{(2)} (a) = \cdots = \left( f^{(k)} \right)^{(n-k-1)} (a) = 0, \qquad \left( f^{(k)} \right)^{(n-k)} (a) > 0,\]
    where $\left( f^{(k)} \right)^{(i)}$ is the $i$-th derivative of $f^{(k)}$. The general derivative test on $f^{(k)}$ then implies that $f^{(k)}$ is strictly increasing in a small neighborhood $[a, a+\eps_k)$ around $a$. As $f^{(k)} (a) = 0$, this implies that $f^{(k)} (x)$ is positive on $(a, a+ \eps_k)$. Letting $\eps$ be the minimum of the $\eps_k$ gives us our desired interval $(a, a+\eps)$.

    Now, for contradiction, assume there exists $x \in [a,b)$ such that $f(x) < 0$. Then, the following infimum exists: 
    \[ m := \inf \{x \in [a,b) : f^{(k)} (x) < 0 \text{ for some } k < n \} .\]
    Note that $m \geq a + \eps$, as the $f^{(k)}$'s are nonnegative on $[a,a+\eps)$. We claim there exists some $K < n$ for which $f^{(K)} (m) \leq 0$. Otherwise, $f^{(k)} (m) > 0$ for all $k < n$, and by continuity of the $f^{(k)}$'s, there exists a sufficiently small $\eps'$ for which all $f^{(k)} (x)$ are positive on $[m,m+\eps']$, which contradicts the definition of $m$.

    Thus, let $f^{(K)} (m) \leq 0$, and let $x_K = m$. By the mean value theorem, there exists some $x_{K+1}$ in $(a,m)$ such that 
    \[ f^{(K+1)} (x_{K+1}) = \frac{f^{(K)} (x_K) - f^{(K)} (a) }{ x_K - a } = \frac{f^{(K)} (x_K) }{x_K - a} \leq 0. \]
    We may repeatedly apply the mean value theorem in this fashion, showing that there exists $x_{K+2} \in (0,m)$ for which $f^{(K+2)} (x_{K+2}) \leq 0$, and so on, until we find $x_n \in (a,m)$ for which $f^{(n)} (x_n) \leq 0$. However, because $x_n < m$, we know $f^{(k)} (x_n) \geq 0$ for all $k < n$. By \cref{diffineq-fineq}, we have 
    \[ 0 \geq f^{(n)} (x_n) > \sum_{k = 0}^{n-1} c_k (x_n) f^{(k)} (x_n) \geq 0.\]
    Of course, it is impossible for $0 > 0$, so $f(x) = v(x) - y(x) \geq 0$ on $[a,b)$. This implies the lemma.
\end{proof}

\begin{proposition}\label{negdiffineq}
    Let $y, v: (a,b] \rightarrow \RR$ be differentiable functions such that $y(b) = v(b)$ and for all $x \in (a,b]$,
    \[ -y'(x) < C(x) + c_0(x) y, \]
    \[ -v'(x) = C(x) + c_0(x) v, \]
    where $c_0(x) : (a,b] \rightarrow \RR^{\geq 0}$ and $C : (a,b] \rightarrow \RR$. Then, $y \leq v$ on $(a,b)$.
\end{proposition}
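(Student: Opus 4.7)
The plan is to reduce \cref{negdiffineq} to the $n=1$ case of \cref{diffineq} via a change of variable that reverses the interval. Specifically, I define $\tilde y, \tilde v : [a,b) \to \RR$ by $\tilde y(t) = y(a+b-t)$ and $\tilde v(t) = v(a+b-t)$, and likewise $\tilde C(t) = C(a+b-t)$ and $\tilde c_0(t) = c_0(a+b-t)$. The chain rule gives $\tilde y'(t) = -y'(a+b-t)$, so the hypothesis $-y'(x) < C(x) + c_0(x)\,y(x)$ on $(a,b]$ transforms into $\tilde y'(t) < \tilde C(t) + \tilde c_0(t)\,\tilde y(t)$ on $[a,b)$; the analogous identity $\tilde v'(t) = \tilde C(t) + \tilde c_0(t)\,\tilde v(t)$ also holds. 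The boundary condition $y(b) = v(b)$ becomes the initial condition $\tilde y(a) = \tilde v(a)$, and $\tilde c_0 \geq 0$ since $c_0 \geq 0$.

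These are exactly the hypotheses of \cref{diffineq} with $n = 1$, so applying that proposition yields $\tilde y(t) \leq \tilde v(t)$ for every $t \in [a,b)$. Unwinding the substitution gives $y(x) \leq v(x)$ for every $x \in (a,b]$, which contains the required conclusion on $(a,b)$. There is no fundamental obstacle: all the real work was done in \cref{diffineq}, and the only subtlety is bookkeeping, namely checking that the minus sign in the hypothesis $-y'(x) < \cdots$ of \cref{negdiffineq} cancels exactly with the sign produced by the chain rule under the reversal $t \mapsto a+b-t$.

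If a self-contained argument is preferred, essentially the same proof can be given directly: setting $f = v - y$, one has $f(b) = 0$ and $f'(x) + c_0(x)\,f(x) < 0$ on $(a,b]$, so multiplying by the positive integrating factor $\mu(x) = \exp\!\bigl(\int_b^x c_0(s)\,\df{s}\bigr)$ yields $(\mu f)'(x) < 0$. Hence $\mu(x) f(x) \geq \mu(b) f(b) = 0$ for all $x \in (a,b]$, and since $\mu > 0$ we conclude $y \leq v$. I expect to use the reduction approach in the paper, since \cref{diffineq} is immediately at hand and the reduction makes the parallel between the two propositions transparent.
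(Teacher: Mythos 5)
Your primary argument is correct and is essentially the paper's own proof: the paper also reduces to \cref{diffineq} with $n=1$ by reversing the variable (it uses $x \mapsto -x$, mapping $(a,b]$ to $[-b,-a)$, rather than your $t \mapsto a+b-t$, but this is immaterial). The sign bookkeeping you flag works out exactly as you describe, so no further changes are needed.
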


\begin{proof}
    Let $y_\star (x) = y(-x)$ and $v_\star (x) = v(-x)$. Using our given conditions, the substitution $x = -z$ yields
    \[ y_\star'(z) < C(-z) + c_0(-z) y_\star(-z), \]
    \[ v_\star'(z) = C(-z) + c_0(-z) y_\star(-z), \]
    for all $z \in [-b,-a)$. This now satisfies \cref{diffineq}, which implies that $y_\star (z) \leq v_\star(z)$ on $[-b,-a)$, and thus that $y(x) \leq v(x)$ on $(a,b]$.
\end{proof}

\begin{proposition}\label{limitfromupperlower}
    Consider function $f : (0, a) \rightarrow \RR$. If for all $\eps > 0$, there exists $\ell = \ell(\eps) \in \RR$ and $\delta = \delta(\eps) \in \RR^{>0}$ such that for all $x \in (0 , \delta)$, we have $f(x) \in (\ell, \ell + \eps)$, then $\lim_{x \rightarrow 0^+} f(x)$ exists.
\end{proposition}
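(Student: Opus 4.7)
The plan is to verify the Cauchy criterion for one-sided limits at $0^+$, which together with the completeness of $\RR$ yields the existence of $\lim_{x \to 0^+} f(x)$. Concretely, the goal is to show that for every $\eps > 0$ there exists $\delta > 0$ such that $|f(x) - f(y)| < \eps$ for all $x, y \in (0, \delta)$.

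This Cauchy condition follows directly from the hypothesis. Given $\eps > 0$, I would apply the hypothesis with parameter $\eps$ to obtain $\ell = \ell(\eps) \in \RR$ and $\delta = \delta(\eps) > 0$ such that $f((0, \delta)) \subseteq (\ell, \ell + \eps)$. Any two points $x, y \in (0, \delta)$ then satisfy $f(x), f(y) \in (\ell, \ell + \eps)$, so $|f(x) - f(y)| < \eps$, which is exactly the Cauchy condition.

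To pass from the Cauchy criterion to existence of the limit, I would pick any sequence $x_n \to 0^+$; by the Cauchy condition the sequence $\{f(x_n)\}$ is Cauchy in $\RR$ and hence converges to some $L \in \RR$. For any other sequence $y_n \to 0^+$, interleaving with $\{x_n\}$ produces another sequence tending to $0^+$ whose $f$-image must likewise be Cauchy and convergent, forcing $f(y_n) \to L$ as well. The $\eps$-$\delta$ formulation $\lim_{x \to 0^+} f(x) = L$ then comes directly from applying the Cauchy bound together with the fact that $f(x_n) \to L$ for one fixed sequence. As a clean alternative, one could instead observe that the intervals $I_n := (\ell(1/n), \ell(1/n) + 1/n)$ given by the hypothesis are pairwise intersecting (any $x \in (0, \min(\delta(1/n), \delta(1/m)))$ witnesses $f(x) \in I_n \cap I_m$), so by the one-dimensional Helly property their closures share a common point $L$, and then $|f(x) - L| \le 1/n$ for $x \in (0, \delta(1/n))$. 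The argument is entirely elementary, so there is no substantive obstacle; the only choice is a stylistic one between invoking completeness abstractly via Cauchy or exhibiting $L$ explicitly as a nested-interval intersection.
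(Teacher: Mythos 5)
Your proof is correct, and it takes a genuinely different (though equally elementary) route from the paper. The paper's proof is only a one-line sketch: it identifies the limit explicitly as $\sup_{\eps > 0} \ell(\eps)$ and verifies that this supremum satisfies the $\eps$--$\delta$ definition, using that each $\ell(\eps)$ is squeezed within $\eps$ of the sup. You instead prove existence abstractly via the Cauchy criterion: the hypothesis immediately gives $|f(x) - f(y)| < \eps$ for all $x, y$ in a common punctured neighborhood of $0$, and completeness of $\RR$ does the rest; your nested-interval/Helly alternative is essentially a constructive version of the same idea and is also sound (the pairwise-intersection claim is correctly witnessed by evaluating $f$ at a point in the intersection of the two $\delta$-neighborhoods). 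What the paper's approach buys is an explicit formula for the limit, which is occasionally convenient downstream; what yours buys is that you never need to argue that the supremum is finite or pin down its relation to each $\ell(\eps)$ --- the Cauchy condition is verified in one line. Both are complete and correct; the difference is purely stylistic.
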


\begin{proof} 
    The proof can be done by showing $\lim_{x \rightarrow 0^+} f(x) = \sup_{ \eps > 0 } l(\eps)$ and is omitted here for brevity.
\end{proof}

\section{Warmup: The Multidisperse Ghost Process}\label{ghost-m}

Here, we compute the behavior of the expected saturation density of multidisperse ghost processes, formally defined below. This process admits a simpler analysis than the multidisperse process because each step is independent of the previous ones. While in the usual multidisperse process, we must consider if any previous segments have been parked, we do not have to consider this in the ghost process.

\begin{definition}[Multidisperse Ghost Process]\label{ghost-m-defn}
    Fix lengths $\ell_1 = 1, \ldots, \ell_n \in \RR^{>0}$ and rates $q_1, \ldots, q_n \in \RR^{>0}$ such that $\ell_1 < \ell_2 < \cdots < \ell_n$ and $q_1 + \cdots + q_n = 1.$ Then, let the $( (\ell_1,q_1), \cdots, (\ell_n,q_n) )$-\emph{multidisperse ghost process} (abbreviated as $\mcMG( (\ell_1,q_1), \cdots, (\ell_n,q_n) )$) be the stochastic process defined as follows. 

    Initialize:
    \begin{itemize}
        \item $I_0 = [0, L],$ the empty interval not occupied by parked or ghost segments,
        \item $P_0 = \emptyset $, the set of parked segments.
    \end{itemize}

    At times $T = 1, 2, \ldots$,
    \begin{enumerate}
        \item Sample candidate segment center $c \xleftarrow{R} \left[- \frac{1}{2} \ell_n,L+ \frac{1}{2} \ell_n\right]$, and sample $k \in [n]$ with probability $q_k$. We say that segment $\left(c - \frac{1}{2} \ell_k,c+\frac{1}{2} \ell_k\right)$ is a \emph{candidate segment} of \emph{type $k$}.

        \item If $\left(c - \frac{1}{2} \ell_k,c+\frac{1}{2} \ell_k\right) \subseteq I_{T-1}$, let $P_T = P_{T-1} \cup \{ \left(c - \frac{1}{2} \ell_k,c+\frac{1}{2} \ell_k\right) \}.$ We say the candidate segment has been \emph{parked}. Otherwise, let $P_T = P_{T-1}$, and we say that the candidate segment has become a \emph{ghost}.

        \item Regardless of whether the segment has been parked, let $I_T = I_{T-1} \setminus \left(c - \frac{1}{2} \ell_k,c+\frac{1}{2} \ell_k\right)$.
    \end{enumerate}

    We define the \emph{type $k$ saturation number} as the total number of type $k$ segments parked, at saturation:
    \begin{equation}\label{ghost-N-defn}
        N_{k,L} = \lim_{t\rightarrow \infty} | \{ A \in P_t : \lambda(A) = \ell_k \} |.
    \end{equation}
    Moreover, let $J_L$ be the random variable representing the total length of parked segments, viz.
    \begin{equation}\label{ghost-J-defn}
        J_L = \lim_{T \rightarrow \infty} \sum_{A \in P_T} \lambda(A). 
    \end{equation}
    This can also be expressed in terms of the variables $N_{k,L}$, as $ J_L = \sum_{k=1}^n \ell_k N_{k,L}. $
\end{definition}

\begin{remark}
    This definition allows segment centers to be parked on $[-\frac{1}{2} \ell_n, 0]$ and $[L, L + \frac{1}{2} \ell_n]$. If this weren't allowed, then the behavior of the process would change at the ends of the interval $[0,L]$. Moreover, we use segment centers in this definition to simplify the following proofs, but for most other RSA processes, (c.f. \cref{RSA-defn}, \cref{multidisperse-defn}), we use the left endpoints of parked segments to reference them.
\end{remark}

For the remainder of the section, the $\ell_i$'s will always be positive reals representing the segment lengths in the multidisperse ghost process, and the $q_i$'s will always be positive reals summing to $1$ that represent the probabilities of choosing each segment. We implicitly assume $\ell_1 = 1$ and $\ell_1 < \ell_2 < \cdots < \ell_n$. Moreover, we will always use $\overline{\ell}$ to denote the \emph{average segment length}, defined as follows:
\begin{definition}
    In $\mathcal{M}_G ((\ell_1,q_1), \cdots, (\ell_n,q_n))$, the \emph{average segment length} $\overline{\ell}$ is given by $\overline{\ell} = \sum_{i = 1}^n q_i \ell_i.$
\end{definition}

We are now able to derive a simple asymptotic for the multidisperse ghost process.

\begin{proposition}\label{ghost-m-lim}
    In $\mcMG ((\ell_1,q_1), \cdots,(\ell_n,q_n))$, the type $k$ saturation number $N_{k,L}$ (c.f. \cref{ghost-m-defn}) satisfies
    \[ \lim_{L \rightarrow \infty } \frac{\EE[ N_{k,L} ]}{L} = \frac{q_k}{\overline{\ell} + \ell_k }.\]

    Moreover,
    \begin{equation}\label{ghost-m-JL}
        \lim_{L \rightarrow \infty } \frac{\EE[ J_{L} ]}{L} = \sum_{k=1}^n \frac{q_k \ell_k}{\overline{\ell} + \ell_k},
    \end{equation}
\end{proposition}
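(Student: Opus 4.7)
The plan is to Poissonize the candidate arrivals and compute the parking probability directly via the independence structure of the Poisson process. Concretely, I would model the candidate stream as a Poisson point process $\Pi$ on $[-\ell_n/2, L + \ell_n/2] \times [n] \times [0, \infty)$ with intensity $q_k \df{c}\df{t}$, where $(c,k,t) \in \Pi$ represents a type-$k$ candidate with center $c$ arriving at time $t$. Ordering the points of $\Pi$ by their time coordinates produces an i.i.d.\ sequence of candidates identical in joint distribution to that described in \cref{ghost-m-defn}; since the set of parked segments depends only on this ordered sequence together with the centers and types, $N_{k,L}$ in the Poissonized model has the same law as in the original.

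For a point $(c, k, t) \in \Pi$, I observe that this candidate is parked if and only if no earlier $(c', k', t') \in \Pi$ with $t' < t$ satisfies $|c - c'| < (\ell_k + \ell_{k'})/2$, since two segments with those centers and types overlap precisely when this inequality holds. For $c$ in the bulk $B_L := [\ell_n, L - \ell_n]$, the conflict region for type-$j$ centers is an interval of length $\ell_k + \ell_j$ contained entirely in the candidate space. By Slivnyak's theorem (conditioning on $(c,k,t) \in \Pi$ leaves the remaining atoms Poisson with the same intensity) together with independence across types, the parking probability equals
\[ \prod_{j=1}^n \exp\bigl(-q_j(\ell_k + \ell_j)t\bigr) = e^{-t(\ell_k + \overline{\ell})}. \]

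Integrating this against the Poisson intensity over $B_L$ using Campbell's formula gives
\[ \int_{B_L}\int_0^\infty q_k\, e^{-t(\ell_k + \overline{\ell})} \df{t}\df{c} = \frac{q_k(L - 2\ell_n)}{\ell_k + \overline{\ell}}. \]
For $c$ in the boundary zone $[-\ell_n/2, L + \ell_n/2] \setminus B_L$, a short geometric check shows that each type-$j$ conflict interval still has intersection with the candidate space of length at least $(\ell_k + \ell_j)/2$, so the parking probability is at most $e^{-t(\ell_k + \overline{\ell})/2}$; since this boundary zone has length $3\ell_n$ independent of $L$, it contributes only $O(1)$ to $\EE[N_{k,L}]$. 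Consequently $\EE[N_{k,L}] = q_k L/(\ell_k + \overline{\ell}) + O(1)$, and dividing by $L$ and letting $L \to \infty$ proves the first claim. The second formula \cref{ghost-m-JL} then follows immediately from the identity $J_L = \sum_{k=1}^n \ell_k N_{k,L}$ and linearity of expectation. The main obstacle I anticipate is the careful bookkeeping of boundary contributions and rigorously justifying the Poissonization, but both are largely mechanical — the latter relying only on the fact that a Poisson process conditioned on its point count has i.i.d.\ uniform atoms.
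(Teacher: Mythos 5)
Your proof is correct, but it takes a genuinely different route from the paper's. The paper works directly with the discrete-time i.i.d.\ sequence of candidates: it computes the probability that the candidate at time $T$ is an admissible type-$k$ segment and conflicts with none of the $T-1$ earlier candidates, obtaining $q_k\frac{L-\ell_k}{L+\ell_n}\left(1-\frac{\overline{\ell}+\ell_k}{L+\ell_n}\right)^{T-1}$, and sums the geometric series to get the \emph{exact} identity $\EE[N_{k,L}]=\frac{q_k(L-\ell_k)}{\overline{\ell}+\ell_k}$ with no error term. The reason no boundary correction appears there is the same geometric fact you exploit in your bulk computation: because candidate centers range over the extended interval $[-\frac{1}{2}\ell_n,L+\frac{1}{2}\ell_n]$, the conflict window of \emph{any} candidate whose segment lies in $[0,L]$ is entirely contained in the candidate space, so the non-conflict probability is exactly $1-\frac{\overline{\ell}+\ell_k}{L+\ell_n}$ for every admissible center; had you taken your bulk to be $[\frac{1}{2}\ell_k,L-\frac{1}{2}\ell_k]$ rather than $[\ell_n,L-\ell_n]$ you would have recovered the paper's exact formula and dispensed with the boundary zone. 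Your Poissonization plus Slivnyak--Mecke--Campbell argument trades that exactness for an $O(1)$ boundary error and the (routine) burden of justifying that ordering the Poisson atoms by time reproduces the original i.i.d.\ candidate stream, but it buys robustness: the Palm-calculus computation of the survival probability $e^{-t(\ell_k+\overline{\ell})}$ extends with no extra work to continuous length distributions or higher-dimensional ghost processes, whereas the paper's geometric-series bookkeeping is tied to the discrete finite-type setting. Both arguments rest on the same core independence structure and both correctly yield $\lim_{L\to\infty}\EE[N_{k,L}]/L=\frac{q_k}{\overline{\ell}+\ell_k}$, from which \cref{ghost-m-JL} follows by linearity exactly as you say.
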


\begin{proof}
    Fix $k \in [n]$. At time $T$, we choose a type $k$ candidate segment with probability $q_k$, and this segment is entirely contained within $[0,L]$ exactly when the segment center $c \in \left[\frac{1}{2} \ell_k,L - \frac{1}{2} \ell_k\right]$, which occurs with probability $\frac{L-\ell_k}{L+\ell_n}$.

    Now, assume we have chosen candidate segment $\left(c - \frac{1}{2} \ell_k,c+\frac{1}{2} \ell_k\right) \subseteq [0,L],$ so $c$ is fixed. Fix time $T' < T$. To compute the probability that the candidate segments at times $T$ and $T'$ intersect, we condition over the type of the candidate segment at time $T'$.
    
    Assume that at time $T'$, we have chosen a candidate segment of type $i$ with center $c'$. The candidate segments at times $T$ and $T'$ do not intersect when $|c-c'| > \frac{1}{2} ( \ell_i + \ell_k )$. Since $c'$ is uniformly distributed on $\left[- \frac{1}{2} \ell_n,L+ \frac{1}{2} \ell_n\right]$, the candidate segments do not intersect with probability $ \frac{L+\ell_n- (\ell_i + \ell_k)}{L + \ell_n} .$

    The probability of choosing type $i$ at time $T'$ is simply $q_i$, so the probability that the candidate segments at times $T$ and $T'$ do not intersect is
    \[ \sum_{i=1}^n q_i \cdot \frac{L+\ell_n- (\ell_i + \ell_k)}{L + \ell_n} = 1 - \frac{\overline{\ell} + \ell_k }{L+\ell_n}. \]

    The candidate segments before time $T$ are independently chosen, so in fact, the probability that the candidate at time $T$ intersects with no previous candidate is $\left( 1 - \frac{\overline{\ell} + \ell_k }{L+\ell_n} \right)^{T-1}.$ Thus, the probability that at time $T$, we successfully park a type $k$ segment is
    \[ q_k \cdot \frac{L-\ell_k}{L+\ell_n} \cdot \left( 1 - \frac{\overline{\ell} + \ell_k }{L+\ell_n} \right)^{T-1}. \]
    Summing over times $T$ from $1$ to $\infty$, the expected number of parked type $k$ segments is
    \[ \EE[N_{k,L} ] = \sum_{T=1}^\infty q_k \cdot \frac{L-\ell_k}{L+\ell_n} \cdot \left( 1 - \frac{\overline{\ell} + \ell_k}{L+\ell_n} \right)^{T-1} = \frac{q_k (L - \ell_k)}{\overline{\ell} + \ell_k}. \]
    Thus, $\lim_{L \rightarrow \infty} \frac{\EE[N_{k,L} ]}{L} = \frac{q_k}{\overline{\ell} + \ell_k}$. The formula for $J_L$ follows by $J_L = \sum_{k=1}^n \ell_k N_{k,L}$.
\end{proof}

We now prove bounds on $\lim_{L \rightarrow \infty}\frac{\EE[J_L]}{L}$.

\begin{corollary}\label{ghost-m-bound2}
    In $\mcMG ((\ell_1,q_1), (\ell_2,q_2))$, the jamming length satisfies
    \[ \frac{2 \sqrt{\ell_1 \ell_2} }{(\sqrt{\ell_1} + \sqrt{\ell_2})^2 } \leq \lim_{L \rightarrow \infty } \frac{\EE[ J_{L} ]}{L} \leq \frac{1}{2}. \]
    The maximum is achieved when either $q_1 = 0$ or $q_2 = 0$. The minimum is achieved when $ \overline{\ell} = \sqrt{\ell_1 \ell_2}. $ 
\end{corollary}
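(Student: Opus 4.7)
The plan is to apply the explicit formula of \cref{ghost-m-lim} in the two-type case and then reduce both bounds to a single-variable optimization in $\overline{\ell}$. Specializing \cref{ghost-m-JL} to $n=2$ and combining the two summands over the common denominator $(\overline{\ell}+\ell_1)(\overline{\ell}+\ell_2)$, the constraint $q_1+q_2=1$ (combined with $q_1\ell_1+q_2\ell_2=\overline{\ell}$) collapses the numerator so that
\[ \lim_{L\to\infty}\frac{\EE[J_L]}{L} \;=\; \frac{q_1\ell_1}{\overline{\ell}+\ell_1}+\frac{q_2\ell_2}{\overline{\ell}+\ell_2} \;=\; \frac{\overline{\ell}^{\,2}+\ell_1\ell_2}{(\overline{\ell}+\ell_1)(\overline{\ell}+\ell_2)}. \]
Since any pair $(q_1,q_2)$ with $q_i\ge 0$ and $q_1+q_2=1$ determines some $\overline{\ell}\in[\ell_1,\ell_2]$ (and conversely), it suffices to bound this rational function of $\overline{\ell}$ over that interval.

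For the upper bound, I will clear denominators: using $(\overline{\ell}+\ell_1)(\overline{\ell}+\ell_2)=\overline{\ell}^{\,2}+(\ell_1+\ell_2)\overline{\ell}+\ell_1\ell_2$, the inequality $\frac{\overline{\ell}^{\,2}+\ell_1\ell_2}{(\overline{\ell}+\ell_1)(\overline{\ell}+\ell_2)}\le\tfrac{1}{2}$ is equivalent to $(\overline{\ell}-\ell_1)(\overline{\ell}-\ell_2)\le 0$, which is automatic on $[\ell_1,\ell_2]$. Equality holds exactly at the endpoints, which correspond precisely to $q_2=0$ and $q_1=0$ respectively.

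For the lower bound, I will rewrite the expression as $1-\dfrac{(\ell_1+\ell_2)\overline{\ell}}{(\overline{\ell}+\ell_1)(\overline{\ell}+\ell_2)}$ and divide the numerator and denominator of the subtracted fraction by $\overline{\ell}$, producing $1-\dfrac{\ell_1+\ell_2}{\overline{\ell}+(\ell_1+\ell_2)+\ell_1\ell_2/\overline{\ell}}$. AM-GM then gives $\overline{\ell}+\ell_1\ell_2/\overline{\ell}\ge 2\sqrt{\ell_1\ell_2}$, with equality iff $\overline{\ell}=\sqrt{\ell_1\ell_2}$. Substituting this into the denominator and using the identity $(\sqrt{\ell_1}+\sqrt{\ell_2})^2=\ell_1+\ell_2+2\sqrt{\ell_1\ell_2}$ yields the lower bound
\[ 1-\frac{\ell_1+\ell_2}{(\sqrt{\ell_1}+\sqrt{\ell_2})^2}=\frac{2\sqrt{\ell_1\ell_2}}{(\sqrt{\ell_1}+\sqrt{\ell_2})^2}. \]
The minimizer $\overline{\ell}=\sqrt{\ell_1\ell_2}$ lies strictly inside $(\ell_1,\ell_2)$ by AM-GM (since $\ell_1<\ell_2$), and the corresponding weight $q_1=(\ell_2-\sqrt{\ell_1\ell_2})/(\ell_2-\ell_1)$ lies in $(0,1)$, so the bound is attained. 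There is no substantive obstacle here: both bounds reduce cleanly to a polynomial factorization and a single AM-GM application once the closed form from \cref{ghost-m-lim} is specialized to two types.
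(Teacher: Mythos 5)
Your proof is correct, but it takes a genuinely different route from the paper. The paper keeps the objective as a function $F(q)$ of the single weight $q = q_1$, differentiates, and does a sign analysis of $F'(q)$ to locate the interior minimum at $q = \sqrt{\ell_2}/(\sqrt{\ell_1}+\sqrt{\ell_2})$ and push the maximum to the endpoints. You instead exploit the constraint $q_1 + q_2 = 1$, $q_1\ell_1 + q_2\ell_2 = \overline{\ell}$ to collapse the limit into the closed form $(\overline{\ell}^{\,2}+\ell_1\ell_2)/\bigl((\overline{\ell}+\ell_1)(\overline{\ell}+\ell_2)\bigr)$, a function of $\overline{\ell}$ alone on $[\ell_1,\ell_2]$; the upper bound then falls out of the factorization $(\overline{\ell}-\ell_1)(\overline{\ell}-\ell_2)\le 0$ and the lower bound from a single AM--GM application, with the equality cases read off directly (and your minimizing $\overline{\ell}=\sqrt{\ell_1\ell_2}$ corresponds to exactly the paper's critical $q$). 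Your argument is calculus-free and makes the appearance of $\sqrt{\ell_1\ell_2}$ and $(\sqrt{\ell_1}+\sqrt{\ell_2})^2$ less mysterious; the paper's derivative computation, on the other hand, is the template that its proof of \cref{ghost-m-boundn} generalizes via Lagrange multipliers, so the two approaches trade algebraic transparency in the two-type case against uniformity with the $n$-type induction.
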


\begin{proof}
    Fix the lengths $\ell_1 < \ell_2$, and let $F : [0,1] \rightarrow \RR$ be given as $\lim_{L \rightarrow \infty} \frac{\EE[J_L]}{L}$ when $q_1 = q$ and $q_2 = 1-q$, viz.
    \[ F(q) = \frac{q \ell_1}{\overline{\ell} + \ell_1} + \frac{(1-q) \ell_2} {\overline{\ell} + \ell_2}. \]
    We wish to bound $F$ both above and below. Taking the derivative and simplifying, we have
    \[ F'(q) = (\ell_1 + \ell_2) \left( \frac{\ell_1}{(\ell_1 + \overline{\ell})^2} - \frac{\ell_2}{(\ell_2 + \overline{\ell})^2} \right). \]
    When $ q < \frac{\sqrt{\ell_2}}{\sqrt{\ell_1} + \sqrt{\ell_2}} $, then $\overline{\ell} > \sqrt{\ell_1 \ell_2}$, and it can be shown that $F'(q) < 0$. Meanwhile, when $q > \frac{\sqrt{\ell_2}}{\sqrt{\ell_1} + \sqrt{\ell_2}}$, then $\overline{\ell} < \sqrt{\ell_1 \ell_2}$ and $F'(q) > 0$. Thus, $F$ must have a global minimum at $q = \frac{\sqrt{\ell_2}}{\sqrt{\ell_1} + \sqrt{\ell_2}}$, where $F (q) = \frac{2 \sqrt{\ell_1 \ell_2} }{(\sqrt{\ell_1} + \sqrt{\ell_2})^2 }$ and $\overline{\ell} = \sqrt{\ell_1 \ell_2}$. Moreover, the global maximum must lie at either $q=0$ or $q=1$. Upon inspection, both yield a maximum of $F(q) = \frac{1}{2}$. This proves the lemma.
\end{proof}

We now extend our work with the process $\mcMG ((\ell_1,q_1), (\ell_2,q_2))$ to the more general process $\mcMG ((\ell_1,q_1), \ldots, (\ell_n,q_n))$.

\begin{corollary}\label{ghost-m-boundn}
    In $\mcMG ((\ell_1,q_1), \cdots, (\ell_n,q_n))$, the jamming length satisfies
    \[ \frac{2 \sqrt{\ell_1 \ell_n} }{(\sqrt{\ell_1} + \sqrt{\ell_n})^2 } \leq \lim_{L \rightarrow \infty } \frac{\EE[ J_{L} ]}{L} \leq \frac{1}{2}. \] 
\end{corollary}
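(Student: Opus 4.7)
The plan is to leverage the explicit formula from \cref{ghost-m-lim},
\[ F(q_1,\ldots,q_n) := \lim_{L \to \infty} \frac{\EE[J_L]}{L} = \sum_{k=1}^n \frac{q_k \ell_k}{\overline{\ell} + \ell_k}, \]
viewed as a function of the probability vector $(q_1,\ldots,q_n)$ with $\ell_1 < \cdots < \ell_n$ fixed, and then bound it above and below by elementary means.

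For the upper bound, I would rewrite $\ell_k/(\overline{\ell}+\ell_k) = 1 - \overline{\ell}/(\overline{\ell}+\ell_k)$ to obtain
\[ F = 1 - \overline{\ell} \sum_{k=1}^n \frac{q_k}{\overline{\ell}+\ell_k}, \]
and then apply the Cauchy--Schwarz inequality in the form
\[ 1 = \left(\sum_{k=1}^n q_k\right)^2 \leq \left(\sum_{k=1}^n q_k(\overline{\ell}+\ell_k)\right)\left(\sum_{k=1}^n \frac{q_k}{\overline{\ell}+\ell_k}\right) = 2\overline{\ell}\sum_{k=1}^n \frac{q_k}{\overline{\ell}+\ell_k}, \]
so that $\sum_k q_k/(\overline{\ell}+\ell_k) \geq 1/(2\overline{\ell})$ and hence $F \leq 1 - 1/2 = 1/2$.

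For the lower bound, I would fix $\overline{\ell} = m$ momentarily and exploit the concavity of $g(\ell) := \ell/(m+\ell)$ on $[\ell_1, \ell_n]$ (immediate from $g''(\ell) = -2m/(m+\ell)^3 < 0$). Concavity yields the chord inequality
\[ g(\ell_k) \geq \frac{\ell_n - \ell_k}{\ell_n-\ell_1}\, g(\ell_1) + \frac{\ell_k - \ell_1}{\ell_n - \ell_1}\, g(\ell_n) \quad \text{for every } k, \]
and averaging against the distribution $(q_k)$ while using $\sum q_k \ell_k = m$ gives
\[ F = \sum_{k=1}^n q_k g(\ell_k) \geq \frac{\ell_n - m}{\ell_n - \ell_1}\cdot\frac{\ell_1}{m+\ell_1} + \frac{m-\ell_1}{\ell_n-\ell_1}\cdot\frac{\ell_n}{m+\ell_n}. \]
The right-hand side is precisely the value of $F$ for the two-length process $\mcMG((\ell_1, q_1'), (\ell_n, q_n'))$ with $q_1' = (\ell_n-m)/(\ell_n-\ell_1)$ and $q_n' = (m-\ell_1)/(\ell_n-\ell_1)$, which keeps the average segment length equal to $m$. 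Applying \cref{ghost-m-bound2} to this two-length process then yields
\[ F \geq \frac{2\sqrt{\ell_1\ell_n}}{(\sqrt{\ell_1}+\sqrt{\ell_n})^2}, \]
as desired.

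I expect the main subtlety to be the reduction step: since $\overline{\ell}$ itself depends on the probabilities $(q_k)$, one must be careful that the chord inequality is applied after fixing $m = \overline{\ell}$ (i.e. using that $g$ depends on the same $m$ appearing in the definition of $F$). Once this is in place, the two-length bound of \cref{ghost-m-bound2} transfers directly because both sides of the inequality share the same average segment length $m$, so no quantity changes when we pass from the $n$-length distribution to the extremal two-length distribution on $\{\ell_1,\ell_n\}$ with that mean.
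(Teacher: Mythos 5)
Your proposal is correct, and it takes a genuinely different route from the paper. The paper proves \cref{ghost-m-boundn} by induction on $n$: it uses Lagrange multipliers to show $F$ has no critical points in the interior of the probability simplex (the stationarity conditions force $\ell_1 = \cdots = \ell_n$), so the extrema lie on a face where some $q_i = 0$, and the claim reduces to the inductive hypothesis with base case \cref{ghost-m-bound2}. Your argument replaces all of this with two elementary inequalities. For the upper bound, the Cauchy--Schwarz step is clean and checks out: $\sum_k q_k(\overline{\ell}+\ell_k) = 2\overline{\ell}$, so $\sum_k q_k/(\overline{\ell}+\ell_k) \geq 1/(2\overline{\ell})$ and $F \leq 1/2$, with no induction needed. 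For the lower bound, the concavity of $g(\ell) = \ell/(m+\ell)$ with $m = \overline{\ell}$ held fixed gives the chord inequality, and averaging against $(q_k)$ produces exactly the two-length value at the \emph{same} mean $m$ (since $q_1'\ell_1 + q_n'\ell_n = m$), so \cref{ghost-m-bound2} applies directly; you correctly flag that matching the mean is the one point that needs care, and your construction of $q_1', q_n'$ handles it (note also that $\ell_1 \leq m \leq \ell_n$ guarantees $q_1', q_n' \geq 0$). What your approach buys is the elimination of the multivariable optimization and the induction: only the single-variable analysis of \cref{ghost-m-bound2} survives as an external input. What the paper's approach buys is slightly more structural information, namely that the extrema of $F$ always occur on the boundary of the simplex, which identifies \emph{where} the bounds are attained rather than just that they hold.
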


\begin{proof}
    Fix the lengths $\ell_1 < \cdots < \ell_n$. We wish to bound the function $F : \RR^n \rightarrow \RR$, given by 
    \[ F(q_1, \ldots, q_n) =  \lim_{L \rightarrow \infty} \frac{\EE[J_L]}{L} = \sum_{k=1}^n \frac{q_k \ell_k}{\overline{\ell} + \ell_k}. \]
    Let $\mcP$ be the region $ \{ (q_1, \ldots, q_n) \in \RR^n : q_1 + \cdots + q_n = 1, q_1, \ldots, q_n \geq 0 \} .$ We only consider $F$ in this region $\mcP$, and we will induct on the value of $n$, where the base case with $n=2$ is proved in \cref{ghost-m-bound2} (and the case where $n=1$ is trivial).
    
    Assume now that \cref{ghost-m-boundn} holds for $n = M-1$. We will show it holds for $n = M$, where $M \geq 3$. We first investigate relative extrema within the region $R$ using Lagrange multipliers. If such an extrema existed, we must have $\nabla F = c \langle 1, \cdots, 1 \rangle$ for some real $c$, or equivalently, $\dddf{F}{q_1} = \cdots = \dddf{F}{q_M} .$ 

    Fix arbitrary $i, j \in [M]$. Then, $\dddf{F}{q_i} = \dddf{F}{q_j}$ becomes
    \[ \ell_i \left( \frac{1}{\overline{\ell} + \ell_i} - \sum_{k \in [M]} \frac{q_k \ell_k }{ (\overline{\ell} + \ell_k)^2} \right) = \ell_j \left( \frac{1}{\overline{\ell} + \ell_j} - \sum_{k \in [M]} \frac{q_k \ell_k }{ (\overline{\ell} + \ell_k)^2} \right),\] 
    which simplifies to
    \[ \frac{\overline{\ell}}{(\overline{\ell} + \ell_i)(\overline{\ell} + \ell_j)} = \sum_{k \in [M]} \frac{q_k \ell_k }{ (\overline{\ell} + \ell_k)^2}. \]
    The right hand side is a constant expression not depending on $i$ and $j$. Thus, $\frac{\overline{\ell}}{(\overline{\ell} + \ell_i)(\overline{\ell} + \ell_j)}$ too must be constant for any choice of $i$ and $j$, which only can occur when all the lengths are equal, viz. $\ell_1 = \cdots = \ell_M$. This cannot happen, as our lengths are all distinct. Thus, $F$ has no relative extrema in the interior of the region $\mcP$.

    Both the minimum and maximum of $F$ must then lie on the boundary of $R$, so fix an arbitrary $q_i = 0$. This now reduces our problem to the case with $n = M-1$, and by the inductive hypothesis, the maximum is always $\frac{1}{2}$.
    
    Meanwhile, if $i = 1$, the minimum is $\frac{2 \sqrt{\ell_2 \ell_M} }{(\sqrt{\ell_2} + \sqrt{\ell_M})^2 }$. If $i = M$, the minimum is $\frac{2 \sqrt{\ell_1 \ell_{M-1}} }{(\sqrt{\ell_1} + \sqrt{\ell_{M-1}})^2 }$. Otherwise, the minimum is $\frac{2 \sqrt{\ell_1 \ell_M} }{(\sqrt{\ell_1} + \sqrt{\ell_M})^2 }$. By writing  $\frac{2 \sqrt{\ell_1 \ell_{M}} }{(\sqrt{\ell_1} + \sqrt{\ell_{M}})^2 }$ as $\frac{1}{2} - \left(\frac{\sqrt{\ell_1} - \sqrt{\ell_M}}{\sqrt{\ell_1} + \sqrt{\ell_M}} \right)^2$ and the others analogously, it is straightforward to show that $\frac{2 \sqrt{\ell_1 \ell_M} }{(\sqrt{\ell_1} + \sqrt{\ell_M})^2 }$ is the least of the three minima, which completes the proof.
\end{proof}

\begin{remark}
    In the multidisperse ghost process, we are able to completely bound the jamming density $\lim_{L \rightarrow \infty} \frac{\EE[J_L]}{L}$. We are generally unable to do this in the multidisperse process, which admits a much more complex analysis.
\end{remark}

\section{The Multidisperse RSA Process}\label{multi}

In this section, we study multidisperse RSA processes that we formally define below.
\begin{definition}[Multidisperse Process]\label{multidisperse-defn}
    Fix segment length $1 = \ell_1 < \cdots < \ell_n$ and probabilities $q_1, \ldots, q_n \in \RR^{> 0}$ (with $\sum_{i \in [n]} q_i = 1$). The \textit{$((\ell_1,q_1), \cdots, (\ell_n,q_n))$-multidisperse process}, abbreviated as $\mcM( (\ell_1,q_1), \cdots, (\ell_n,q_n) )$ is defined to be the following random process:

    Initialize:
    \begin{itemize}
        \item $I_0 = [0, L],$ the empty region not occupied by parked segments,
        \item $P_0 = \emptyset $, the set of parked segments.
    \end{itemize}
    
    Then, for $T = 1, 2, \ldots:$
    \begin{enumerate}
        \item Sample left endpoint $b \xleftarrow{R} [0,L]$, and segment type $i \in [n]$ with $\P(i = k) = q_k$. We say that segment $(b,b+\ell_i)$ is a \emph{candidate segment} of \emph{type} $i$.
        \item If $(b,b+\ell_i) \subseteq I_{T-1}$, let $I_T = I_{T-1} \setminus (b,b+\ell_i)$ and $P_T = P_{T-1} \cup \{ (b,b+\ell_i) \}.$ We say the segment $(b,b+\ell_i)$ has been \emph{parked}. Otherwise, let $I_T = I_{T-1}$ and $P_T = P_{T-1}$, and we say that the segment $(b,b+\ell_i)$ has been \emph{rejected}.
    \end{enumerate}

    We define the \emph{type $k$ saturation number} to be the total number of type $k$ segments parked at saturation:
    \[ N_{k,L} := \lim_{T\rightarrow \infty} | \{ A \in P_T : \lambda(A) = \ell_k\} |,\]
    where $\lambda( A )$ denotes the Lesbegue measure of $A$.
\end{definition}

Like in the multidisperse ghost process, for the remainder of the section, the $\ell_i$'s will always be positive reals representing the segment lengths, and the $q_i$'s will always be positive reals summing to $1$ that represent the probabilities of choosing each segment. The shortest segment length, $\ell_1$, is always implicitly taken to be $1$. Moreover, we will let $\ol \ell$ be the \textit{average segment length}, so that $$\overline{\ell} := \sum_{i \in [n]} q_i \ell_i.$$  

We will study the expected number of type $k$ segments placed. We begin with the following integral recurrence formula:

\begin{proposition}\label{multi-recurrence}
    In $\mathcal{M} ((\ell_1,q_1), \cdots, (\ell_n,q_n))$, for $L \geq \ell_n$,
    \[ \EE[N_{k,L}] = \frac{1}{L-\overline{\ell}}\left( q_k(L-\ell_k) + 2\sum_{i=1}^n q_i\int_0^{L-\ell_i} \EE[N_{k,s}] \df{s} \right) . \]
\end{proposition}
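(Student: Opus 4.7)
The plan is to condition on the first \emph{successfully parked} segment (as opposed to the first candidate, which might be rejected). Rejected attempts leave the state of the process unchanged, so conditioning on the first accepted segment yields a clean Markovian decomposition: after a type-$i$ segment with left endpoint $b$ is parked, the process restricts independently to the two empty subintervals $[0,b]$ and $[b+\ell_i, L]$, each of which behaves as a fresh copy of the multidisperse process on an interval of the respective length.

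First, I would compute the joint density of $(i,b)$ for the first parked segment. In a single round, the candidate is type $i$ with left endpoint $b \in [0,L-\ell_i]$ with density $q_i/L$ (in $b$), so the one-round acceptance probability is $\tfrac{1}{L}\sum_{i=1}^n q_i(L-\ell_i) = (L-\overline{\ell})/L$. Since $L \geq \ell_n$, every type has positive chance of fitting, so an accepted segment appears almost surely. A geometric-trials argument shows that the index $T^\star$ of the first acceptance is geometric with success probability $(L-\overline{\ell})/L$, independent of the (type, position) of that first accepted segment, whose density is
\[ p(i,b) \;=\; \frac{q_i \, \mathbf{1}[b \in [0, L-\ell_i]]}{L - \overline{\ell}}. \]

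Second, given $(i,b)$, the Markov property and independence of the two resulting sub-processes give
\[ \EE[N_{k,L} \mid i, b] \;=\; \mathbf{1}[i=k] + \EE[N_{k,b}] + \EE[N_{k, L-b-\ell_i}]. \]
Integrating against $p(i,b)$ and using linearity of expectation gives
\[ \EE[N_{k,L}] = \frac{1}{L-\overline{\ell}}\left( q_k(L-\ell_k) + \sum_{i=1}^n q_i \int_0^{L-\ell_i} \bigl( \EE[N_{k,b}] + \EE[N_{k,L-b-\ell_i}] \bigr) \df{b} \right). \]
The substitution $u = L-b-\ell_i$ converts the second inner integral into $\int_0^{L-\ell_i} \EE[N_{k,u}] \df{u}$, matching the first and producing the factor of $2$ in the claimed formula.

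The only genuine obstacle is justifying the reduction from the full rejection-sampling process to a single draw from $p(i,b)$; this is precisely the geometric-trials observation above. One should also remark that $L \geq \ell_n \geq \overline{\ell}$ ensures $L - \overline{\ell} > 0$, so the prefactor is well defined. Everything else is a routine change of variables and an application of linearity.
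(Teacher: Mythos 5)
Your proposal is correct and follows essentially the same route as the paper: both condition on the type and position of the first successfully parked segment (your density $p(i,b)$ integrates in $b$ to the paper's $\PP(A_i) = \tfrac{q_i(L-\ell_i)}{L-\overline{\ell}}$, with $b$ uniform on $[0,L-\ell_i]$ given the type), then use independence of the two resulting subintervals and the substitution $u = L-b-\ell_i$ to produce the factor of $2$. Your explicit geometric-trials justification of the first-acceptance distribution is a slightly more careful rendering of the paper's renormalization step, but the argument is the same.
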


\begin{proof}
Consider parking segments on interval $[0, L]$ with $L \ge \ell_n$. At time $T = 1$, note that we succeed in parking a segment of type $i$ with probability $q_i \cdot \frac{L - \ell_i}{L}$, as we must first choose a type $i$ segment and then successfully park it by choosing its left endpoint to be in $[0,L-\ell_i]$, which occurs with probability $\frac{L-\ell_i}{L}$. These two events are independent.

Let $A_i$ be the event that the first segment eventually parked is type $i$. If no segment is parked on the first attempt, we repeatedly make new, independent attempts, so $\P(A_i) \propto q_i \cdot \frac{L-\ell_i}{L}.$ Since $\sum_{i \in [n]} \P(A_i) = 1$, by renormalizing the probabilities we see that $\PP(A_i) = \frac{q_i (L-\ell_i)}{L-\overline{\ell}}. $

Conditioned on $A_i$, let the position of the first segment's left endpoint be $s$. Note that $s$ is a random variable uniformly distributed on $[0,L-\ell_i]$. After placing the length $\ell_i$ segment down on $[0,L]$ at left endpoint $s$, $[0, L]$ is then broken into two subintervals of length $s$ and $L-\ell_i-s$, on which the same multidisperse process continues. The expected numbers of type $k$ intervals placed on the two subintervals are then $\EE[N_{k,s}]$ and $\EE[N_{k,L-\ell_i-s}]$, respectively. Thus, integrating over the random variable $s$, we have that when $i \neq k$,
    \[ \EE[N_{k,L} \mid A_i] = \frac{1}{L-\ell_i} \int_0^{L-\ell_i} ( \EE[N_{k,s}] +\EE[N_{k,L-\ell_i-s}] ) \df{s} = \frac{2}{L-\ell_i} \int_0^{L-\ell_i} \EE[N_{k,s}] \df{s}.\]
    When $i = k$, we have just placed a type $k$ segment and must accordingly add $1$ to $\EE[N_{k,L}]$:
    \[ \EE[N_{k,L} \mid A_k] = 1 + \frac{2}{L-\ell_k} \int_0^{L-\ell_k} \EE[N_{k,s}] \df{s}. \]
    Combining these gives
    \[ \EE[N_{k,L}] = \sum_{i = 1}^n \PP[A_i] \EE[N_{k,L} \mid A_i] = \frac{1}{L-\overline{\ell}}\left( q_k(L-\ell_k) + 2\sum_{i=1}^n q_i\int_0^{L-\ell_i} \EE[N_{k,s}] \df{s} \right) . \] 
\end{proof}

Using this recurrence, we will use Laplace transforms to derive precise asymptotics for $\EE[N_{k,L}]$ as $L \rightarrow \infty$. We will make use of $\Ein: \RR \rightarrow \RR$ to denote the modified exponential integral function, viz.
\begin{equation}\label{ein-defn}
    \Ein(z) = \int_0^z \frac{1-e^{-t}}{t} \df{t}.
\end{equation}

\begin{remark}\label{bidisperse-remark}
    The \emph{bidisperse process}, i.e. the multidisperse process with $n=2$ different segment lengths, has been investigated before by various authors. In particular, Subashiev and Luryi derive in \cite{subashiev2007} an exact expression for $ \lim_{L \rightarrow \infty} \frac{\EE[N_{k,L}]}{L} $ in the bidisperse process. Our work here extends their work to the multidisperse process, and by choosing $n=2$, one recovers their formula.
\end{remark}

\begin{theorem}\label{multi-constant}
    Consider $\mcM((\ell_1,q_1), \cdots, (\ell_n,q_n))$, and fix $k \in [n]$. Define functions $P_{i;k} : \RR^{>0} \rightarrow \RR$ as
    \begin{equation}\label{multi-Pik-defn}
        P_{i;k}(s) := \int_{\ell_n - \ell_i}^{\ell_n} \EE[N_{k,L}] e^{-sL} \df{L},
    \end{equation}
    and define $G_k : \RR^{>0} \rightarrow \RR$ as
    \begin{equation}\label{multi-Gk-defn}
        G_k(s) := e^{- (\ell_n - \overline{\ell}) s} \Big( q_k + s (\ell_n - \overline{\ell})  \EE[N_{k,\ell_n}] \Big) + 2 s e^{\overline{\ell} s} \sum_{i = 1}^n q_i e^{-\ell_i s} P_{i;k} (s).
    \end{equation}
    Then,
    \begin{equation}\label{multi-constant-eqn}
        \lim_{L \rightarrow \infty } \frac{\EE[ N_{k,L} ]}{L} = \int_0^\infty G_k(t) \exp\left(-2 \sum_{i=1}^n q_i \Ein (\ell_i t)\right) \df{t}.
    \end{equation}
\end{theorem}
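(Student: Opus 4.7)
The plan is to pass the integral recurrence of \cref{multi-recurrence} through the Laplace transform, derive a first-order linear ODE for $F_k(s) := \Lap\{\EE[N_{k,\cdot}]\}(s)$, solve it with an explicit integrating factor built from $\Ein$, and read off the limit from the $s\to 0^+$ behavior. First, I would differentiate the recurrence with respect to $L$ to obtain the cleaner differential--delay form
\[ (L-\overline{\ell}) f_k'(L) + f_k(L) \;=\; q_k + 2\sum_{i=1}^n q_i f_k(L-\ell_i), \qquad L \geq \ell_n, \]
where $f_k(L) := \EE[N_{k,L}]$. Then I would multiply by $e^{-sL}$ and integrate over $[\ell_n,\infty)$, using integration by parts on $(L-\overline{\ell}) f_k'(L)$, the identity $\Lap\{L f_k\} = -F_k'$, and the shift $\int_{\ell_n}^\infty f_k(L-\ell_i) e^{-sL}\df{L} = e^{-\ell_i s}\bigl[F_k(s) - \int_0^{\ell_n-\ell_i} f_k(u) e^{-su}\df{u}\bigr]$. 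This produces a first-order linear ODE of the form
\[ F_k'(s) + \Big[\overline{\ell} + \tfrac{2}{s}\sum_{i=1}^n q_i e^{-\ell_i s}\Big] F_k(s) \;=\; \psi_k(s), \]
where $\psi_k(s)$ collects explicit boundary contributions from the initial data of $f_k$ on $[0,\ell_n]$, namely $\EE[N_{k,\ell_n}]$ together with the integrals $\int_0^{\ell_n-\ell_i} f_k(u) e^{-su}\df{u}$ and $\int_0^{\ell_n}(L-\overline{\ell}) f_k(L) e^{-sL}\df{L}$.

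Next I would build the integrating factor. The identity $\int \frac{e^{-\ell_i s}}{s}\df{s} = \log s - \Ein(\ell_i s) + c$, immediate from \cref{ein-defn}, together with $\sum_i q_i = 1$, yields
\[ \mu(s) \;=\; s^2 e^{\overline{\ell} s}\exp\!\Big(-2\sum_{i=1}^n q_i \Ein(\ell_i s)\Big), \]
so that $(\mu F_k)'(s) = \mu(s)\psi_k(s)$. Because $\mu(s)/s^2 \to 1$ as $s \to 0^+$, the limit $\lim_{s\to 0^+}\mu(s)F_k(s) = \lim_{s\to 0^+} s^2 F_k(s)$ exists, and by the Hardy--Littlewood Tauberian theorem (\cref{Tauberian} with $\beta = 2$) applied to the monotone function $f_k$, this limit is precisely $\lim_{L\to\infty}\EE[N_{k,L}]/L$. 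Integrating the identity $(\mu F_k)' = \mu\psi_k$ and analyzing the contribution at $\infty$ (which ultimately decays thanks to the factor $e^{-(\ell_n-\overline{\ell})t}$ that emerges after the next step) yields a representation $\lim_{L\to\infty}\EE[N_{k,L}]/L = -\int_0^\infty \mu(t)\psi_k(t)\df{t}$.

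The final step is to rewrite $-\mu(t)\psi_k(t)$ in the form $G_k(t)\exp(-2\sum_i q_i \Ein(\ell_i t))$ to match \cref{multi-Gk-defn}. The key algebraic maneuver is to invoke the recurrence at $L = \ell_n$,
\[ (\ell_n - \overline{\ell})\EE[N_{k,\ell_n}] \;=\; q_k(\ell_n - \ell_k) + 2\sum_{i=1}^n q_i \int_0^{\ell_n - \ell_i} f_k(u)\df{u}, \]
to eliminate $q_k(\ell_n - \ell_k)$ in favor of $\EE[N_{k,\ell_n}]$, and to decompose $\int_0^{\ell_n-\ell_i} = \int_0^{\ell_n} - \int_{\ell_n-\ell_i}^{\ell_n}$ so that the remaining initial-data integrals reassemble into the $P_{i;k}(s)$ of \cref{multi-Pik-defn}. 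After these substitutions one obtains \cref{multi-constant-eqn} directly. I expect the main obstacle to be precisely this bookkeeping: the raw integrating-factor computation produces a tangle of boundary contributions, and collapsing them into the compact $G_k$ of \cref{multi-Gk-defn} requires both the boundary identity at $L = \ell_n$ and the careful splitting of the initial-data integrals in terms of $P_{i;k}$. A secondary technical point is verifying the limit at $s\to\infty$ when integrating the ODE, which after the simplification is controlled by the exponential decay rate $e^{-(\ell_n-\overline{\ell}) t}$ of the integrand.
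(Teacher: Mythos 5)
Your route is essentially the paper's: differentiate the recurrence of \cref{multi-recurrence}, take Laplace transforms to get a first-order linear ODE whose homogeneous coefficient is $\overline{\ell} + \tfrac{2}{s}\sum_i q_i e^{-\ell_i s}$, solve it with the integrating factor built from $\Ein$ (the paper writes it as $\iota(s)$ acting on $w(s) = e^{\overline{\ell}s}\varphi(s)$, where $\varphi$ is the transform truncated to $[\ell_n,\infty)$ rather than your full $F_k$ --- the difference is only which explicit initial-data integrals land in the forcing term), obtain $s^2 F_k(s) \to \alpha_k$, and finish with \cref{Tauberian}. Your identification of the boundary bookkeeping (the recurrence at $L=\ell_n$ plus the splitting $\int_{\ell_n-\ell_i}^{\infty} = P_{i;k} + \varphi$) as the main labor is accurate, and your integrating factor is the correct one.

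There is one genuine misstep at the very end. You invoke the Hardy--Littlewood theorem ``applied to the monotone function $f_k$'' to conclude $\lim_{L\to\infty}\EE[N_{k,L}]/L = \alpha_k$ directly. But $\EE[N_{k,L}]$ is \emph{not} monotone in $L$: for instance $\EE[N_{1,L}] = 1$ on $[\ell_1,\ell_2)$ and then drops strictly below $1$ once $L \geq \ell_2$ (a type-$2$ segment may be the one parked), as is visible in \cref{multi-example-fig}. The Tauberian theorem as stated only yields $\int_0^L \EE[N_{k,t}]\df{t} \sim \tfrac{\alpha_k}{2}L^2$, and without a monotonicity or slow-oscillation hypothesis you cannot differentiate this asymptotic. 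The paper closes the gap by substituting the integral asymptotic back into the recurrence of \cref{multi-recurrence}, whose right-hand side is $\tfrac{1}{L-\overline{\ell}}\bigl(q_k(L-\ell_k) + 2\sum_i q_i \int_0^{L-\ell_i}\EE[N_{k,t}]\df{t}\bigr)$ and hence is controlled entirely by the integrals; you should do the same. The rest of your outline is sound.
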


\begin{remark}
    The functions $P_{i;k}$ arise from the behavior of $ N_{k,L} $ when $L \leq \ell_n$, which isn't governed by the integral recurrence in \cref{multi-recurrence}.
\end{remark}

\begin{example}
    Consider $\EE[N_{1,L}]$ in the multidisperse process $\mcM ( (1,\frac{1}{2}), (\frac{13}{10}, \frac{3}{10}), (\frac{3}{2}, \frac{1}{5}) )$, which has 3 possible segment lengths of $1$, $\frac{13}{10}$, and $\frac{3}{2}$. Note that because $\EE[N_{1,L}] = 0$ when $L < 1$, each $P_{i;1}$ is equal to $\int_1^{3/2} \EE[N_{1,L}]e^{-sL} \df{L}$ in this case. To compute $P_{i;1}$, we compute that $\EE[N_{1,L}]$ is $1$ when $1 \leq L < \frac{13}{10}$ and $ \frac{\frac{1}{2} \left(L-1\right)}{\frac{1}{2}(L-1) + \frac{3}{10}\left(L-\frac{13}{10}\right) } $ when $\frac{13}{10} \leq L < \frac{15}{10}$. Then, noting $\overline{\ell} = \frac{31}{100}$, we have
    \[ G_1 (s) := e^{-31s/100} \left(\frac{1}{2} + \frac{1}{4} s\right) + 2 s \left( \frac{1}{2} e^{19s/100} + .3 e^{-11s/100} + .2 e^{-31s/100} \right) \int_1^{3/2} \EE[N_{1,L}]e^{-sL} \df{L}, \]
    and numerically integrating \cref{multi-constant-eqn} yields
    \[ \lim_{L \rightarrow \infty} \frac{\EE[ N_{1,L} ]}{L} = \int_0^\infty G_1(t) \exp\left( -2 \left( \frac{1}{2} \Ein(t) + \frac{3}{10} \Ein \left(\frac{13}{10}t\right) + \frac{1}{5} \Ein\left(\frac{3}{2} t\right) \right) \right) \df{t} \approx .4204. \]
    That is, $\EE[N_{1,L}] \sim .4204 L$. We may similarly compute that $\EE[N_{2,L}] \sim .1655L$ and $\EE[N_{3,L}] \sim 0.0949 L$ (c.f. \cref{multi-example-fig}). With these values, we see that the total length covered by all segments grows asymptotically equal to $.7778 L$.
\end{example}

\begin{figure}
    \centering
    \begin{subfigure}{.3\textwidth}
        \begin{tikzpicture}[scale=1]
            \begin{axis} [axis lines = left,
                xlabel = \(L\),
                ylabel = {$\EE[N_{1,L}]$},
                width=.9\textwidth,
                height=.9\textwidth,
                ymin=0,
                xmin=0,
                xmax=10,
                ymax=5
            ]
            \addplot [color=blue, thick] coordinates { 
                (1,1)(1.1,1)(1.2,1)(1.3,1)(1.4,0.869565)(1.5,0.806452)(1.6,0.731707)(1.7,0.686275)(1.8,0.655738)(1.9,0.633803)(2,0.617284)(2.1,0.703297)(2.2,0.792079)(2.3,0.846847)(2.4,0.939848)(2.5,1.01605)(2.6,1.1021)(2.7,1.1834)(2.8,1.22876)(2.9,1.27268)(3,1.30662)(3.1,1.33575)(3.2,1.36779)(3.3,1.39273)(3.4,1.42194)(3.5,1.4546)(3.6,1.49199)(3.7,1.53108)(3.8,1.57049)(3.9,1.61358)(4,1.65816)(4.1,1.703)(4.2,1.75042)(4.3,1.79246)(4.4,1.84477)(4.5,1.8794)(4.6,1.9176)(4.7,1.96787)(4.8,2.00106)(4.9,2.04233)(5,2.08591)(5.1,2.12466)(5.2,2.16985)(5.3,2.20727)(5.4,2.24875)(5.5,2.29038)(5.6,2.33209)(5.7,2.37632)(5.8,2.41575)(5.9,2.4576)(6,2.4995)(6.1,2.54135)(6.2,2.58563)(6.3,2.62501)(6.4,2.66678)(6.5,2.70856)(6.6,2.7503)(6.7,2.79448)(6.8,2.83375)(6.9,2.87547)(7,2.91719)(7.1,2.9589)(7.2,3.00308)(7.3,3.04235)(7.4,3.08408)(7.5,3.12582)(7.6,3.16755)(7.7,3.21175)(7.8,3.25102)(7.9,3.29276)(8,3.3345)(8.1,3.37623)(8.2,3.41797)(8.3,3.45971)(8.4,3.50551)(8.5,3.54318)(8.6,3.58491)(8.7,3.62665)(8.8,3.66839)(8.9,3.71419)(9,3.75186)(9.1,3.79361)(9.2,3.83535)(9.3,3.87711)(9.4,3.92295)(9.5,3.96311)(9.6,4.00487)(9.7,4.04417)(9.8,4.08594)(9.9,4.13018)(10,4.17195)
            };
            \end{axis}
        \end{tikzpicture}
    \end{subfigure}%
    \begin{subfigure}{.3\textwidth}
        \begin{tikzpicture}[scale=1]
            \begin{axis} [axis lines = left,
                xlabel = \(L\),
                ylabel = {$\EE[N_{2,L}]$},
                width=.9\textwidth,
                height=.9\textwidth,
                ymin=0,
                xmin=0,
                xmax=10,
                ymax=5
            ]
            \addplot [color=blue, thick] coordinates { 
                (0,0)(0.1,0)(0.2,0)(0.3,0)(0.4,0)(0.5,0)(0.6,0)(0.7,0)(0.8,0)(0.9,0)(1,0)(1.1,0)(1.2,0)(1.3,0)(1.4,0.130435)(1.5,0.193548)(1.6,0.219512)(1.7,0.235294)(1.8,0.245902)(1.9,0.253521)(2,0.259259)(2.1,0.263736)(2.2,0.267327)(2.3,0.27027)(2.4,0.278334)(2.5,0.292349)(2.6,0.307532)(2.7,0.32685)(2.8,0.3442)(2.9,0.365037)(3,0.38671)(3.1,0.407728)(3.2,0.428767)(3.3,0.446075)(3.4,0.463433)(3.5,0.480024)(3.6,0.49667)(3.7,0.512166)(3.8,0.526866)(3.9,0.542191)(4,0.557734)(4.1,0.573496)(4.2,0.590223)(4.3,0.605766)(4.4,0.624867)(4.5,0.638818)(4.6,0.653831)(4.7,0.673295)(4.8,0.687287)(4.9,0.703983)(5,0.721446)(5.1,0.737245)(5.2,0.755134)(5.3,0.770343)(5.4,0.786839)(5.5,0.803331)(5.6,0.819802)(5.7,0.837105)(5.8,0.852765)(5.9,0.869242)(6,0.885743)(6.1,0.902237)(6.2,0.91958)(6.3,0.935248)(6.4,0.951749)(6.5,0.968258)(6.6,0.984759)(6.7,1.00212)(6.8,1.01776)(6.9,1.03426)(7,1.05076)(7.1,1.06725)(7.2,1.08462)(7.3,1.10024)(7.4,1.11674)(7.5,1.13324)(7.6,1.14973)(7.7,1.1671)(7.8,1.18272)(7.9,1.19922)(8,1.21572)(8.1,1.23221)(8.2,1.24871)(8.3,1.26521)(8.4,1.28317)(8.5,1.2982)(8.6,1.3147)(8.7,1.3312)(8.8,1.34769)(8.9,1.36567)(9,1.38069)(9.1,1.39719)(9.2,1.41369)(9.3,1.43019)(9.4,1.44819)(9.5,1.46411)(9.6,1.48062)(9.7,1.49622)(9.8,1.51273)(9.9,1.53015)(10,1.54666)
            };
            \end{axis}
        \end{tikzpicture}
    \end{subfigure}%
    \begin{subfigure}{.3\textwidth}
        \begin{tikzpicture}[scale=1]
            \begin{axis} [axis lines = left,
                xlabel = \(L\),
                ylabel = {$\EE[N_{3,L}]$},
                width=.9\textwidth,
                height=.9\textwidth,
                ymin=0,
                xmin=0,
                xmax=10,
                ymax=5
            ]
            \addplot [color=blue, thick] coordinates { 
                (0,0)(0.1,0)(0.2,0)(0.3,0)(0.4,0)(0.5,0)(0.6,0)(0.7,0)(0.8,0)(0.9,0)(1,0)(1.1,0)(1.2,0)(1.3,0)(1.4,0)(1.5,0)(1.6,0.0487805)(1.7,0.0784314)(1.8,0.0983607)(1.9,0.112676)(2,0.123457)(2.1,0.131868)(2.2,0.138614)(2.3,0.144144)(2.4,0.14876)(2.5,0.152672)(2.6,0.157744)(2.7,0.165247)(2.8,0.172383)(2.9,0.180997)(3,0.190648)(3.1,0.201079)(3.2,0.212631)(3.3,0.223272)(3.4,0.2342)(3.5,0.244784)(3.6,0.255334)(3.7,0.265197)(3.8,0.274431)(3.9,0.283768)(4,0.292972)(4.1,0.302084)(4.2,0.311549)(4.3,0.320347)(4.4,0.330889)(4.5,0.338852)(4.6,0.347272)(4.7,0.358088)(4.8,0.366165)(4.9,0.375671)(5,0.385613)(5.1,0.394723)(5.2,0.404937)(5.3,0.413765)(5.4,0.423265)(5.5,0.432759)(5.6,0.442236)(5.7,0.452144)(5.8,0.461175)(5.9,0.470632)(6,0.480096)(6.1,0.489552)(6.2,0.49946)(6.3,0.508473)(6.4,0.517931)(6.5,0.527394)(6.6,0.536854)(6.7,0.546777)(6.8,0.555778)(6.9,0.565238)(7,0.574701)(7.1,0.584161)(7.2,0.594093)(7.3,0.603085)(7.4,0.612546)(7.5,0.622008)(7.6,0.631468)(7.7,0.641408)(7.8,0.650391)(7.9,0.659852)(8,0.669315)(8.1,0.678775)(8.2,0.688236)(8.3,0.697698)(8.4,0.70796)(8.5,0.716621)(8.6,0.726082)(8.7,0.735543)(8.8,0.745005)(8.9,0.755277)(9,0.763929)(9.1,0.773392)(9.2,0.782856)(9.3,0.792321)(9.4,0.802607)(9.5,0.811754)(9.6,0.821221)(9.7,0.83019)(9.8,0.839658)(9.9,0.849627)(10,0.859097)
            };
            \end{axis}
        \end{tikzpicture}
    \end{subfigure}
    \caption{Plots of $\EE[N_{k,L}]$ in $\mcM ( (1,.5), (1.3, .3), (1.5, .2) )$.}
    \label{multi-example-fig}
\end{figure}

\begin{proof}[Proof of \cref{multi-constant}]
    Fix $k \in [n]$. We first derive a formula for the Laplace transform of $\EE[N_{k,L}]$ and then determine the behavior of this Laplace transform around $0$. Finally, we apply~\cref{Tauberian} to determine the behavior of $\EE[N_{k,L}]$ as $L \rightarrow \infty$.

Define 
    \begin{equation}
        \varphi(s) := \int_{\ell_n}^\infty \EE[N_{k,L}] e^{-sL} \df{L}.
    \end{equation}
    Note $\varphi(s)$ is not exactly the Laplace transform of $\EE[N_{k,L}]$. Notably, the lower limit of $L$ is $\ell_n$ rather than $0$, as the multidisperse process has a fundamentally different behavior for small $L$ (segments of length $\ell_n$ are never parked). 
    
    For brevity, define constants
    \begin{equation}\label{multi-rho-defn}
        \rho := \ell_n - \overline{\ell} \qquad \text{and} \qquad \rho_i := \ell_n - \ell_i
    \end{equation}
    for $i \in [n]$. The $\rho_i$'s are the differences in length between the largest segment and the other segments, and $\rho$ is the difference between the largest segment length and the mean. Because $\ell_n$ is the largest segment length, $\rho$ and $\rho_i$ are all nonnegative.

    We now proceed to formulate a differential equation for $\varphi(s)$:
    \begin{lemma}\label{multi-laplace-diffeq}
        Let $w(s) = e^{\overline{\ell} s} \varphi(s)$. Then, with $G_k(s)$ as defined in \cref{multi-constant},
        \[
            w'(s) + \frac{2 w(s)}{s} \sum_{i=1}^n q_i e^{-\ell_i s} + \frac{G_k(s)}{s^2} = 0.
        \]
    \end{lemma}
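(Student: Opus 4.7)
The plan is to convert the integral recurrence from \cref{multi-recurrence} into a first-order linear ODE for $\varphi(s)$, and then change variables. Starting from the equivalent recurrence
\[ (L - \overline{\ell})\,\EE[N_{k,L}] = q_k(L - \ell_k) + 2\sum_{i=1}^n q_i \int_0^{L - \ell_i} \EE[N_{k,u}]\df{u}, \]
valid for $L \geq \ell_n$ by \cref{multi-recurrence}, I would multiply through by $e^{-sL}$ and integrate over $L \in [\ell_n, \infty)$. The trivial bound $\EE[N_{k,L}] \leq L$ ensures all integrals converge and differentiation under the integral sign is justified for $s > 0$.

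For the left-hand side, differentiating $\varphi$ under the integral sign yields $-\varphi'(s) - \overline{\ell}\varphi(s)$. For the first term on the right, $q_k(L - \ell_k)e^{-sL}$ integrates directly over $[\ell_n, \infty)$ to produce $q_k e^{-\ell_n s}(\rho_k/s + 1/s^2)$. For each convolution-type term $\int_{\ell_n}^\infty e^{-sL}\int_0^{L - \ell_i}\EE[N_{k,u}]\df{u}\,\df{L}$, I would substitute $M = L - \ell_i$ to pull out $e^{-\ell_i s}$, then integrate by parts with $U = \int_0^M \EE[N_{k,u}]\df{u}$ and $dV = e^{-sM}\df{M}$. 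This produces a boundary contribution $\frac{e^{-\rho_i s}}{s}\int_0^{\rho_i}\EE[N_{k,u}]\df{u}$ at $M = \rho_i$, together with a remainder $\frac{1}{s}\int_{\rho_i}^\infty \EE[N_{k,M}] e^{-sM}\df{M}$. Splitting this last integral at $M = \ell_n$ produces precisely $P_{i;k}(s)/s$ on $[\rho_i, \ell_n]$ and $\varphi(s)/s$ on $[\ell_n, \infty)$.

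Collecting everything, the equation takes the form $-\varphi'(s) - \overline{\ell}\varphi(s) = \frac{2\varphi(s)}{s}\sum_i q_i e^{-\ell_i s} + R(s)$, where $R(s)$ collects the $\varphi$-free boundary terms. The substitution $w(s) = e^{\overline{\ell} s}\varphi(s)$ turns $\varphi'(s) + \overline{\ell}\varphi(s)$ into $e^{-\overline{\ell} s}w'(s)$, so multiplying through by $e^{\overline{\ell} s}$ and then $s^2$ reduces the claim to the algebraic identity
\[ q_k\rho_k + 2\sum_{i=1}^n q_i\int_0^{\rho_i}\EE[N_{k,u}]\df{u} = \rho\,\EE[N_{k,\ell_n}]. \]
This is exactly \cref{multi-recurrence} evaluated at the boundary $L = \ell_n$, so it holds for free.

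The main obstacle is the bookkeeping: each partial Laplace transform of a convolution term must be carefully split at $\ell_n$ (which is why both $P_{i;k}$ and $\varphi$ appear together inside $G_k$), and the several boundary constants $q_k\rho_k$ and $\{\int_0^{\rho_i}\EE[N_{k,u}]\df{u}\}_{i \in [n]}$ must be recognized as combining to exactly $\rho\,\EE[N_{k,\ell_n}]$ through the boundary case of the recurrence. Once this coincidence is observed, everything else reduces to routine Laplace-transform manipulations.
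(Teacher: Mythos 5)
Your proof is correct, and it reaches the same ODE by a slightly different route than the paper. The paper first shifts the recurrence to $L+\ell_n$, multiplies by $L+\rho$, and \emph{differentiates in $L$} before taking the Laplace transform; the differentiation collapses the convolution integrals to point evaluations $\EE[N_{k,L+\rho_i}]$, and the single boundary constant $\rho\,\EE[N_{k,\ell_n}]$ then falls out in one piece from the integration by parts used to transform the $L$-derivative. You instead transform the undifferentiated recurrence on $[\ell_n,\infty)$ directly and integrate by parts term by term, which scatters the boundary data into $q_k\rho_k$ and the constants $\int_0^{\rho_i}\EE[N_{k,u}]\df{u}$; you then correctly observe that these recombine into $\rho\,\EE[N_{k,\ell_n}]$ precisely because \cref{multi-recurrence} holds at $L=\ell_n$ (the proposition is stated for $L\ge\ell_n$, so this boundary instance is available). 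I verified the individual computations: the partial transform of $(L-\overline{\ell})\EE[N_{k,L}]$ is indeed $-\varphi'(s)-\overline{\ell}\varphi(s)$, the first term transforms to $q_ke^{-\ell_n s}(\rho_k/s+1/s^2)$, the split of $\int_{\rho_i}^\infty$ at $\ell_n$ produces exactly $P_{i;k}(s)+\varphi(s)$, and after multiplying by $e^{\overline{\ell}s}s^2$ the residual identity is exactly the $L=\ell_n$ case of the recurrence. The trade-off is minor: the paper's differentiate-first organization avoids the need to recognize that the boundary constants recombine, while your version avoids the intermediate pointwise identity $\ddf{L}[(L+\rho)\EE[N_{k,L+\ell_n}]]=q_k+2\sum_i q_i\EE[N_{k,L+\rho_i}]$ (and hence any differentiability concerns for $\EE[N_{k,L}]$), at the cost of heavier bookkeeping.
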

    \begin{proof}
        By \cref{multi-recurrence} with $L+\ell_n$ as the interval length, we have \linechecktext{\[ \EE[N_{k,L+\ell_n}] = \frac{1}{L+\ell_n-\overline{\ell}}\left( q_k(L+\ell_n-\ell_k) + 2\sum_{i=1}^n q_i\int_0^{L+\ell_n-\ell_i} \EE[N_{k,t}] \df{t} \right). \]}
        \[ \EE[N_{k,L+\ell_n}] = \frac{1}{L+\rho}\left( q_k(L+\rho_k) + 2\sum_{i=1}^n q_i\int_0^{L+\rho_i} \EE[N_{k,t}] \df{t} \right). \]

        This equation holds for all $L \geq 0$. \linechecktext{Multiply both sides by $L+\rho$:
        \[ (L + \rho) \EE[N_{k,L+\ell_n}] = q_k(L + \rho_k) + 2 \sum_{i = 1}^n q_i \int_0^{L + \rho_i} \EE[N_{k,t}] \df{t}. \]} Rearrange and differentiate with respect to $L$ to get
        \[ \ddf{L} \Big[ (L + \rho) \EE[N_{k,L+\ell_n}] \Big] = q_k + 2\sum_{i=1}^n q_i \EE[N_{k,L+\rho_i}],\]
        which has Laplace transform
        \begin{equation}\label{multi-laplace}
            \int_0^\infty \ddf{L} \Big[ (L + \rho) \EE[N_{k,L + \ell_n}] \Big] e^{-sL} \df{L} = q_k \int_0^\infty e^{-sL} \df{L} + 2\sum_{i=1}^n q_i \int_0^\infty  \EE[N_{k,L+\rho_i}] e^{-sL} \df{L}.
        \end{equation}
        We integrate the right and left sides separately. We first observe that
        \begin{equation}\label{multi-delay-laplace-sub}
            \int_0^\infty \EE[N_{k,L+\ell_n}] e^{-sL} \df{L} = e^{\ell_n s} \int_0^\infty \EE[N_{k,L+\ell_n}] e^{-s(L+\ell_n)} \df{L} = e^{\ell_n s} \varphi(s).
        \end{equation}
        Then, by integration by parts, the left side of \cref{multi-laplace} is equal to       
        \begin{align}
            \int_0^\infty \ddf{L} &\Big[ (L + \rho) \EE[N_{k,L+\ell_n}] \Big] e^{-sL} \df{L} \\
            &= s \int_0^\infty (L+\rho) \EE[N_{k,L+\ell_n}] e^{-sL} \df{L} - \rho \EE[N_{k,\ell_n}] \nonumber\\
            \linechecktext{&= s L \int_0^\infty \EE[N_{k,L+\ell_n}] e^{-sL} \df{L} + s \rho \int_0^\infty  \EE[N_{k,L+\ell_n}] e^{-sL} \df{L} - \rho  \EE[N_{k,\ell_n}] \nonumber\\}
            &= -s \cdot \ddf{s} \left[ \int_0^\infty  \EE[N_{k,L+\ell_n}] e^{-sL} \df{L} \right] + s \rho \int_0^\infty  \EE[N_{k,L+\ell_n}] e^{-sL} \df{L} - \rho  \EE[N_{k,\ell_n}] \nonumber\\
            &= -s \cdot \ddf{s} [e^{\ell_n s} \varphi(s) ] + s \rho e^{\ell_n s} \varphi(s) - \rho  \EE[N_{k,\ell_n}] \qquad\qquad \tag{\cref{multi-delay-laplace-sub}}\nonumber\\
            &= -s e^{\ell_n s} ( \varphi'(s) + \overline{\ell} \varphi(s) ) - \rho  \EE[N_{k,\ell_n}]. \label{multi-lap-LHS}
        \end{align}
To evaluate the right hand side of~\cref{multi-laplace}, first observe that
        \begin{align*}
            \int_0^\infty  \EE[N_{k,L+\rho_i}] e^{-sL} \df{L} &= e^{\rho_i s} \int_0^\infty  \EE[N_{k,L+\rho_i}] e^{-s(L+\rho_i)} \df{L}\\ 
            &= e^{\rho_i s} \left( \int_{\rho_i}^{\ell_n}  \EE[N_{k,L}] e^{-sL} \df{L} + \int_{\ell_n}^\infty  \EE[N_{k,L}] e^{-sL} \df{L}  \right) \\
            &= e^{\rho_i s} \left( P_{i;k} (s) + \varphi(s) \right),
        \end{align*}
        where $P_{i;k} (s)$ is as defined in \cref{multi-Pik-defn}. The right side of \cref{multi-laplace} then simplifies to
        \begin{equation}\label{multi-lap-RHS}
            \frac{q_k}{s} + 2 \sum_{i=1}^n q_i \cdot e^{\rho_i s} (P_{i;k}(s) + \varphi(s)) .
        \end{equation}
        Equating \cref{multi-lap-LHS} and \cref{multi-lap-RHS} yields
        \[ -s e^{\ell_n s} ( \varphi'(s) + \overline{\ell} \varphi(s) ) - \rho  \EE[N_{k,\ell_n}] = \frac{q_k}{s} + 2 \sum_{i=1}^n q_i \cdot e^{\rho_i s} (P_{i;k}(s) + \varphi(s)) . \]
        Rearranging and multiplying the equation by $\frac{e^{-\rho s}}{s}$ yields
        \[ e^{\overline{\ell} s} (\varphi'(s) + \overline{\ell} \varphi(s)) + \frac{2e^{\overline{\ell} s} \varphi(s)}{s} \sum_{i = 1}^n q_i e^{-\ell_i s} + \frac{e^{-\rho s}}{s^2} \left( q_k + s \rho  \EE[N_{k,\ell_n}] \right) + \frac{2e^{\overline{\ell} s}}{s} \sum_{i = 1}^n q_i e^{-\ell_i s} P_{i:k}(s) = 0. \]
        Consider $G_k(s)$ as in \cref{multi-Gk-defn}, and note that $\frac{G_k(s)}{s^2}$ is exactly the constant term in the above first order differential equation for $\varphi(s)$. That is,
        \[ e^{\overline{\ell} s} (\varphi'(s) + \overline{\ell} \varphi(s)) + \frac{2e^{\overline{\ell} s} \varphi(s)}{s} \sum_{i = 1}^n q_i e^{-\ell_i s} + \frac{G_k(s)}{s^2} = 0. \]
        Finally, substituting in $w(s) = e^{\overline{\ell} s} \varphi(s)$ yields the desired result.
    \end{proof}

    We will solve the above differential equation for $w(s)$. To ensure our calculations are well defined, the integral in \cref{multi-constant-eqn} must actually exist. Verifying this is a routine calculus exercise, which we omit for brevity. 
    \begin{obs}\label{multi-existence-of-constant}
        The following integral is finite:
        \[ \alpha_k := \int_0^\infty G_k(t) \exp\left(-2 \sum_{i=1}^n q_i \Ein (\ell_i t)\right) \df{t}. \]
    \end{obs}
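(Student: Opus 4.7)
The plan is to split $\alpha_k = \int_0^\infty I_k(t)\,dt$, where $I_k(t) := G_k(t)\exp\bigl(-2\sum_{i=1}^n q_i \Ein(\ell_i t)\bigr)$, and control $I_k$ separately near $t = 0$ and near $t = \infty$.

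Near $t = 0$, I would show $I_k$ extends continuously. Each $\Ein(\ell_i t) \to 0$ as $t \to 0^+$, so the exponential factor tends to $1$; each $P_{i;k}(s)$ defined in \eqref{multi-Pik-defn} is continuous at $s = 0$ as the Laplace integral of the bounded function $\EE[N_{k,L}]$ over the compact interval $[\ell_n - \ell_i, \ell_n]$. Substituting into \eqref{multi-Gk-defn} shows $G_k$ is continuous at $0$ with $G_k(0) = q_k$, hence $I_k(0) = q_k$. In particular, $I_k$ is bounded on any compact interval $[0, T]$, so $\int_0^1 I_k < \infty$.

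For the tail, I would combine two asymptotic ingredients. The first is the classical expansion $\Ein(z) = \log z + \gamma + O(z^{-1})$ as $z \to \infty$, obtainable from $\Ein(z) = \int_0^1 \frac{1-e^{-u}}{u}\,du + \log z - \int_1^z \frac{e^{-u}}{u}\,du$ by sending $z \to \infty$. Since $\sum_i q_i = 1$, this yields
\[ \sum_{i=1}^n q_i \Ein(\ell_i t) = \log t + \gamma + \sum_{i=1}^n q_i \log \ell_i + O(t^{-1}), \]
so $\exp\bigl(-2\sum_i q_i \Ein(\ell_i t)\bigr) = \Theta(t^{-2})$ as $t \to \infty$. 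The second is the bound $|P_{i;k}(s)| \le C_i e^{-s(\ell_n - \ell_i)}$ with $C_i := \int_{\ell_n - \ell_i}^{\ell_n} \EE[N_{k,L}]\,dL$, coming from the trivial estimate $e^{-sL} \le e^{-s(\ell_n - \ell_i)}$ on the integration range. Combining this with the prefactor $2s e^{(\ol\ell - \ell_i)s}$ in \eqref{multi-Gk-defn}, each summand in the sum contributes $O(s e^{-\rho s})$ with $\rho := \ell_n - \ol\ell$; together with the first summand $e^{-\rho s}(q_k + s \rho \EE[N_{k,\ell_n}])$ this gives $G_k(t) = O((1+t) e^{-\rho t})$. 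When $n \ge 2$, $\rho > 0$ so $I_k$ decays exponentially at infinity; when $n = 1$, $P_{1;1} \equiv 0$ and $G_k \equiv q_k$, and the $\Theta(t^{-2})$ decay from the exponential factor alone already ensures $\int_1^\infty I_k < \infty$.

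The main (and mild) obstacle is the asymptotic $\Ein(z) = \log z + \gamma + O(z^{-1})$. One can bypass identifying $\gamma$ explicitly by proving only the weaker lower bound $\sum_i q_i \Ein(\ell_i t) \ge \log t - C$ for all sufficiently large $t$, which already yields the $O(t^{-2})$ decay of the exponential factor and hence integrability.
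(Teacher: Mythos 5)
Your argument is correct. The paper itself gives no proof of this observation --- it is dismissed as ``a routine calculus exercise, which we omit for brevity'' --- so there is no authorial argument to compare against; your write-up supplies exactly the missing verification. The two-regime structure is the natural one: continuity of $G_k$ and the exponential factor at $t=0$ (with $G_k(0)=q_k$), and at infinity the combination of $\Ein(z) = \log z + O(1)$ forcing the exponential factor to be $\Theta(t^{-2})$ with the bound $G_k(t) = O\bigl((1+t)e^{-\rho t}\bigr)$ coming from $|P_{i;k}(s)| \le C_i e^{-s(\ell_n-\ell_i)}$. You are also right to flag the edge case $n=1$, where $\rho = \ell_n - \ol\ell = 0$ and the exponential decay of $G_k$ disappears, but $P_{1;1}\equiv 0$ (since $\EE[N_{1,L}]=0$ for $L<1$) leaves $G_k \equiv q_k$ and the $t^{-2}$ decay alone suffices; for $n \ge 2$ one has $\rho>0$ because all $q_i$ are strictly positive and $\ell_i < \ell_n$ for $i<n$. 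No gaps.
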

    We proceed to analyze the behavior of the Laplace transform of $\EE[N_{k,L}]$ around $s = 0$:
    \begin{lemma}
        As $s \rightarrow 0^+$, we have that
        \[ \int_0^\infty \EE[N_{k,L}] e^{-sL} \df{L} \sim \frac{\alpha_k}{s^2}. \]
    \end{lemma}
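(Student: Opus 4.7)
The plan is to solve the first-order linear ODE for $w(s)$ from \cref{multi-laplace-diffeq} via an integrating factor, integrate from $s$ out to $\infty$ to obtain a closed form for $w(s)$, and then take $s \to 0^+$. The key trick for the integrating factor is the identity $\ddf{s} \Ein(\ell_i s) = (1-e^{-\ell_i s})/s$, which together with $\sum_i q_i = 1$ rewrites the coefficient of $w(s)$ in the ODE as
\[ \frac{2}{s} \sum_{i=1}^n q_i e^{-\ell_i s} = \frac{2}{s} - \ddf{s}\!\left( 2 \sum_{i=1}^n q_i \Ein(\ell_i s) \right). \]
Writing $h(s) := \exp\!\left( -2 \sum_{i=1}^n q_i \Ein(\ell_i s) \right)$, the integrating factor comes out to $\mu(s) = s^2 h(s)$, and the ODE collapses to the tidy form $(\mu w)'(s) = -h(s) G_k(s)$.

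Next, I would integrate this identity from $s$ to $M$ and let $M \to \infty$; the constant of integration is pinned down by checking that $\mu(M) w(M) \to 0$. For the $w$-factor, the crude bound $\EE[N_{k,L}] \le L$ gives $\varphi(M) = O(e^{-M\ell_n}/M)$, and since $\overline{\ell} \le \ell_n$ we get $w(M) = e^{\overline{\ell} M} \varphi(M) \to 0$. For the $\mu$-factor, the asymptotic $\Ein(z) \sim \log z + \gamma$ at infinity forces $h(M) = \Theta(M^{-2})$, so $\mu(M) = M^2 h(M)$ is bounded. Putting this together yields
\[ w(s) = \frac{1}{s^2 h(s)} \int_s^\infty h(t) G_k(t) \df{t}. \]

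Taking $s \to 0^+$ is then immediate: since $\Ein(0) = 0$ we have $h(s) \to 1$, and by \cref{multi-existence-of-constant} the integral $\int_s^\infty h(t) G_k(t) \df{t} \to \alpha_k$, so $w(s) \sim \alpha_k/s^2$ and hence $\varphi(s) = e^{-\overline{\ell} s} w(s) \sim \alpha_k/s^2$ as well. To convert this into the claim about the full Laplace transform, I would split
\[ \int_0^\infty \EE[N_{k,L}] e^{-sL} \df{L} = \int_0^{\ell_n} \EE[N_{k,L}] e^{-sL} \df{L} + \varphi(s); \]
the first piece is $O(1)$ as $s \to 0^+$ (bounded integrand on a bounded interval) and so is swamped by $\varphi(s) \sim \alpha_k/s^2$. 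The main obstacle is checking the vanishing boundary term $\mu(M) w(M) \to 0$, which rests on the mildly delicate interplay between the exponential-type decay of $\varphi$ at infinity and the logarithmic growth of $\Ein$ controlling $h$; once those two asymptotics are in hand, the rest is routine.
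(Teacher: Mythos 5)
Your proposal is correct and follows essentially the same route as the paper: both solve the ODE from \cref{multi-laplace-diffeq} by an integrating factor (your explicit $s^2 e^{-2\sum_i q_i \Ein(\ell_i s)}$ is exactly the paper's $\iota$ after it simplifies $\iota(t)/\iota(s)$), pin the constant of integration by the vanishing of $w$ at infinity, and then let $s \to 0^+$ to identify $\alpha_k$, finishing by noting the contribution from $L \in [0,\ell_n]$ is $O(1)$.
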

    \begin{proof}
    Let $w(s) = e^{\overline{\ell} s} \varphi(s)$, as in    ~\cref{multi-laplace-diffeq}. The maximum number of type $k$ segments we can place on an interval of length $L$ is $\frac{L}{\ell_k}$, so $ \EE[N_{k,L}] \leq \frac{L}{\ell_k}$, and
        \[ w(s) = e^{\overline{\ell} s} \varphi(s) \leq e^{\overline{\ell} s} \int_{\ell_n}^\infty \frac{L}{\ell_k} \cdot e^{-sL} \df{L} = \frac{e^{-\rho s}}{\ell_k} \left(\frac{\ell_n}{s} + \frac{1}{s^2}\right).\] 
        This implies the initial condition $\lim_{s \rightarrow \infty} w(s) = 0.$

        By \cref{multi-laplace-diffeq}, $w(s)$ satisfies the differential equation $$ w'(s) + \frac{2 w(s)}{s} \sum_{i=1}^n q_i e^{-\ell_i s} + \frac{G_k(s)}{s^2} = 0.$$

        \linechecktext{
        We now solve for $w(s)$ using the method of integrating factors. Define the function $\iota(s)$ as
        \[ \iota(s) := \exp \left( 2 \int_\infty^s \frac{1}{u} \sum_{i=1}^n q_i e^{-\ell_i u} \df{u} \right) \qquad \text{so that} \qquad \frac{\iota'(s)}{\iota(s)} = \ddf{s} \ln \iota (s) = \frac{2}{s} \sum_{i=1}^n q_i e^{-\ell_i s}.\]
        Then,
        \[ \ddf{s} ( w(s) \iota(s) ) = \iota(s) \left( w'(s) + \frac{\iota'(s)}{\iota(s)} w(s) \right) = \iota(s) \left( w'(s) + \frac{2 w(s)}{s} \sum_{i=1}^n q_i e^{-\ell_i s} \right).\]
        Multiplying \cref{multi-laplace-diffeq-2} by $\iota(s)$ and simplifying yields
        \[ \ddf{s} ( w(s) \iota(s) ) = - \frac{\iota(s) G_k(s)}{s^2}, \]
        and we may integrate this expression (with a lower bound at positive $\infty$) to solve for $w(s)$:
        \begin{equation}\label{multi-w-sol}
            w(s) \iota(s) = - \int_\infty^s \frac{\iota(t) G_k(t)}{t^2}  \df{t} + C, \qquad w(s) = \int_s^\infty \frac{1}{t^2} \cdot \frac{\iota(t)}{\iota(s)} G_k(t) \df{t} + \frac{C}{\iota(s)}.
        \end{equation}
        Upon inspection, the limit $\lim_{s \rightarrow \infty} \iota(s) = 1$, so to satisfy our initial condition, $C = 0$. Moreover,
        \begin{align*}
            \frac{\iota(t)}{\iota(s)} &= \exp\left(2 \int_s^t \frac{1}{u} \sum_{i=1}^n q_i e^{-\ell_i u} \df{u} \right) = \exp \left( 2 \int_s^t \frac{1}{u} \df{u} -2 \sum_{i=1}^n q_i \int_s^t  \frac{1 - e^{-\ell_i u}}{u} \df{u} \right) \\
            &=\frac{t^2}{s^2} \exp\left(-2 \sum_{i=1}^n q_i \int_s^t  \frac{1 - e^{-\ell_i u}}{u} \df{u}\right).
        \end{align*}
        Substituting this into \cref{multi-w-sol} yields
        \begin{align*}
            w(s) 
            &= \frac{1}{s^2} \int_s^\infty G_k(t) \exp\left(-2 \sum_{i=1}^n q_i \int_s^t \frac{1 - e^{-\ell_i u}}{u} \df{u}\right) \df{t}.
        \end{align*}}
        Using our initial condition and the method of integrating factors, we may solve this first order linear differential equation, which yields
        \[ w(s) = \frac{1}{s^2} \int_s^\infty G_k(t) \exp\left(-2 \sum_{i=1}^n q_i \int_s^t \frac{1 - e^{-\ell_i u}}{u} \df{u}\right) \df{t}. \]

        Thus, as $s \rightarrow 0^+$, by dominated convergence we have
        \[ w(s) \sim \frac{1}{s^2} \int_0^\infty G_k(t) \exp\left(-2 \sum_{i=1}^n q_i \int_0^t \frac{1 - e^{-\ell_i u}}{u} \df{u}\right) \df{t} = \frac{\alpha_k}{s^2}, \]
        with $\alpha_k$ defined as the expression in \cref{multi-existence-of-constant}. Because $\varphi(s) = e^{\ol \ell s} w(s)$, as $s\rightarrow 0^+$ we have $\varphi(s) \sim \frac{\alpha_k}{s^2}$ as well.
        
        Again applying our simple bound $0 \leq \EE[N_{k,L}] \leq \frac{L}{\ell_k},$ we see that as $s \rightarrow 0^+$, $\varphi(s) \leq \int_0^\infty \EE[N_{k,L}] e^{-sL} \df{L} \leq \frac{\ell_n^2}{2 \ell_k} + \varphi(s), $
        and so $\int_0^\infty \EE[N_{k,L}] e^{-sL} \df{L} \sim \frac{\alpha_k}{s^2}$.
    \end{proof}

    Applying~\cref{Tauberian} with $H  = \alpha_k, \beta = 2$, the above implies that 
    $$\int_0^L \EE[N_{k,t}] \df{t} \sim \frac{\alpha_k}{2} \cdot L^2. $$
    Recall that by \cref{multi-recurrence},
    \[ \EE[N_{k,L}] = \frac{1}{L-\overline{\ell}}\left( q_k(L-\ell_k) + 2\sum_{i=1}^n q_i\int_0^{L-\ell_i} \EE[N_{k,t}] \df{t} \right) . \]
    Substituting in our asymptotic for the integral of $\EE[N_{k,L}]$ into the right side of this equation completes the proof that $\EE[N_{k,L}] \sim \alpha_k L.$
\end{proof}

\begin{remark}
    We can derive the variance of $N_{k,L}$ by using a similar method to study the second moment, $\EE[N_{k,L}^2]$. Similar to \cref{multi-recurrence}, one can derive the following recurrence:
    \begin{align*}
        \EE[N_{k,L}^2] = &\frac{1}{L-\overline{\ell}} \Big( q_k (L - \ell_k) + 2 q_k \int_0^{L-\ell_k} \EE[N_{k,t}] \df{t} \\
        &+ \sum_{i = 1}^n 2q_i \int_0^{L-\ell_i} \left( \EE[N_{k,t}] \EE[N_{k,L-\ell_i-t}] + \EE[N_{k,t}^2]\right) \df{t} \Big).
    \end{align*}
    This admits an analysis by Laplace transforms, but the corresponding calculation is even more involved.
\end{remark}

\section{General Length Distributions}\label{ldf} 

In this section, we consider more general $\nu$-RSA processes (c.f.~\cref{RSA-defn}) that do not necessarily have discrete support. In particular, we study the behavior of $\EE [S_L]$ as $L \rightarrow \infty$, where the random variable $S_L$ measures the amount of empty space (not occupied by segments) at saturation. Similar to our study of multidisperse RSA, we begin by deriving a general integral recurrence formula for $\EE[S_L]$.

\begin{remark}\label{ldf-remark}
    In \cite{burridge2004recursive}, Burridge and Mao derive a similar recurrence equation for length distributions with finite support (i.e. there exists $C$ for which $\ell > C$ implies $\nu(\ell) = 0$). They then use this recurrence to study a specific case of bidisperse RSA (c.f. \cref{bidisperse-remark}). Here, we extend their recurrence to general length distributions, and we provide a proof for completeness.
\end{remark}

\begin{proposition}\label{ldf-integral-recurrence}
    Let $\nu(\ell)$ be an ldf (see~\cref{ldf-defn}). Then, in the $\nu$-RSA process, for all $L \geq 0$,
    \[ \left( \int_0^L Z_\nu(t) \df{t} \right) \EE[S_L] = 2 \int_0^{L} \EE[S_t] Z_\nu(L-t) \df{t}. \]
\end{proposition}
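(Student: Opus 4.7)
The approach mirrors the derivation of \cref{multi-recurrence}: condition on the first segment that is successfully parked, then use a memorylessness/independence argument to reduce to two smaller independent copies of the $\nu$-RSA process.

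First, I would dispose of the base case $L < 1$: both sides are zero, since $Z_\nu(t) = 0$ on $[0,1)$ forces the left-hand side to vanish, and on the right-hand side $Z_\nu(L-t) = 0$ for all $t \in [0, L]$ when $L < 1$. So assume $L \geq 1$. At time $T = 1$, we draw $\ell$ with density $\nu(\ell)/Z_\nu(L)$ and $b \xleftarrow{R} [0, L]$; the attempt succeeds iff $b \in [0, L - \ell]$, which has conditional probability $(L - \ell)/L$. The probability that a single attempt is successful is therefore
\[
\frac{1}{L Z_\nu(L)} \int_0^L \nu(\ell) (L - \ell) \df{\ell} = \frac{1}{L Z_\nu(L)} \int_0^L Z_\nu(t) \df{t},
\]
where the second equality follows by writing $L - \ell = \int_\ell^L \df{t}$ and swapping the order of integration. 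Since attempts are i.i.d.\ until one succeeds, the length $\ell$ of the first successfully parked segment has density $\nu(\ell)(L - \ell) / \int_0^L Z_\nu(t) \df{t}$ on $[0, L]$, and conditional on that length, its left endpoint $b$ is uniform on $[0, L - \ell]$.

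Next I would argue that, conditional on the first parked segment being $(b, b+\ell)$, the restrictions of the process to $[0, b]$ and to $[b + \ell, L]$ are independent $\nu$-RSA processes on intervals of those respective lengths. The point is that a future candidate $(\ell', b')$ can land entirely inside $[0, b]$ only if $\ell' \leq b$ and $b' \in [0, b - \ell']$; restricting the joint density $\nu(\ell')/(L Z_\nu(L))$ to this event and renormalizing, one sees that the conditional density of $\ell'$ is exactly $\nu(\ell')/Z_\nu(b)$ and the conditional distribution of $b'$ given $\ell'$ is uniform on $[0, b - \ell']$ — which is precisely the distribution of a $\nu$-RSA attempt on $[0, b]$. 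By induction on the number of attempts (and the same reasoning for the right subinterval), the two subprocesses are independent $\nu$-RSA processes. Therefore, conditional on the first parked segment having length $\ell$ and position $b$, we have $\EE[S_L \mid \ell, b] = \EE[S_b] + \EE[S_{L - \ell - b}]$.

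Integrating over the conditional distribution derived above gives
\[
\EE[S_L] = \frac{1}{\int_0^L Z_\nu(t) \df{t}} \int_0^L \nu(\ell) \int_0^{L - \ell} \bigl( \EE[S_b] + \EE[S_{L - \ell - b}] \bigr) \df{b} \df{\ell}.
\]
By the symmetry $b \leftrightarrow L - \ell - b$ on the inner integral this equals $\frac{2}{\int_0^L Z_\nu(t) \df{t}} \int_0^L \nu(\ell) \int_0^{L - \ell} \EE[S_b] \df{b} \df{\ell}$. Swapping the order of integration over the region $\{\ell + b \leq L,\ \ell, b \geq 0\}$ turns the inner integral in $\ell$ into $\int_0^{L - b} \nu(\ell) \df{\ell} = Z_\nu(L - b)$, yielding
\[
\left( \int_0^L Z_\nu(t) \df{t} \right) \EE[S_L] = 2 \int_0^L \EE[S_b] Z_\nu(L - b) \df{b},
\]
as required. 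The main subtlety is the independence claim in the middle paragraph: the catch is that the $\nu$-RSA process on $[0, L]$ samples lengths from $\nu/Z_\nu(L)$ rather than from $\nu/Z_\nu(b)$, so one must verify that after conditioning on landing inside a subinterval the induced length distribution is correctly $\nu/Z_\nu(b)$; everything else is a careful but routine change-of-variables calculation.
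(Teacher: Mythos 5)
Your proposal is correct and follows essentially the same route as the paper: condition on the length and position of the first successfully parked segment (whose length density is $\propto \nu(\ell)(L-\ell)$), split into two independent subprocesses, integrate, and swap the order of integration, with the base case $L<1$ handled trivially. The only differences are cosmetic — you spell out the subinterval-restriction argument that the paper delegates to the proof of \cref{multi-recurrence}, and you rewrite $\int_0^L(L-t)\nu(t)\df{t}=\int_0^L Z_\nu(t)\df{t}$ at the outset rather than via integration by parts at the end.
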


\begin{proof}
    We must have $Z_\nu (t) = 0$ for $t < 1$, so when $L \leq 1$, the proposition reduces to $0 = 0$. Thus consider interval length $L > 1$.

    Let $\mu_L(\ell)$ be the probability density function for the length of the segment that will be parked first. In the $\nu$-RSA process, segment lengths are chosen according to a distribution proportional to $\nu(\ell)$. Moreover, the segment is then parked successfully on the interval with probability $\frac{L-\ell}{L}$. Thus, $\mu_L(\ell) \propto \nu(\ell) (L-\ell)$. Normalizing this distribution then yields
    \[ \mu_L(\ell) = \frac{(L-\ell)\nu(\ell)}{\int_0^L (L-t) \nu(t) \df{t}}.\]

    We now compute $\EE[S_L]$. Given the length $\ell$ of the first segment parked, we use an identical argument to the proof of \cref{multi-recurrence} to show that $\EE[S_L] = \frac{2}{L-\ell} \int_0^{L-\ell} \EE[S_t] \df{t}. $

    Using our probability function $\mu_L(\ell)$, we now integrate over the possible values of $\ell$:
    \[ \EE[S_L] = \int_0^L \left( \frac{(L-\ell)\nu(\ell)}{\int_0^L (L-t) \nu(t) \df{t}} \cdot \frac{2}{L-\ell} \int_0^{L-\ell} \EE[S_t] \df{t} \right) \df{\ell}, \]
    which simplifies to
    \begin{equation}\label{ldf-int-rec-raw}
        \left( \int_0^L (L-t) \nu(t) \df{t} \right) \EE[S_L] = 2\int_0^L \int_0^{L-\ell} \nu(\ell) \EE[S_t] \df{t} \df{\ell}.
    \end{equation}
    Switching the order of integration on the right side yields
    \[ 2\int_0^L \int_0^{L-\ell} \nu(\ell) \EE[S_t] \df{t} \df{\ell} = 2\int_0^L \EE[S_t] \int_0^{L-t} \nu(\ell) \df{\ell} \df{t} = 2\int_0^L \EE[S_t] Z_\nu (L-t) \df{t}. \] 
    Meanwhile, by integration by parts, $\int_0^L (L-t) \nu(t) \df{t} \linechecktext{ = L Z_\nu(L) - \int_0^L t \nu(t) \df{t} } = \int_0^L Z_\nu(t) \df{t}.$ Substituting this into \cref{ldf-int-rec-raw} then proves the proposition.
\end{proof}

Using Laplace transforms with the recurrence equation in \cref{ldf-integral-recurrence}, we will later find that $\EE[S_L]$ grows linearly for a general class of convergent ldfs, whereas it grows sublinearly for a general class of divergent ldfs  (see~\cref{d:conv-div}).

\subsection{Convergent Length Distributions}\label{conv}

We first consider the $\nu$-RSA process for convergent ldfs $\nu(\ell)$. When considering such ldfs, we always assume $\int_1^\infty \nu(\ell) \df{\ell} = 1 $ without any loss of generality.

Note as $L \rightarrow \infty$, we must have $Z_{\nu}(L) \rightarrow 1$. If $Z_{\nu}(L) = 1 - o( L^{-\eps} )$ for any $\eps > 0 $, the improper integral $ \int_1^\infty \frac{1-Z_{\nu}(L)}{L} \df{L} $ is finite. Our analysis will rely on the weak condition that this integral is finite, which holds for many natural length distributions (c.f. \cref{conv-ex}).

\begin{theorem}\label{conv-thm}
    If $\nu(\ell)$ is a convergent ldf such that $ \int_1^\infty \frac{1-Z_\nu(L)}{L} \df{L} < \infty,$ then in the $\nu$-RSA process, there exists positive constant $\alpha_\nu$ such that as $L \rightarrow \infty$,
    \[ \EE[ S_L ] \sim \alpha_\nu L. \]
\end{theorem}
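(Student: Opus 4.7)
The plan is to mirror the Laplace transform approach used in the proof of \cref{multi-constant}. Writing $\Phi(s) := \Lap\{\EE[S_L]\}(s)$ and $M(s) := \Lap\{\nu\}(s)$, I will derive a first-order linear ODE for $\Phi$, solve it via an integrating factor to extract $\Phi(s)\sim \alpha_\nu/s^2$ as $s\to0^+$, and then conclude with the Hardy--Littlewood Tauberian theorem.

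Setting $A(L) := \int_0^L Z_\nu(t)\df t = L - \psi(L)$ with $\psi(L) := \int_0^L (1-Z_\nu(t))\df t$, and using $\Lap\{L \EE[S_L]\}(s) = -\Phi'(s)$, the Laplace transform of \cref{ldf-integral-recurrence} becomes
\[
\Phi'(s) + \frac{2M(s)}{s}\Phi(s) \;=\; -\Lap\{\psi(L)\EE[S_L]\}(s).
\]
Since $\frac{2M(u)}{u} = \frac{2}{u} - \frac{2(1-M(u))}{u}$, the natural integrating factor is $\iota(s) := s^2 \exp\!\bigl(-2\int_0^s \frac{1-M(u)}{u}\df u\bigr)$. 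By Fubini, $\int_0^1 \frac{1-M(u)}{u}\df u = \int_1^\infty \nu(\ell)\Ein(\ell)\df \ell$, which is finite if and only if $\int_1^\infty \nu(\ell)\ln \ell\,\df\ell < \infty$; a second Fubini computation shows this is equivalent to the hypothesis $\int_1^\infty \frac{1-Z_\nu(L)}{L}\df L < \infty$. Thus $\iota$ is well-defined on $(0,\infty)$, with $\iota(s)\sim s^2$ as $s\to 0^+$ and $\iota(s)$ bounded as $s\to\infty$ (since $\int_1^s (1-M(u))/u\df u \sim \ln s$).

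Multiplying the ODE by $\iota$ gives $(\iota\Phi)' = -\iota\cdot\Lap\{\psi\EE[S_L]\}$. Using $\EE[S_L] \le L$ (so that $\Phi(s) \le 1/s^2$) together with $M(u) \le e^{-u}$ (since $\nu$ is supported on $[1,\infty)$) to verify $\iota(s)\Phi(s)\to 0$ as $s\to\infty$, integration from $s$ to $\infty$ yields
\[
\iota(s)\Phi(s) \;=\; \int_s^\infty \iota(t)\,\Lap\{\psi(L)\EE[S_L]\}(t)\df t,
\]
whose integrand is nonnegative. Letting $s\to 0^+$ and using $\iota(s)\sim s^2$, we obtain $\Phi(s)\sim \alpha_\nu/s^2$ with $\alpha_\nu := \int_0^\infty \iota(t)\,\Lap\{\psi(L) \EE[S_L]\}(t)\df t > 0$; finiteness of this integral again uses the hypothesis (via the crude bound $\psi(L)\EE[S_L] \le L\psi(L)$, whose Laplace transform is explicitly computable in terms of $M$ and its derivative). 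Applying \cref{Tauberian} with $\beta=2$, $H=\alpha_\nu$ gives $\int_0^L \EE[S_t]\df t \sim (\alpha_\nu/2)L^2$, and substituting back into \cref{ldf-integral-recurrence} (noting $A(L)\sim L$ and $(\EE[S]*Z_\nu)(L)\sim \int_0^L \EE[S_t]\df t$ by dominated convergence, since $Z_\nu(L-t)\to 1$ pointwise and is bounded) yields the desired $\EE[S_L]\sim \alpha_\nu L$.

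The main obstacle is the self-referential source term $\Lap\{\psi\EE[S_L]\}$ in the ODE: unlike in \cref{multi-constant}, where the source depends only on finite small-$L$ data, here $\alpha_\nu$ is characterized only implicitly. This is sufficient for the qualitative statement of linear growth but obstructs writing down an explicit expression analogous to \cref{multi-constant-eqn}. A secondary subtlety is that when $\nu$ has infinite first moment (still allowed by the hypothesis), $\psi(L)$ is unbounded, so one must work slightly harder to control $\Lap\{\psi \EE[S_L]\}$; however, the integrability condition on $\nu$ is precisely what is needed to keep the integrating-factor analysis valid throughout.
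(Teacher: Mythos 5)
Your argument is correct, and while it shares the paper's overall skeleton (Laplace-transform the recurrence of \cref{ldf-integral-recurrence}, establish $\Lap\{\EE[S_L]\}(s)\sim\alpha_\nu/s^2$, invoke \cref{Tauberian}, then de-integrate), the central step is executed genuinely differently. The paper never solves an exact equation for $\varphi=\Lap\{\EE[S_L]\}$: it discards the self-referential term $\Lap\{\psi(L)\EE[S_L]\}$ in one direction and bounds it by $\Lap\{L\psi(L)\}=-(V(s)/s)'$ in the other, obtaining two differential \emph{inequalities} (\cref{conv-diffineq-lower}); it then sandwiches $\varphi$ between explicit sub- and super-solutions via the comparison principle \cref{negdiffineq} and extracts $\lim_{s\to0^+}s^2\varphi(s)$ with the $\eps$--$\delta$ device of \cref{limitfromupperlower}. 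You instead keep the source term exact, solve the linear ODE with the integrating factor $\iota(s)=s^2\exp\bigl(-2\int_0^s\frac{1-M(u)}{u}\df{u}\bigr)$ --- which, since $V(s)=\frac{1-M(s)}{s}$, is precisely $1/b(s)$ up to a constant factor --- and read off $\alpha_\nu=\int_0^\infty\iota(t)\Lap\{\psi\,\EE[S_\cdot]\}(t)\df{t}$ by monotone convergence. This buys a cleaner derivation (no need for \cref{diffineq}, \cref{negdiffineq}, or \cref{limitfromupperlower}) and a semi-explicit, if self-referential, formula for $\alpha_\nu$; the price is that finiteness of that integral must be verified separately, which needs exactly the bound $\psi(L)\EE[S_L]\le L\psi(L)$ that the paper builds into its second inequality, together with a split at $t=1$: near $0$ one uses $\iota(t)\le t^2$ and $M'\le 0$ to get $\iota(t)\Lap\{L\psi\}(t)\le 2(1-M(t))/t$, integrable precisely under the hypothesis, while near $\infty$ one uses that $\iota$ is bounded and $\Lap\{\psi\EE[S_\cdot]\}(t)\le 2/t^3$. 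Your Fubini identification of the hypothesis with $\int_1^\infty\nu(\ell)\ln\ell\df{\ell}<\infty$ and your closing step (replacing $Z_\nu(L-t)$ by $1$ at cost $L\psi(L)=o(L^2)$) both check out; the latter is the same estimate the paper carries out with its explicit constants $L_1,\dots,L_4$.
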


\begin{example}\label{conv-ex}
    In the following cases, \cref{conv-thm} applies and $\EE[S_L]$ grows linearly with $L$:
    \begin{enumerate}
        \item The ldf $\nu(\ell)$ represents a finite distribution, i.e. there exists some $C$ for which $\ell > C$ implies $\nu(\ell) = 0$.
        \item The ldf is given by $\nu(\ell) \propto \ell^p$ for $\ell > 1$ with some fixed $p < -1$ .
        \item The ldf is given by $\nu(\ell) \propto e^{-a\ell}$ for any $\ell > 1$ for fixed  $a > 0$.
    \end{enumerate}
\end{example}

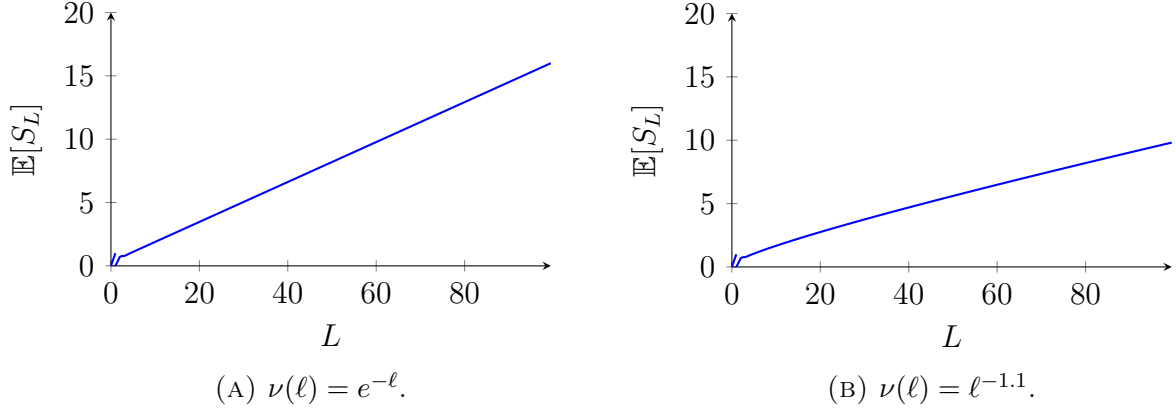
\begin{figure}
    \centering
    \begin{subfigure}{.5\textwidth}
        \begin{tikzpicture}[scale=1]
            \begin{axis} [axis lines = left,
                xlabel = \(L\),
                ylabel = {$\EE[S_L]$},
                width=.9\textwidth,
                height=.6\textwidth,
                ymin=0,
                ymax=20
            ]
            \addplot[color=blue,thick] coordinates {
                (0,0)(1,1)
            };
            \addplot [color=blue, thick] coordinates { 
                (1,0)(1.5,0.333892)(2,0.705897)(2.5,0.777425)(3,0.7786)(3.5,0.851016)(4,0.942233)(4.5,1.02581)(5,1.10486)(5.5,1.18333)(6,1.26203)(6.5,1.34084)(7,1.41967)(7.5,1.49848)(8,1.57729)(8.5,1.65608)(9,1.73487)(9.5,1.81366)(10,1.89245)(10.5,1.97124)(11,2.05003)(11.5,2.12881)(12,2.2076)(12.5,2.28639)(13,2.36517)(13.5,2.44396)(14,2.52275)(14.5,2.60153)(15,2.68032)(15.5,2.75911)(16,2.83789)(16.5,2.91668)(17,2.99547)(17.5,3.07425)(18,3.15304)(18.5,3.23183)(19,3.31061)(19.5,3.3894)(20,3.46818)(20.5,3.54697)(21,3.62576)(21.5,3.70454)(22,3.78333)(22.5,3.86212)(23,3.9409)(23.5,4.01969)(24,4.09848)(24.5,4.17726)(25,4.25605)(25.5,4.33484)(26,4.41362)(26.5,4.49241)(27,4.5712)(27.5,4.64998)(28,4.72877)(28.5,4.80755)(29,4.88634)(29.5,4.96513)(30,5.04391)(30.5,5.1227)(31,5.20149)(31.5,5.28027)(32,5.35906)(32.5,5.43785)(33,5.51663)(33.5,5.59542)(34,5.67421)(34.5,5.75299)(35,5.83178)(35.5,5.91056)(36,5.98935)(36.5,6.06814)(37,6.14692)(37.5,6.22571)(38,6.3045)(38.5,6.38328)(39,6.46207)(39.5,6.54086)(40,6.61964)(40.5,6.69843)(41,6.77722)(41.5,6.856)(42,6.93479)(42.5,7.01358)(43,7.09236)(43.5,7.17115)(44,7.24993)(44.5,7.32872)(45,7.40751)(45.5,7.48629)(46,7.56508)(46.5,7.64387)(47,7.72265)(47.5,7.80144)(48,7.88023)(48.5,7.95901)(49,8.0378)(49.5,8.11659)(50,8.19537)(50.5,8.27416)(51,8.35294)(51.5,8.43173)(52,8.51052)(52.5,8.5893)(53,8.66809)(53.5,8.74688)(54,8.82566)(54.5,8.90445)(55,8.98324)(55.5,9.06202)(56,9.14081)(56.5,9.2196)(57,9.29838)(57.5,9.37717)(58,9.45596)(58.5,9.53474)(59,9.61353)(59.5,9.69231)(60,9.7711)(60.5,9.84989)(61,9.92867)(61.5,10.0075)(62,10.0862)(62.5,10.165)(63,10.2438)(63.5,10.3226)(64,10.4014)(64.5,10.4802)(65,10.559)(65.5,10.6378)(66,10.7165)(66.5,10.7953)(67,10.8741)(67.5,10.9529)(68,11.0317)(68.5,11.1105)(69,11.1893)(69.5,11.268)(70,11.3468)(70.5,11.4256)(71,11.5044)(71.5,11.5832)(72,11.662)(72.5,11.7408)(73,11.8195)(73.5,11.8983)(74,11.9771)(74.5,12.0559)(75,12.1347)(75.5,12.2135)(76,12.2923)(76.5,12.3711)(77,12.4498)(77.5,12.5286)(78,12.6074)(78.5,12.6862)(79,12.765)(79.5,12.8438)(80,12.9226)(80.5,13.0013)(81,13.0801)(81.5,13.1589)(82,13.2377)(82.5,13.3165)(83,13.3953)(83.5,13.4741)(84,13.5529)(84.5,13.6316)(85,13.7104)(85.5,13.7892)(86,13.868)(86.5,13.9468)(87,14.0256)(87.5,14.1044)(88,14.1831)(88.5,14.2619)(89,14.3407)(89.5,14.4195)(90,14.4983)(90.5,14.5771)(91,14.6559)(91.5,14.7346)(92,14.8134)(92.5,14.8922)(93,14.971)(93.5,15.0498)(94,15.1286)(94.5,15.2074)(95,15.2862)(95.5,15.3649)(96,15.4437)(96.5,15.5225)(97,15.6013)(97.5,15.6801)(98,15.7589)(98.5,15.8377)(99,15.9164)(99.5,15.9952)
            };
            \end{axis}
        \end{tikzpicture}
        \caption{$\nu(\ell) = e^{-\ell}$.}
    \end{subfigure}%
    \begin{subfigure}{.5\textwidth}
        \begin{tikzpicture}[scale=1]
            \begin{axis} [axis lines = left,
                xlabel = \(L\),
                ylabel = {$\EE[S_L]$},
                width=.9\textwidth,
                height=.6\textwidth,
                ymin=0,
                xmin=0,
                ymax=20
            ]
            \addplot[color=blue,thick] coordinates {
                (0,0)(1,1)
            };
            \addplot [color=blue, thick] coordinates { 
                (1,0)(1.5,0.332892)(2,0.696728)(2.5,0.767484)(3,0.771534)(3.5,0.835439)(4,0.913682)(4.5,0.985535)(5,1.053)(5.5,1.11865)(6,1.18315)(6.5,1.24664)(7,1.30919)(7.5,1.37088)(8,1.43178)(8.5,1.49195)(9,1.55146)(9.5,1.61036)(10,1.66868)(10.5,1.72647)(11,1.78375)(11.5,1.84057)(12,1.89694)(12.5,1.95289)(13,2.00845)(13.5,2.06363)(14,2.11846)(14.5,2.17294)(15,2.2271)(15.5,2.28095)(16,2.3345)(16.5,2.38777)(17,2.44076)(17.5,2.4935)(18,2.54598)(18.5,2.59821)(19,2.65022)(19.5,2.702)(20,2.75355)(20.5,2.8049)(21,2.85605)(21.5,2.90699)(22,2.95775)(22.5,3.00832)(23,3.05871)(23.5,3.10892)(24,3.15896)(24.5,3.20884)(25,3.25856)(25.5,3.30812)(26,3.35753)(26.5,3.40679)(27,3.4559)(27.5,3.50488)(28,3.55371)(28.5,3.60241)(29,3.65098)(29.5,3.69942)(30,3.74774)(30.5,3.79593)(31,3.844)(31.5,3.89196)(32,3.93979)(32.5,3.98752)(33,4.03514)(33.5,4.08265)(34,4.13005)(34.5,4.17734)(35,4.22454)(35.5,4.27164)(36,4.31863)(36.5,4.36553)(37,4.41234)(37.5,4.45905)(38,4.50567)(38.5,4.55221)(39,4.59865)(39.5,4.64501)(40,4.69128)(40.5,4.73747)(41,4.78357)(41.5,4.8296)(42,4.87554)(42.5,4.9214)(43,4.96719)(43.5,5.0129)(44,5.05854)(44.5,5.1041)(45,5.14959)(45.5,5.19501)(46,5.24036)(46.5,5.28564)(47,5.33084)(47.5,5.37599)(48,5.42106)(48.5,5.46607)(49,5.51101)(49.5,5.55589)(50,5.6007)(50.5,5.64546)(51,5.69015)(51.5,5.73478)(52,5.77935)(52.5,5.82386)(53,5.86831)(53.5,5.91271)(54,5.95704)(54.5,6.00132)(55,6.04555)(55.5,6.08972)(56,6.13383)(56.5,6.1779)(57,6.2219)(57.5,6.26586)(58,6.30976)(58.5,6.35362)(59,6.39742)(59.5,6.44117)(60,6.48487)(60.5,6.52852)(61,6.57213)(61.5,6.61568)(62,6.65919)(62.5,6.70265)(63,6.74606)(63.5,6.78943)(64,6.83276)(64.5,6.87603)(65,6.91926)(65.5,6.96245)(66,7.0056)(66.5,7.0487)(67,7.09176)(67.5,7.13477)(68,7.17775)(68.5,7.22068)(69,7.26357)(69.5,7.30642)(70,7.34922)(70.5,7.39199)(71,7.43472)(71.5,7.47741)(72,7.52006)(72.5,7.56267)(73,7.60525)(73.5,7.64778)(74,7.69028)(74.5,7.73274)(75,7.77516)(75.5,7.81755)(76,7.8599)(76.5,7.90221)(77,7.94449)(77.5,7.98673)(78,8.02894)(78.5,8.07111)(79,8.11325)(79.5,8.15535)(80,8.19742)(80.5,8.23946)(81,8.28146)(81.5,8.32343)(82,8.36537)(82.5,8.40727)(83,8.44914)(83.5,8.49098)(84,8.53279)(84.5,8.57456)(85,8.6163)(85.5,8.65802)(86,8.6997)(86.5,8.74135)(87,8.78297)(87.5,8.82456)(88,8.86612)(88.5,8.90765)(89,8.94915)(89.5,8.99062)(90,9.03207)(90.5,9.07348)(91,9.11486)(91.5,9.15622)(92,9.19755)(92.5,9.23885)(93,9.28012)(93.5,9.32136)(94,9.36258)(94.5,9.40377)(95,9.44493)(95.5,9.48606)(96,9.52717)(96.5,9.56825)(97,9.60931)(97.5,9.65034)(98,9.69134)(98.5,9.73232)(99,9.77327)(99.5,9.81419)
            };
            \end{axis}
        \end{tikzpicture}
        \caption{$\nu(\ell) = \ell^{-1.1}$.}
    \end{subfigure}
    \caption{$\EE[S_L]$ grows linearly under various convergent ldfs.}
\end{figure}

\begin{proof}[Proof of \cref{conv-thm}:]
    Let $\varphi := \Lap \{ \EE[S_L] \}$ be the Laplace transform of $\EE[S_L]$ so that
    \[ \varphi(s) = \int_0^\infty \EE[S_L] e^{-sL} \df{L}. \]
    Define $V : \RR^{>0} \rightarrow \RR$ as $V := \Lap \{ 1-Z_\nu(L) \}$, and define the function $V_\star : \RR^{>0} \rightarrow \RR$ as a modified version of $V$,
    \begin{equation}\label{conv-V-defn}
        V(s) := \int_0^\infty (1-Z_\nu(L)) e^{-s L} \df{L}, \qquad V_\star(s) := \int_1^\infty (1-Z_\nu(L)) e^{-sL} \df{L}.
    \end{equation}
    Since $Z_\nu (L) = 0$ for $L < 1$, we have $V(s) = \frac{1-e^{-s}}{s} + V_\star(s).$ We will first use these functions to derive differential inequalities satisfied by $\varphi(s)$.

    \begin{lemma}\label{conv-diffineq-lower}
        The following differential inequalities hold when $s > 0$:
        \[ -\varphi'(s) > \frac{2}{s} ( e^{-s} - s V_\star(s) ) \varphi(s), \qquad -\varphi'(s) < - \left( \frac{ V(s) }{ s } \right)' + \frac{2}{s} (e^{-s} - s V_\star(s)) \varphi(s). \]
    \end{lemma}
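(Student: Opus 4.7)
My proposed proof proceeds by taking the Laplace transform of the integral recurrence from~\cref{ldf-integral-recurrence} and then splitting the multiplier $Y_L := \int_0^L Z_\nu(t)\df{t}$ as $Y_L = L - (L - Y_L)$, where $L - Y_L = \int_0^L (1 - Z_\nu(t))\df{t}$. This decomposition converts the $L \cdot \EE[S_L]$ piece into $-\varphi'(s)$ by the time-multiplication rule of~\cref{Laplace-Properties}, after which both of the claimed inequalities reduce to sign estimates on a single residual.

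To carry this out, I would first compute the Laplace transform of the right-hand side of the recurrence as $2\varphi(s)\,\Lap\{Z_\nu\}(s)$ using the convolution property. Since $Z_\nu$ vanishes on $[0,1)$, the decomposition $V(s) = (1-e^{-s})/s + V_\star(s)$ noted after~\cref{conv-V-defn} gives the clean identity $\Lap\{Z_\nu\}(s) = 1/s - V(s) = (e^{-s} - sV_\star(s))/s$. Taking Laplace transforms of $Y_L \EE[S_L] = L\EE[S_L] - (L - Y_L)\EE[S_L]$ and equating with the transform of the right-hand side then yields
\[
-\varphi'(s) \;=\; \Lap\{(L-Y_L)\EE[S_L]\}(s) \;+\; \frac{2(e^{-s} - sV_\star(s))}{s}\,\varphi(s).
\]

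With this identity in hand, the two inequalities become strict bounds on the residual $R(s) := \Lap\{(L - Y_L)\EE[S_L]\}(s)$. The lower bound $R(s) > 0$ is immediate because $L - Y_L \geq 0$ and $\EE[S_L] \geq 0$ on $[0,\infty)$, with both factors strictly positive on $(1,\infty)$. For the upper bound, I would use the trivial estimate $\EE[S_L] \leq L$ (empty space cannot exceed the interval length), which is strict for $L > 1$ since a segment is placed on the first attempt with positive probability whenever $L > 1$; this gives $R(s) < \Lap\{L(L - Y_L)\}(s)$. Applying the time-multiplication rule to $\Lap\{L - Y_L\}(s) = V(s)/s$ identifies this latter quantity as $-(V(s)/s)'$, yielding exactly the upper bound in the statement.

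The main routine bookkeeping is the Laplace-transform identity $\Lap\{Z_\nu\}(s) = (e^{-s} - sV_\star(s))/s$, and the sole probabilistic input is the strict inequality $\EE[S_L] < L$ for $L > 1$; both are short observations rather than genuine obstacles, and the whole argument collapses to a one-step manipulation in Laplace-transform space.
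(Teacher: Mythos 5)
Your proposal is correct and is essentially the paper's own argument: the paper likewise splits $\int_0^L Z_\nu(t)\df{t}$ as $L - \int_0^L(1-Z_\nu(t))\df{t}$, converts $L\,\EE[S_L]$ to $-\varphi'(s)$ via time-multiplication, handles the convolution term through $V(s)=\tfrac{1-e^{-s}}{s}+V_\star(s)$, and obtains the lower bound from $Z_\nu\le 1$ and the upper bound from $\EE[S_L]\le L$ together with $\Lap\{L\int_0^L(1-Z_\nu(t))\df{t}\}=-(V(s)/s)'$. Your packaging of both inequalities as sign estimates on the single residual $R(s)=\Lap\{(L-\int_0^L Z_\nu)\EE[S_L]\}$ is a mild streamlining of the same computation, not a different method.
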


    \begin{proof}
        By \cref{ldf-integral-recurrence},
        \begin{equation}\label{ldf-diffineq-recurrence}
            \left( \int_0^L Z_\nu(t) \df{t} \right) \EE[S_L] = 2 \int_0^{L} \EE[S_t] Z_\nu(L-t) \df{t}.
        \end{equation}
        Because $Z_\nu(t) \leq 1$, we have $\int_0^L Z_\nu (t) \df{t} < L$, and
        \begin{equation}\label{ldf-v-eq}
            L \EE[S_L] > 2 \int_0^{L} \EE[S_t] Z_\nu(L-t) \df{t} = 2 \int_0^L \EE[S_t] \df{t} - 2 \int_0^{L} \EE[S_t] (1 - Z_\nu(L-t)) \df{t},
        \end{equation}

        Taking Laplace transforms, we have that by the time-multiplication property (c.f. \cref{Laplace-Properties}), $\Lap\{ L \EE[S_L] \} = - \varphi'(s).$ Moreover, the integration property implies \\ $\Lap \left\{ 2 \int_0^L \EE[S_t] \df{t} \right\} = \frac{2\varphi(s)}{s}$, and the convolution property implies
        \[ \Lap \left\{ - 2 \int_0^{L} \EE[S_t] (1 - Z_\nu (L-t)) \df{t}\right\} = -2 \varphi(s) V(s).\] Taking the Laplace transform of \cref{ldf-v-eq} now yields $ -\varphi'(s) > \frac{2 \varphi(s)}{s} ( 1 - sV(s) ), $ and substituting $V_\star(s)$ for $V(s)$ yields the first differential inequality.

        To derive the second inequality, we reuse \cref{ldf-diffineq-recurrence}. This time, we note $\int_0^L Z_\nu (t) \df{t} = L - \int_0^L (1 - Z_\nu(t)) \df{t}$, so
        \[ L \EE[S_L] = \left( \int_0^L (1 - Z_\nu(t)) \df{t} \right) \EE[S_L] + 2 \int_0^{L} \EE[S_t] Z_\nu(L-t) \df{t}. \]
        By definition, $S_L \leq L$, so $L \EE[S_L] \leq L \int_0^L (1 - Z_\nu(t)) \df{t} + 2 \int_0^L \EE[S_t] Z_\nu(L-t) \df{t},$ or equivalently,
        \begin{equation}\label{ldf-v-eq-2}
            L \EE[S_L] \leq L \int_0^L (1 - Z_\nu(t)) \df{t} + 2 \int_0^L \EE[S_t] \df{t} - 2 \int_0^L \EE[S_t] (1 - Z_\nu(L-t)) \df{t}.
        \end{equation}
        Taking Laplace transforms, we note that
        \[ \Lap\left\{ L \int_0^L (1 - Z_\nu(t)) \df{t} \right\} = -\ddf{s} \Lap\left\{ \int_0^L (1 - Z_\nu(t)) \df{t} \right\} = - \ddf{s} \left( \frac{ V(s) }{ s } \right). \]
        Taking the Laplace transform of \cref{ldf-v-eq-2} then yields
        \[ -\varphi'(s) \leq - \left( \frac{ V(s) }{ s } \right)' +  \frac{2\varphi(s)}{s} - 2 \varphi(s) V(s), \]
        and substituting in $V_\star (s)$ as before gives the second differential inequality.
    \end{proof}

    We now find functions that use the differential inequalities to directly bound $\varphi(s)$. If we treat the first inequality in \cref{conv-diffineq-lower} as an equality, then the solution to the differential equation is the function $b : \RR^{> 0} \rightarrow \RR$, given by 
    \begin{equation}\label{conv-br-defn}
        b(s) := \frac{r(s)}{s^2}, \qquad \text{where } r(s) := \exp\left( 2 \int_1^s \frac{1-e^{-t}}{t} \df{t} + 2 \int_1^s V_\star(t) \df{t} \right).
    \end{equation}
    We will show $b(s)$ yields a lower bound on $\varphi(s)$.

    \begin{lemma}\label{conv-lap-lower}
        Fix $\eta > 0$. Then, for $s \in (0, \eta)$,
        \[ \varphi(s) \geq \frac{\varphi(\eta)}{b(\eta)} \cdot b(s).\]
    \end{lemma}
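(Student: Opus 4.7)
The plan is to exploit the fact that the function $b(s)$ is constructed precisely so that $-b'(s) = c(s)\, b(s)$ on $(0, \infty)$, where
\[ c(s) := \frac{2}{s}\bigl(e^{-s} - s V_\star(s)\bigr). \]
A quick logarithmic differentiation of $b(s) = r(s)/s^2$ verifies this: using \cref{conv-br-defn}, $(\ln r)'(s) = 2(1-e^{-s})/s + 2V_\star(s)$, so $(\ln b)'(s) = (\ln r)'(s) - 2/s = -2e^{-s}/s + 2V_\star(s) = -c(s)$. Consequently the function $v(s) := \frac{\varphi(\eta)}{b(\eta)} b(s)$ is the unique solution of the ODE $-v' = c v$ satisfying $v(\eta) = \varphi(\eta)$, and it serves as the natural candidate lower bound for $\varphi$.

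To establish $\varphi \geq v$ on $(0, \eta]$, I would consider the ratio $R(s) := \varphi(s)/b(s)$, which is well-defined and continuously differentiable on $(0, \infty)$ because $b(s) > 0$ and $\varphi$ is smooth there (as the Laplace transform of a nonnegative integrable function). A direct computation, using $b'(s) = -c(s) b(s)$, gives
\[ R'(s) = \frac{b(s)\varphi'(s) - \varphi(s) b'(s)}{b(s)^2} = \frac{\varphi'(s) + c(s)\varphi(s)}{b(s)}. \]
The first differential inequality in \cref{conv-diffineq-lower} reads $-\varphi'(s) > c(s)\varphi(s)$, equivalently $\varphi'(s) + c(s)\varphi(s) < 0$, so $R'(s) < 0$ on $(0, \eta]$. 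Hence $R$ is strictly decreasing, and for $s \in (0, \eta)$ we obtain $R(s) > R(\eta) = \varphi(\eta)/b(\eta)$, which rearranges to $\varphi(s) > \frac{\varphi(\eta)}{b(\eta)} b(s)$, even with strict inequality.

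There is really no serious obstacle here: the construction of $b$ precisely matches the associated ODE, and the ratio $\varphi/b$ converts the differential inequality into monotonicity in one line. The one small thing to verify is that we never divide by zero or take logs of zero, which is immediate since $b > 0$ on $(0, \infty)$ and $\eta > 0$ is fixed. As an alternative, one could invoke the opposite-inequality analogue of \cref{negdiffineq} mentioned in the paper (with $C \equiv 0$ and $c_0 = c$), after verifying $c(s) \geq 0$ via the crude estimate $sV_\star(s) \leq s\int_1^\infty e^{-sL}\df{L} = e^{-s}$; but the ratio argument is more direct and bypasses that verification entirely.
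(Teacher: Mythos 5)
Your proof is correct, and it reaches the conclusion by a genuinely different (and more elementary) route than the paper. Both arguments start from the same two ingredients: the first inequality of \cref{conv-diffineq-lower}, and the observation that $\varphi_\star(s) := \frac{\varphi(\eta)}{b(\eta)} b(s)$ solves the associated equality ODE $-v' = c v$ with $c(s) = \frac{2}{s}(e^{-s} - sV_\star(s))$ and matches $\varphi$ at $s=\eta$ (your logarithmic differentiation of $b = r/s^2$ checks out against \cref{conv-br-defn}). Where you diverge is the comparison step: the paper invokes its general ODE-comparison machinery, namely the opposite-inequality analogue of \cref{negdiffineq} (which in turn rests on the $n$-th order \cref{diffineq} and its mean-value-theorem argument), and this forces it to verify the hypothesis $c(s) \geq 0$ via the estimate $V_\star(s) \leq e^{-s}/s$. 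Your ratio argument $R = \varphi/b$, with $R' = (\varphi' + c\varphi)/b < 0$, is a self-contained one-line integrating-factor computation that needs only $b > 0$ and the differentiability of $\varphi$ on $(0,\infty)$ (immediate for the Laplace transform of a function bounded by $L$), and, as you note, it bypasses the sign condition on $c$ entirely. The trade-off is that the paper's comparison propositions are reused elsewhere in spirit (e.g., for the upper bound in \cref{conv-lap-upper}, where the equation has a nonzero forcing term $-(V/s)'$ and the bare ratio trick no longer applies directly), so the general machinery is not wasted; but for this particular lemma your argument is cleaner and even yields the strict inequality.
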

    \begin{proof}
        Let $\varphi_\star(s) = \frac{\varphi(\eta)}{b(\eta)} \cdot b(s)$. We may check that
        $\varphi_\star (\eta) = \varphi(\eta)$ and that $\varphi_\star$ is a solution to the differential equation
        \[ -\varphi_\star'(s) = \frac{2}{s} ( e^{-s} - s V_\star(s) ) \varphi_\star(s). \]

        By \cref{conv-diffineq-lower}, $\varphi(s)$ satisfies $-\varphi'(s) > \frac{2}{s} ( e^{-s} - s V_\star(s) ) \varphi(s),$
        and we quickly note $ V_\star (s) \leq \int_1^\infty 1 \cdot e^{-s L} \df{L} = \frac{e^{-s}}{s},$ which implies $e^{-s} - s V_\star(s) > 0$. Therefore, \cref{negdiffineq} holds here and implies that when $s \in (0,\eta)$, $\varphi(s) \geq \varphi_\star(s)$.
    \end{proof}

    Define function $h_\eta(s)$ as
    \begin{equation}\label{conv-heta-defn}
        h_\eta(s) := \int_s^\eta -\left( \frac{V(t)}{t} \right)' \cdot \frac{b(s)}{b(t)} \df{t}.
    \end{equation}
    The function $h_\eta(s)$ is the error when approximating $\varphi(s)$ with $b(s)$. By adding $h_\eta(s)$ to $b(s)$, we will derive an upper bound for $\varphi(s)$.
    
    \begin{lemma}\label{conv-lap-upper}
        Fix $\eta > 0$. Then, for $s \in (0,\eta)$,
        \[ \varphi(s) \leq \frac{\varphi(\eta)}{b(\eta)} \cdot b(s) + h_\eta(s). \]
    \end{lemma}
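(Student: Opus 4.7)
The plan is to mimic the structure of Lemma \ref{conv-lap-lower}, but now using the \emph{upper} differential inequality from Lemma \ref{conv-diffineq-lower} together with Proposition \ref{negdiffineq}. Set
\[
v_\star(s) := \frac{\varphi(\eta)}{b(\eta)} \cdot b(s) + h_\eta(s),
\]
and the goal is to check that (i) $v_\star(\eta) = \varphi(\eta)$ and (ii) $v_\star$ exactly solves the first-order ODE whose right-hand side upper bounds $-\varphi'(s)$. Then \cref{negdiffineq} with $c_0(s) = \tfrac{2}{s}(e^{-s} - sV_\star(s))$ and $C(s) = -\bigl(V(s)/s\bigr)'$ yields $\varphi(s) \le v_\star(s)$ on $(0,\eta)$.

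The boundary condition is immediate: $h_\eta(\eta) = 0$, so $v_\star(\eta) = \varphi(\eta)$. For the ODE I would write $h_\eta(s) = b(s) \int_s^\eta g(t)\,dt$ with $g(t) = -(V(t)/t)'/b(t)$, and differentiate by the product rule to get
\[
h_\eta'(s) = \frac{b'(s)}{b(s)}\, h_\eta(s) + \left(\frac{V(s)}{s}\right)'.
\]
Combining this with $\bigl(\tfrac{\varphi(\eta)}{b(\eta)} b(s)\bigr)' = \tfrac{b'(s)}{b(s)} \cdot \tfrac{\varphi(\eta)}{b(\eta)} b(s)$ gives
\[
v_\star'(s) = \frac{b'(s)}{b(s)}\, v_\star(s) + \left(\frac{V(s)}{s}\right)'.
\]
From the definition of $b$ in \cref{conv-br-defn}, a direct logarithmic differentiation shows $b'(s)/b(s) = -\tfrac{2}{s}(e^{-s} - sV_\star(s))$, so
\[
-v_\star'(s) = -\left(\frac{V(s)}{s}\right)' + \frac{2}{s}\bigl(e^{-s} - sV_\star(s)\bigr) v_\star(s).
\]

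With this ODE in hand, I would appeal to the second inequality of \cref{conv-diffineq-lower}, which says precisely
\[
-\varphi'(s) < -\left(\frac{V(s)}{s}\right)' + \frac{2}{s}\bigl(e^{-s} - sV_\star(s)\bigr) \varphi(s).
\]
Nonnegativity of the coefficient $c_0(s) = \tfrac{2}{s}(e^{-s} - sV_\star(s))$ on $(0,\eta]$ was already observed in the proof of \cref{conv-lap-lower} (using $V_\star(s) \le e^{-s}/s$), so all hypotheses of \cref{negdiffineq} hold on the interval $(0,\eta]$ with common endpoint $\eta$. Applying it yields $\varphi(s) \le v_\star(s)$ on $(0,\eta)$, as desired.

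The one technical point worth checking carefully, rather than a genuine obstacle, is the differentiation of $h_\eta$: one must justify interchanging the derivative with the integral, which is routine since $b$ is smooth and $-(V(t)/t)'$ is integrable near $t = s$. Otherwise the argument is a parallel of \cref{conv-lap-lower}, with the added inhomogeneous term $(V(s)/s)'$ absorbed by the variation-of-parameters choice of $h_\eta$.
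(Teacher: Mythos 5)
Your proposal is correct and follows essentially the same route as the paper: define $\varphi^\star(s) = \frac{\varphi(\eta)}{b(\eta)} b(s) + h_\eta(s)$, check the boundary condition via $h_\eta(\eta)=0$, verify that $\varphi^\star$ solves the ODE $-\varphi^{\star\prime}(s) = -\left(\frac{V(s)}{s}\right)' + \frac{2}{s}(e^{-s} - sV_\star(s))\varphi^\star(s)$, and conclude by \cref{negdiffineq} together with the second inequality of \cref{conv-diffineq-lower}. The only difference is that you spell out the variation-of-parameters computation verifying the ODE, which the paper asserts without detail; your computation of $b'(s)/b(s)$ and $h_\eta'(s)$ is correct.
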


    \begin{proof}
        Let $\varphi^\star (s) = \frac{\varphi(\eta)}{b(\eta)} \cdot b(s) + h_\eta(s) $. Because $h_\eta ( \eta ) = 0$, we have
        $\varphi^\star(\eta) = \varphi(\eta)$. Moreover, $\varphi^\star(s)$ satisfies the differential equation
        \[ -\varphi^{\star\prime}(s) = -\left( \frac{ V(s) }{ s } \right)' + \frac{2}{s} (e^{-s} - s V_\star(s)) \varphi^\star(s). \]

        \cref{conv-diffineq-lower} implies $ -\varphi'(s) < - \left( \frac{ V(s) }{ s } \right)' + \frac{2}{s} (e^{-s} - s V_\star(s)) \varphi(s).$ Thus, \cref{negdiffineq} again proves that $\varphi(s) < \varphi^\star(s)$ when $s \in (0, \eta)$, completing the lemma.
    \end{proof}

    We now prove useful results about the functions $r(s)$ and $h_\eta(s)$, which will allow us to bound $\varphi(s)$.

    \begin{lemma}
        The function $r(s)$ is positive, continuous, and increasing when $s \in (0,\infty)$. It also satisfies
        \[ \lim_{s \rightarrow 0^+} r(s) = r(0) > 0. \]
    \end{lemma}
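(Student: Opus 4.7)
The plan is to address each claim in turn, with the main effort going into the limit at $0^+$. Positivity and continuity on $(0,\infty)$ are immediate from the definition of $r(s)$ as the exponential of a sum of two continuous integrals: the integrand $\frac{1-e^{-t}}{t}$ extends continuously to $1$ at $t=0$, and $V_\star(t)$ is continuous on $(0,\infty)$ by dominated convergence applied to its defining integral in \cref{conv-V-defn}. Thus $r$ is continuous and strictly positive on $(0,\infty)$.

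For monotonicity, I would differentiate directly, obtaining
\[ r'(s) = r(s) \left( 2 \cdot \frac{1-e^{-s}}{s} + 2 V_\star(s) \right). \]
Both summands in the parentheses are strictly positive for $s > 0$: the first because $1 - e^{-s} > 0$, and the second because $(1-Z_\nu(L)) e^{-sL} > 0$ on a set of positive measure in $[1,\infty)$ (as $\nu$ is an ldf and so $Z_\nu(L) < 1$ for every finite $L$). Hence $r'(s) > 0$ on $(0,\infty)$.

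The main obstacle, and the reason the hypothesis of \cref{conv-thm} is needed, is showing that $\lim_{s \to 0^+} r(s)$ exists and is positive. Since $r$ is an exponential, it suffices to verify that each of the two integrals appearing in the exponent has a finite limit as $s \to 0^+$. The first integral $\int_1^s \frac{1-e^{-t}}{t} \df{t}$ converges because the integrand extends continuously (with value $1$) to $t = 0$, so its antiderivative is continuous on $[0,\infty)$. The delicate case is $\int_1^s V_\star(t) \df{t}$, for which I would show that $V_\star$ is integrable on $(0,1]$ by Fubini's theorem (legal since the integrand is nonnegative):
\[ \int_0^1 V_\star(t) \df{t} = \int_1^\infty (1 - Z_\nu(L)) \int_0^1 e^{-tL} \df{t} \, \df{L} = \int_1^\infty (1 - Z_\nu(L)) \cdot \frac{1 - e^{-L}}{L} \df{L}. \]
Bounding $1 - e^{-L} \leq 1$ yields $\int_0^1 V_\star(t) \df{t} \leq \int_1^\infty \frac{1-Z_\nu(L)}{L} \df{L}$, which is finite precisely by the standing hypothesis of \cref{conv-thm}. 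Both exponent integrals therefore converge as $s \to 0^+$, so $r(0) := \lim_{s \to 0^+} r(s)$ is a well-defined finite positive real number, completing the lemma.
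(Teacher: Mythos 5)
Your proof is correct and takes essentially the same route as the paper: continuity, positivity, and monotonicity are immediate, and the limit at $0^+$ is reduced via Fubini to the standing hypothesis $\int_1^\infty \frac{1-Z_\nu(L)}{L}\df{L} < \infty$ (the paper integrates $t$ over $[0,\infty)$ to get an exact identity where you integrate over $[0,1]$ and bound $1-e^{-L} \le 1$, an immaterial difference). One cosmetic quibble: your claim that $Z_\nu(L) < 1$ for every finite $L$ can fail for finite-support ldfs, but this does not matter since $r'(s) > 0$ already follows from the strictly positive term $\frac{1-e^{-s}}{s}$ alone.
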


    \begin{proof}
        Recall
        \[r(s) := \exp\left( 2 \int_1^s \frac{1-e^{-t}}{t} \df{t} + 2 \int_1^s V_\star(t) \df{t} \right). \] 
        Note $r(s)$ is continuous (increasing) because the integrals are continuous (increasing), and it is positive by inspection. Moreover, $\frac{1-e^{-t}}{t}$ is bounded on $(0,1]$, so $ \int_{0}^1 \frac{1 - e^{-t}}{t} \df{t}$ exists.
        
        Meanwhile, we have assumed that $\int_1^\infty \frac{1 - Z_\nu(L)}{L} \df{L} < \infty,$ and
        \begin{equation}\label{conv-V-star-int-finite}
            \int_1^\infty \frac{1 - Z_\nu(L)}{L} \df{L} = \int_1^\infty \int_0^\infty  (1 - Z_\nu(L)) e^{-tL} \df{t} \df{L} = \int_0^\infty V_\star (t) \df{t},
        \end{equation}
        where the last integral interchange is allowable due to Fubini's Theorem. Since the limit $ \lim_{s \rightarrow 0^+} \int_{s}^{1} V_\star(t) \df{t}$ is strictly less than the above value, it is finite as well, which proves that $ \lim_{s \rightarrow 0^+ } r(s)$ exists and is equal to $r(0)$, which by definition of $r$ must be positive.
    \end{proof}

    This lemma will allow us to bound the approximation error $h_\eta(s)$ by a function that converges to $0$ as $\eta$ becomes small.
    \begin{lemma}\label{conv-p-up}
        There exist continuous functions $H_\eta : [0, \eta] \rightarrow \RR$ for each $\eta > 0$ that satisfy $\lim_{\eta \rightarrow 0^+} H_\eta (0) = 0$ and $ s^2 h_\eta(s) \leq H_\eta(s)$ when $s \in (0,\eta)$.
    \end{lemma}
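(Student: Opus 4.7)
The plan is to extract the monotonic factor $r$ and reduce the bound to an integral of $-(V(t)/t)'$ that is tractable near $0$. We rewrite
\[ s^2 h_\eta(s) \;=\; r(s)\int_s^\eta -\left(\frac{V(t)}{t}\right)' \cdot \frac{t^2}{r(t)}\, dt. \]
Since $r$ is positive, continuous, and increasing on $[0,\eta]$ with $r(0) > 0$ by the preceding lemma, we have $r(s)/r(t) \le r(\eta)/r(0)$ whenever $0 \le s \le t \le \eta$. This motivates defining
\[ H_\eta(s) \;:=\; \frac{r(\eta)}{r(0)}\int_s^\eta -\left(\frac{V(t)}{t}\right)' t^2\, dt, \]
which is continuous on $[0,\eta]$ and dominates $s^2 h_\eta(s)$ on $(0,\eta)$. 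It thus suffices to show $H_\eta(0) \to 0$ as $\eta \to 0^+$.

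For this, note that $V(t)/t$ is the Laplace transform of the nonnegative, nondecreasing function $g(L) := \int_0^L (1 - Z_\nu(u))\, du$, hence is itself positive and decreasing in $t$, so $-(V(t)/t)' \geq 0$ and our integrand is nonnegative. Integration by parts with $u = t^2$ and $dv = -(V(t)/t)'\, dt$ gives
\[ \int_0^\eta -\left(\frac{V(t)}{t}\right)' t^2\, dt \;=\; -\eta V(\eta) + \lim_{t \to 0^+} tV(t) + 2\int_0^\eta V(t)\, dt. \]
Since $1 - Z_\nu(L) \to 0$ as $L \to \infty$, the Abelian Final Value Theorem (\cref{Laplace-Properties}) yields $\lim_{t \to 0^+} tV(t) = 0$, and in particular $\eta V(\eta) \to 0$ as $\eta \to 0^+$.

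The main obstacle, and the step that genuinely consumes the hypothesis $\int_1^\infty (1 - Z_\nu(L))/L\, dL < \infty$, is showing $\int_0^\eta V(t)\, dt \to 0$. By Fubini,
\[ \int_0^\eta V(t)\, dt \;=\; \int_0^\infty (1 - Z_\nu(L)) \cdot \frac{1 - e^{-\eta L}}{L}\, dL. \]
Splitting at a large fixed $L = A > 1$: on $[0, A]$ use $\frac{1-e^{-\eta L}}{L} \leq \eta$ to obtain an $O(\eta)$ contribution; on $[A, \infty)$ use $\frac{1-e^{-\eta L}}{L} \leq \frac{1}{L}$ to bound the contribution by the tail $\int_A^\infty (1-Z_\nu(L))/L\, dL$, which is arbitrarily small for $A$ large by the hypothesis. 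Choosing $A$ large first and then $\eta$ small makes both pieces arbitrarily small, so $\int_0^\eta V(t)\, dt \to 0$. Combined with the vanishing of the boundary terms, this yields $H_\eta(0) \to 0$ and completes the proof.
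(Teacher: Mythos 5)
Your proof is correct and follows essentially the same route as the paper: extract the monotone factor $r$, bound the ratio by $r$-monotonicity, integrate $-\left(V(t)/t\right)'t^2$ by parts, kill the boundary term with the Abelian final value theorem, and spend the hypothesis $\int_1^\infty (1-Z_\nu(L))/L\,\mathrm{d}L<\infty$ (via Fubini) to make $\int_0^\eta V(t)\,\mathrm{d}t$ vanish. The only cosmetic differences are that you use the constant prefactor $r(\eta)/r(0)$ where the paper keeps $r(s)/r(0)$, and you handle $\int_0^\eta V\to 0$ by a direct splitting argument rather than the paper's bound $V\le 1+V_\star$ together with the previously established finiteness of $\int_0^\infty V_\star$.
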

    \begin{proof}
        Fix $\eta > 0$. First, note $-\left( \frac{V(s)}{s} \right)'$ is positive, as $V(s)$ is a decreasing function (c.f. \cref{conv-V-defn}). Then, for all $0 < s < \eta$,
        \begin{align}
            h_\eta(s) &= \frac{r(s)}{s^2} \int_s^\eta -\left( \frac{V(t)}{t} \right)' \cdot \frac{t^2}{r(t)} \df{t} \tag{\cref{conv-br-defn}, \cref{conv-heta-defn}} \nonumber\\ 
            &\leq \frac{r(s)}{s^2} \cdot \int_s^\eta - \left( \frac{V(t)}{t} \right)' \cdot \frac{t^2}{r(0)} \df{t}, \tag{$r(s)$ is increasing}\nonumber\\
            h_\eta(s) &\leq \frac{1}{s^2} \cdot \frac{r(s)}{ r(0) } \cdot \int_s^\eta - \left( \frac{V(t)}{t} \right)' \cdot t^2 \df{t}.\label{conv-hetabound}
        \end{align}
        We may use integration by parts on the integral, which yields
        \[ \int_s^\eta - \left( \frac{V(t)}{t} \right)' \cdot t^2 \df{t} = sV(s) - \eta V(\eta) + 2 \int_{s}^\eta V(t) \df{t} \leq s V(s) + 2 \int_s^\eta V(t) \df{t}. \]
        Recall $V(t) = \frac{1 - e^{-t}}{t} + V_\star (t) \leq 1 + V_\star (t)$. Substitution yields
        \[ s V(s) + 2 \int_s^\eta V(t) \df{t} \leq s V(s) + 2 (\eta-s) + 2 \int_s^\eta V_\star (t) \df{t}. \]
        Substituting this upper bound back into \cref{conv-hetabound} yields the estimation $ h_\eta(s) \leq \frac{H_\eta(s)}{s^2}, $ with $H_\eta : (0, \eta] \rightarrow \RR$ given by
        \[ H_\eta(s) := \frac{r(s)}{r(0)} \cdot \left( s V(s) + 2 (\eta-s) + 2 \int_s^\eta V_\star (t) \df{t} \right). \]
        The continuity of $H_\eta(s)$ follows by the continuity of $r(s)$ and $V(s)$. Note $H_\eta(s)$ is not yet defined at $0$ since $V$ is not defined at $0$. However, because as $L \rightarrow \infty$, we have $1 - Z_\nu(L) \rightarrow 0$. The abelian final value theorem (c.f. \cref{Laplace-Properties}) then implies $\lim_{s\rightarrow 0^+} s V(s) = 0.$ Moreover, we have previously shown $\int_0^\eta V_\star (t) \df{t}$ is finite (c.f. \cref{conv-V-star-int-finite}). It follows that
        \[ \lim_{s \rightarrow 0^+} H_\eta(s) =  2 \eta + 2 \int_0^\eta V_\star (t) \df{t}. \] 
        Define $H_\eta(0)$ to be this value. By inspection of this formula, $\lim_{\eta \rightarrow 0^+} H_\eta(0) = 0$.
    \end{proof}

    Finally, we analyze the behavior of $\varphi(s)$ as $s \rightarrow 0$.

    \begin{lemma}
        As $s\rightarrow 0^+$, there exists a positive constant $\alpha_\nu$ such that
        \[ \varphi(s) \sim \frac{\alpha_\nu}{s^2}. \]
    \end{lemma}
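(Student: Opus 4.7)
The plan is to combine the two-sided bounds on $\varphi(s)$ from \cref{conv-lap-lower,conv-lap-upper} with the estimate on $h_\eta$ from \cref{conv-p-up} to show that $s^2 \varphi(s)$ is trapped in an interval of arbitrarily small width as $s \to 0^+$, and then invoke \cref{limitfromupperlower}.

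Concretely, I would first multiply both bounds through by $s^2$. Using $b(s) = r(s)/s^2$ and $s^2 h_\eta(s) \le H_\eta(s)$, this gives for each fixed $\eta > 0$ and every $s \in (0,\eta)$,
\[ \frac{\varphi(\eta)}{b(\eta)} \cdot r(s) \;\le\; s^2 \varphi(s) \;\le\; \frac{\varphi(\eta)}{b(\eta)} \cdot r(s) + H_\eta(s). \]
Since $r$ is continuous on $[0,\infty)$ with $r(0) > 0$ and $H_\eta$ is continuous on $[0,\eta]$, both sides extend continuously to $s = 0$: the lower bound tends to $\tfrac{\varphi(\eta)}{b(\eta)} r(0)$, and the upper bound tends to $\tfrac{\varphi(\eta)}{b(\eta)} r(0) + H_\eta(0)$. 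Thus, for every $\eta > 0$, there is a $\delta = \delta(\eta) \in (0,\eta)$ such that $s^2 \varphi(s)$ lies in an interval of length $H_\eta(0) + o_{\eta}(1)$ centered near $\tfrac{\varphi(\eta)}{b(\eta)} r(0)$ for all $s \in (0,\delta)$.

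Next, I would use $\lim_{\eta \to 0^+} H_\eta(0) = 0$ to make this trapping interval arbitrarily narrow. Given $\eps > 0$, pick $\eta$ small enough that $H_\eta(0) < \eps/2$, and then shrink $\delta$ so that both the upper and lower bounds are within $\eps/4$ of their respective limits at $s = 0$. This puts $s^2\varphi(s)$ in an interval of length less than $\eps$ on $(0,\delta)$, which is exactly the hypothesis of \cref{limitfromupperlower} applied to $f(s) = s^2 \varphi(s)$. That proposition then yields existence of $\alpha_\nu := \lim_{s \to 0^+} s^2 \varphi(s)$, giving $\varphi(s) \sim \alpha_\nu / s^2$.

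Finally, to show $\alpha_\nu > 0$, I would observe that for any fixed $\eta > 0$ with $\varphi(\eta) > 0$ (which holds since $\EE[S_L] > 0$ for $L$ slightly greater than $1$ and $\varphi$ is the Laplace transform of a nonnegative function with some mass), the lower bound $s^2 \varphi(s) \ge \tfrac{\varphi(\eta)}{b(\eta)} r(s)$ passes to the limit to give $\alpha_\nu \ge \tfrac{\varphi(\eta)}{b(\eta)} r(0) > 0$. The main obstacle I anticipate is not any single technical estimate (each bound is already in hand) but rather the bookkeeping that shows the center $\tfrac{\varphi(\eta)}{b(\eta)} r(0)$ of the trapping interval must stabilize as $\eta \to 0^+$: one has to verify that the narrowing of the interval forces its midpoint to have a limit. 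This is exactly what \cref{limitfromupperlower} is designed to abstract away, and once it is applied, the identification of $\alpha_\nu$ and its positivity follow immediately from the estimates just described.
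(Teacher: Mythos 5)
Your proposal is correct and follows essentially the same route as the paper: both use \cref{conv-lap-lower}, \cref{conv-lap-upper}, and \cref{conv-p-up} to trap $s^2\varphi(s)$ in an interval of width controlled by $H_\eta(0)+o_\eta(1)$ with lower endpoint $\tfrac{\varphi(\eta)}{b(\eta)}r(0)$, then invoke \cref{limitfromupperlower} and deduce positivity from the positivity of the lower endpoints. The only differences are cosmetic choices of constants ($\eps/2$, $\eps/4$ versus the paper's $\eps/3$ splits).
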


    \begin{proof}
        Fix any $\eps > 0$. We will now find $l_\eps, \delta_\eps$ that satisfy the conditions of \cref{limitfromupperlower} for our choice of $\eps$, i.e. that for all $0 < s < \delta_\eps$, we have $l_\eps < s^2 \varphi(s) < l_\eps + \eps$.

        We use the functions $H_\eta$ as defined in \cref{conv-p-up}. Choose sufficiently small $\eta$ such that $ H_\eta(0) < \frac{\eps}{3}.$ Then, choose sufficiently small $\delta_1$ such that for all $0 < s < \delta_1$, we have 
        \begin{equation}\label{conv-H-eta}
            H_\eta(s) < H_\eta(0) + \frac{\eps}{3} < \frac{2\eps}{3}.
        \end{equation}
        Finally, because $r(s)$ is increasing, we may choose $\delta_2$ such that for all $0 < s < \delta_2$, 
        \begin{equation}\label{conv-r-bound}
            r(0) < r(s) < r(0) + \frac{b(\eta)}{\varphi(\eta)} \cdot \frac{\eps}{3} .
        \end{equation}

        Let $\delta = \inf\{ \eta, \delta_1, \delta_2 \}$. By \cref{conv-lap-lower}, for $0 < s < \delta$, we have $\varphi(s) \geq \frac{\varphi(\eta)}{b(\eta)} \cdot b(s)$, so
        \[
            s^2 \varphi(s) \geq \frac{\varphi(\eta)}{b(\eta)} \cdot r(s) > \frac{\varphi(\eta)}{b(\eta)} \cdot r(0).
        \]
        Meanwhile, by \cref{conv-lap-upper}, for $0 < s < \delta$, we have $\varphi(s) \leq \frac{\varphi(\eta)}{b(\eta)} \cdot b(s) + h_\eta (s), $ so
        \[ s^2 \varphi(s) \leq \frac{\varphi(\eta)}{b(\eta)} \cdot r(s) + s^2 h_\eta(s) < \frac{\varphi(\eta)}{b(\eta)} \cdot r(0) + \eps, \]
        where the second inequality follows from \cref{conv-r-bound} and from $s^2 h_\eta(s) \leq H_\eta(s) < \frac{2\eps}{3}$ (c.f. \cref{conv-p-up} and \cref{conv-H-eta}). 
        
        Define $\delta_\eps = \delta$ and $l_\eps = \frac{\varphi(\eta)}{b(\eta)} \cdot r(0).$ We have shown that for $0 < s < \delta_\eps$, we have $l_\eps < s^2 \varphi(s) < l_\eps + \eps,$ which satisfies the conditions of \cref{limitfromupperlower} and proves that $\lim_{s \rightarrow 0^+} s^2\varphi(s)$ exists. Denote this limit $\alpha_\nu$. Notably, all the lower bounds $l_\eps$ are positive, so $\alpha_\nu$ must also be positive, concluding the lemma.
    \end{proof}

    By the Hardy-Littlewood Tauberian Theorem (c.f. \cref{Tauberian}), $\varphi(s) \sim \frac{\alpha_\nu}{s^2}$ implies that as $L \rightarrow \infty$,
    \[ \int_0^L \EE[S_t] \df{t} \sim \frac{\alpha_\nu}{2} L^2. \]
    From here, an analytical argument will complete the proof. 

    Pick arbitrary $\eps > 0$. We will first show there exists $L_\ell$ such that $L > L_\ell$ implies $\frac{\EE[S_L]}{L} > \alpha_\nu(1-\eps)$, and we will later show there exists $L_u$ such that $L > L_u$ implies $\frac{\EE[S_L]}{L} < \alpha_\nu (1+ \eps)$, proving $\lim_{L \rightarrow \infty } \frac{\EE[S_L]}{L} = \alpha_\nu$. 
    
    Begin by choosing $L_1$ for which $L > L_1$ implies $\int_0^L \EE[S_t] \df{t} > (1-\eps)^{1/4} \cdot \frac{\alpha_\nu}{2} L^2$.  Moreover, choose $L_2$ for which $L > L_2$ implies $Z_\nu(L) \geq (1 - \eps)^{1/4}$. By \cref{ldf-integral-recurrence}, \[ \left( \int_0^L Z_\nu(t) \df{t} \right) \EE[S_L] = 2 \int_0^{L} \EE[S_t] Z_\nu(L-t) \df{t}. \]
    We always have $Z_\nu(L) < 1$. Therefore,
    \[ L \EE[S_L] \geq \left( \int_0^L Z_\nu(t) \df{t} \right) \EE[S_L] = 2 \int_0^L \EE[S_t] Z_\nu(L-t) \df{t} \geq  2(1-\eps)^{1/4} \int_{0}^{L-L_2} \EE[S_t] \df{t},\]
    where the last equality follows by the definition of $L_2$. Then, when $L - L_2 > L_1,$ we have
    \[2(1-\eps)^{1/4} \int_{0}^{L-L_2} \EE[S_t] \df{t} > (1-\eps)^{1/2} \alpha_\nu (L-L_2)^2.\]
    Choose $L_\ell$ sufficiently large so that $L > L_\ell$ implies $(L-L_2) > (1-\eps)^{1/4} L$, and so that $L_\ell > L_1 + L_2$. Then, for $L > L_\ell$,
    \[ (1-\eps)^{1/2} \alpha_\nu (L-L_2)^2 > (1-\eps) \alpha_\nu L^2.\]
    Thus, for $L > L_\ell$, we have $L\EE[S_L] > (1-\eps) \alpha_\nu L^2$, or $\EE[S_L] > (1-\eps) \alpha_\nu L.$

    We now show there exists $L_u$ such that $L > L_u$ implies $\frac{\EE[S_L]}{L} < \alpha_\nu (1+\eps)$. Choose $L_3$ for which $L > L_3$ implies $\int_0^L \EE[S_t] \df{t} < (1 + \eps)^{1/3} \cdot \frac{\alpha_\nu}{2} L^2$, and choose $L_4$ for which $L > L_4$ implies $Z_\nu(L) > (1 + \eps)^{-1/3} .$ For $L > L_3 + L_4$, we have similarly to before that
    \begin{align*}
        \alpha_\nu(1 + \eps)^{1/3} L^2 &> 2 \int_0^L \EE[S_t] \df{t} \geq 2 \int_0^L \EE[S_t] Z_\nu(L-t) \df{t} = \left( \int_0^L Z_\nu(t) \df{t} \right) \EE[S_L] \\
        &\geq \left( \int_{L_4}^L Z_\nu(t) \df{t} \right) \EE[S_L] > \frac{L-L_4}{(1 + \eps)^{1/3}} \EE[S_L].
    \end{align*}
    Choose $L_u$ sufficiently large so that $L > L_u$ implies $(L-L_4) > \frac{L}{(1 + \eps)^{1/3}}$, and so that $L_u > L_3 + L_4$. Then, $\frac{L-L_4}{(1 + \eps)^{1/3}} \EE[S_L] > \frac{L\EE[S_L]}{(1 + \eps)^{2/3}} .$ Thus, for $L > L_u$, we have $ \alpha_\nu(1 + \eps)^{1/3} L^2 > \frac{L\EE[S_L]}{(1 + \eps)^{2/3}}$, or $ \EE[S_L] < (1 + \eps) \alpha_\nu L.$ This completes the proof of the theorem.
\end{proof}

\subsection{Divergent Length Distributions}\label{div}

In this section we consider the $\nu$-RSA process for a divergent ldf $\nu(\ell)$ (c.f. \cref{ldf-defn}). For any divergent ldf $\nu$, consider its normalizing constant $Z_\nu$. We must have
\[ \frac{\int_0^L t Z_\nu(t) \df{t}}{\int_0^L Z_\nu(t) \df{t}} \geq \frac{L}{2}, \]
because if we treat $Z_\nu$ as an non-normalized probability distribution on $[0,L]$, and we sample random variable $X$ from the distribution, the left hand side is simply $\EE[X]$. But $Z_\nu$ is a strictly increasing function, and so $\EE[X]$ is skewed to above $\frac{L}{2}$. The difference between the left hand side and the right hand side is intuitively a measure of how quickly $Z_\nu$ grows, as when $Z_\nu$ grows very quickly, $\EE[X]$ is skewed higher. Our next theorem shows that $\EE[S_L] = o(L)$ for any divergent ldf that satisfies a slightly stronger version of the above inequality.

\begin{theorem}\label{div-thm}
    Let $\nu$ be a divergent ldf. If there exists $\eps > 0$ such that for sufficiently large $L$, we have $ \frac{\int_0^L t Z_\nu(t) \df{t}}{\int_0^L Z_\nu(t) \df{t}} \geq (1+\eps) \cdot \frac{L}{2},$
    then in the $\nu$-RSA process,
    \[ \EE[S_L] = o(L). \]
\end{theorem}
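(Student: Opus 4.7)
The plan is to exploit the integral recurrence from \cref{ldf-integral-recurrence} directly, together with a $\limsup$ argument, so that the ``top-heaviness'' hypothesis on $Z_\nu$ forces any positive limit superior of $\EE[S_L]/L$ to collapse. Let $\phi(L) := \EE[S_L]$ and set $A(L) := \int_0^L Z_\nu(t)\df{t}$ and $B(L) := \int_0^L t\,Z_\nu(t)\df{t}$. \cref{ldf-integral-recurrence} combined with the change of variables $u = L-t$ gives
\[ A(L)\phi(L) = 2\int_0^L \phi(L-u)\,Z_\nu(u)\df{u}. \]
Define $\alpha := \limsup_{L\to\infty} \phi(L)/L$; since $\phi(L)\le L$ trivially, $\alpha \in [0,1]$, and the goal is to show $\alpha = 0$.

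The first step is to convert the asymptotic control of $\phi(L)/L$ into a pointwise affine bound. Fix $\delta > 0$ and pick $L_0$ so that $\phi(L) \le (\alpha+\delta)L$ for all $L\ge L_0$; combined with $\phi(L)\le L\le L_0$ for $L<L_0$, this yields the uniform estimate
\[ \phi(L) \le (\alpha+\delta)L + L_0 \quad\text{for all } L\ge 0. \]
Substituting this bound into the recurrence and computing $\int_0^L (L-u)Z_\nu(u)\df{u} = LA(L) - B(L)$ gives
\[ A(L)\phi(L) \le 2(\alpha+\delta)\bigl(LA(L) - B(L)\bigr) + 2L_0 A(L). \]

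The second step is to feed in the hypothesis. The assumption $B(L)/A(L) \ge (1+\eps)L/2$ for large $L$ rearranges to $LA(L) - B(L) \le (1-\eps)LA(L)/2$, so the bound above becomes
\[ A(L)\phi(L) \le (\alpha+\delta)(1-\eps)\,LA(L) + 2L_0 A(L). \]
Dividing by $LA(L)$ (valid for large $L$, since divergence of $\nu$ makes $A(L)\to\infty$) yields
\[ \frac{\phi(L)}{L} \le (\alpha+\delta)(1-\eps) + \frac{2L_0}{L}. \]
Taking $\limsup$ as $L\to\infty$ gives $\alpha \le (\alpha+\delta)(1-\eps)$, and then letting $\delta\to 0$ gives $\alpha \le \alpha(1-\eps)$, hence $\alpha\eps \le 0$. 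Since $\eps>0$ and $\alpha\ge 0$, we conclude $\alpha = 0$, which is precisely $\EE[S_L] = o(L)$.

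I do not expect a serious obstacle here: the only mildly delicate point is the passage from the asymptotic bound for $L\ge L_0$ to the uniform affine bound $\phi(L)\le (\alpha+\delta)L+L_0$, which avoids splitting the convolution into ``bulk'' and ``edge'' integrals and thereby sidesteps any need to control the ratio $Z_\nu(L)/(LA(L))$. This is important because for very rapidly growing $\nu$ (e.g.\ $\nu$ such that $A(L)=e^{L^3}$) this ratio need not tend to zero even though the hypothesis is satisfied, so a naive splitting argument would fail; the affine global bound absorbs the boundary contribution into the harmless $2L_0 A(L)$ term and lets the ``top-heavy'' estimate $LA(L) - B(L)\le (1-\eps)LA(L)/2$ do all the work.
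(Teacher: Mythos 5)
Your proof is correct. Every step checks out: the global affine bound $\EE[S_L]\le(\alpha+\delta)L+L_0$ is valid (using $\EE[S_L]\le L$ below $L_0$ and the definition of $\limsup$ above it), the identity $\int_0^L(L-u)Z_\nu(u)\df{u}=LA(L)-B(L)$ is exact, the hypothesis converts this to $\tfrac{(1-\eps)L}{2}A(L)$, and the self-improving inequality $\alpha\le(\alpha+\delta)(1-\eps)$ forces $\alpha=0$.

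The route is genuinely different from the paper's. The paper proves \cref{div-thm} via an induction (\cref{div-lemma}): assuming $\EE[S_L]\le rL$ for $L>b$, it splits the convolution in \cref{ldf-integral-recurrence} at $t=b$, uses $\EE[S_t]\le t$ on $[0,b]$ and the inductive bound on $[b,L]$, and concludes $\EE[S_L]\le r(1-\tfrac{\eps}{2})L$ for $L$ large, so that $\EE[S_L]/L$ is eventually below $(1-\tfrac{\eps}{2})^n$ for every $n$. You replace the infinite iteration with a single $\limsup$ fixed-point argument, and you replace the splitting of the integral with a uniform affine majorant whose constant term contributes only $2L_0A(L)$, which vanishes after dividing by $LA(L)$. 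The core analytic input --- the rearrangement $LA(L)-B(L)\le\tfrac{(1-\eps)L}{2}A(L)$ --- is identical in both proofs; what your version buys is a shorter, one-pass argument with no induction bookkeeping. One small correction to your closing remark: the paper's splitting argument does not actually require $Z_\nu(L)/(LA(L))\to 0$; it bounds the edge term by
\[ \frac{\int_0^b t\,Z_\nu(L-t)\df{t}}{\int_0^L Z_\nu(t)\df{t}} \le b\cdot\frac{\int_{L-b}^{L}Z_\nu(u)\df{u}}{\int_0^L Z_\nu(u)\df{u}} \le b, \]
which is insensitive to the growth rate of $Z_\nu$. So the splitting approach is not in danger for rapidly growing $\nu$; your affine-bound device is a simplification, not a repair.
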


\begin{example}
 \cref{div-thm} applies to the following natural families of ldfs, where we show $\EE[S_L]$ grows sublinearly with $L$:
    \begin{enumerate}
        \item The ldf $\nu(\ell) = \ell^p$ for all $\ell > 1$ for any fixed $p > -1$ (we give more precise asymptotics for this case in \cref{plaw}).
        \item The the ldf $\nu(\ell) = e^{a\ell}$ for all $\ell > 1$ for any fixed $a > 0$.
    \end{enumerate}
\end{example}

We will prove~\cref{div-thm} using the following lemma:

\begin{lemma}\label{div-lemma}
    Given any integer $n \geq 0$, there exists $b$ such that $L > b$ implies
    \[ \EE[S_L] \leq \left(1 - \frac{\eps}{2}\right)^n L. \] 
\end{lemma}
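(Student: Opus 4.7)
The plan is to induct on $n$. The base case $n=0$ is immediate because $\EE[S_L] \leq L$ for every $L$ (empty space cannot exceed the interval length), so any $b \geq 0$ works.

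For the inductive step, assume the existence of $b_n$ with $\EE[S_L] \leq (1-\eps/2)^n L$ whenever $L > b_n$, and produce the next threshold $b_{n+1}$ using the recurrence of~\cref{ldf-integral-recurrence}, rewritten (via the substitution $s = L - u$) as
\[ \EE[S_L] = \frac{2 \int_0^L \EE[S_s] \, Z_\nu(L-s) \df s}{\int_0^L Z_\nu(t) \df t}. \]
I would split the numerator integral at $s = b_n$. On $(b_n, L]$, plugging in $\EE[S_s] \leq (1-\eps/2)^n s$ and undoing the substitution bounds the main contribution by $(1-\eps/2)^n \cdot \frac{2 \int_0^L (L-u) Z_\nu(u) \df u}{\int_0^L Z_\nu(u) \df u}$. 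Expanding $\int_0^L (L-u) Z_\nu(u) \df u = L \int_0^L Z_\nu(u) \df u - \int_0^L u Z_\nu(u) \df u$ and invoking the theorem's hypothesis $\int_0^L u Z_\nu(u) \df u \geq (1+\eps)(L/2)\int_0^L Z_\nu(u) \df u$ (valid for $L$ sufficiently large) shows the main term is at most $(1-\eps/2)^n (1-\eps) L$. On $[0, b_n]$, using the trivial bound $\EE[S_s] \leq b_n$, the remainder contribution is at most $\frac{2 b_n \int_{L-b_n}^L Z_\nu(u) \df u}{\int_0^L Z_\nu(u) \df u} \leq 2 b_n$, where the final inequality is just $[L-b_n, L] \subseteq [0,L]$.

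Combining yields $\EE[S_L] \leq (1-\eps/2)^n (1-\eps) L + 2 b_n$. Since $(1-\eps/2)^n(1-\eps) = (1-\eps/2)^{n+1} - (\eps/2)(1-\eps/2)^n$, the desired bound $\EE[S_L] \leq (1-\eps/2)^{n+1} L$ follows as soon as $L \geq \frac{4 b_n}{\eps(1-\eps/2)^n}$, so I take $b_{n+1}$ to exceed both this quantity, $b_n$, and the threshold beyond which the theorem's hypothesis holds. The main obstacle I anticipate is ensuring the small-$s$ remainder does not blow up, since for rapidly growing $Z_\nu$ one might worry that $Z_\nu(L)$ dwarfs $\int_0^L Z_\nu(u) \df u$; the crude but crucial observation $\int_{L-b_n}^L Z_\nu(u) \df u \leq \int_0^L Z_\nu(u) \df u$ handles this uniformly in $\nu$, keeping the remainder a constant rather than something scaling with $L$, which is exactly what is needed for the induction to close.
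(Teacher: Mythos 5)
Your proof is correct and follows essentially the same route as the paper's: induction on $n$, splitting the integral in \cref{ldf-integral-recurrence} at the previous threshold $b_n$, applying the moment hypothesis on $Z_\nu$ to get the $(1-\eps)$ factor on the main term, and absorbing the $[0,b_n]$ remainder into a constant via $\int_{L-b_n}^L Z_\nu \leq \int_0^L Z_\nu$. The only cosmetic difference is that you bound the tail contribution by $2b_n$ rather than the paper's slightly tighter $2(1-r)b$, which changes nothing since either is a constant that the linear gap $(\eps/2)(1-\eps/2)^n L$ eventually dominates.
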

\begin{proof} We prove the lemma by induction. The lemma immediately holds for $n = 0$: for any $L$, we must have $\EE[S_L] \leq L$, as the empty space $S_L$ cannot exceed $L$.
    
    Given that \cref{div-lemma} holds for $n=k-1$, we will now show it holds for $n=k$. That is, given that there exists $b$ such that $L>b$ implies $\EE[S_L] \leq \left(1 - \frac{\eps}{2}\right)^{k-1} L$, we will show $\EE[S_L] \leq \left(1 - \frac{\eps}{2}\right)^{k} L$ also holds for sufficiently large $L$
    
    For brevity, define $r := \left(1 - \frac{\eps}{2}\right)^{k-1}$. When $L > b$, we have
    \begin{align}
        \left( \int_0^L Z_\nu(t) \df{t} \right) \EE[S_L] &= 2 \int_0^{b} \EE[S_t] Z_\nu(L-t) \df{t} + 2 \int_b^{L} \EE[S_t] Z_\nu(L-t) \df{t}\tag{By \cref{ldf-integral-recurrence}} \nonumber\\
        &\leq 2 \int_0^{b} t Z_\nu(L-t) \df{t} + 2 r \int_b^{L} t Z_\nu(L-t) \df{t} \nonumber\\
        &= 2 r \int_0^L t  Z_\nu(L-t) \df{t} + 2 \left(1- r \right) \int_0^b t Z_\nu(L-t) \df{t}.\label{div-ineq}
    \end{align}
    Note that $\int_0^L t Z_\nu(L-t) \df{t} = L \int_{0}^L Z_\nu(t) \df{t} - \int_{0}^L t Z_\nu(t) \df{t}.$ 
    Let $ L^\star $ be sufficiently large so that $ L > L^\star$ implies $ \frac{\int_0^L t Z_\nu(t) \df{t}}{\int_0^L Z_\nu(t) \df{t}} \geq (1+\eps) \cdot \frac{L}{2},$ as described by the theorem statement. Then, when $L > L^\star$, we may rearrange \cref{div-ineq} into
    \begin{align*}
        \EE[S_L] &\leq 2 r L - (1+ \eps) r L + 2 \left(1-r\right) \cdot \frac{\int_0^b t  Z_\nu (L-t) \df{t}} {\int_0^L Z_\nu(t) \df{t} } \\
        &\leq (1 - \eps) r L  + 2 \left(1-r \right) b.
    \end{align*}
    But $2 (1-r) b$ is constant, so it follows that $\EE[S_L] < r \left(1- \frac{\eps}{2}\right) L = \left(1- \frac{\eps}{2}\right)^{k} L$ for sufficiently large $L$, completing the induction.
\end{proof}

\begin{proof}[Proof of \cref{div-thm}:] For any $\eta > 0$, we may find $n$ such that $ \left(1 - \frac{\eta}{2}\right)^n < \eta$. \cref{div-lemma} then implies that $\EE[S_L] < \eta L$ for sufficiently large $L$. Thus, $\lim_{L \rightarrow \infty} \frac{\EE[S_L]}{L} = 0$, completing the proof of~\cref{div-thm}.
\end{proof}

\begin{figure}
    \centering
    \begin{subfigure}{.5\textwidth}
        \begin{tikzpicture}[scale=1]
            \begin{axis} [axis lines = left,
                xlabel = \(L\),
                ylabel = {$\EE[S_L]$},
                width=.9\textwidth,
                height=.6\textwidth,
                ymin=0,
                ymax=7
            ]
            \addplot[color=blue,thick] coordinates {
                (0,0)(1,1)
            };
            \addplot [color=blue, thick] coordinates { 
                (1,0)(1.5,0.433231)(2,0.858301)(2.5,0.770878)(3,0.852444)(3.5,0.946821)(4,1.01905)(4.5,1.08943)(5,1.15667)(5.5,1.22086)(6,1.28249)(6.5,1.34185)(7,1.3992)(7.5,1.45473)(8,1.50863)(8.5,1.56105)(9,1.61211)(9.5,1.66194)(10,1.71061)(10.5,1.75823)(11,1.80484)(11.5,1.85054)(12,1.89536)(12.5,1.93936)(13,1.9826)(13.5,2.0251)(14,2.06692)(14.5,2.10808)(15,2.14862)(15.5,2.18858)(16,2.22796)(16.5,2.26681)(17,2.30515)(17.5,2.343)(18,2.38037)(18.5,2.41729)(19,2.45377)(19.5,2.48983)(20,2.52549)(20.5,2.56076)(21,2.59565)(21.5,2.63019)(22,2.66437)(22.5,2.69821)(23,2.73172)(23.5,2.76491)(24,2.7978)(24.5,2.83038)(25,2.86267)(25.5,2.89469)(26,2.92642)(26.5,2.9579)(27,2.98911)(27.5,3.02007)(28,3.05078)(28.5,3.08125)(29,3.11149)(29.5,3.1415)(30,3.17129)(30.5,3.20086)(31,3.23022)(31.5,3.25937)(32,3.28832)(32.5,3.31708)(33,3.34564)(33.5,3.37401)(34,3.40219)(34.5,3.4302)(35,3.45802)(35.5,3.48567)(36,3.51316)(36.5,3.54047)(37,3.56762)(37.5,3.59461)(38,3.62145)(38.5,3.64813)(39,3.67466)(39.5,3.70103)(40,3.72727)(40.5,3.75336)(41,3.77931)(41.5,3.80512)(42,3.83079)(42.5,3.85633)(43,3.88174)(43.5,3.90702)(44,3.93217)(44.5,3.9572)(45,3.98211)(45.5,4.00689)(46,4.03155)(46.5,4.0561)(47,4.08053)(47.5,4.10485)(48,4.12906)(48.5,4.15315)(49,4.17714)(49.5,4.20102)(50,4.22479)(50.5,4.24847)(51,4.27203)(51.5,4.2955)(52,4.31887)(52.5,4.34214)(53,4.36531)(53.5,4.38838)(54,4.41136)(54.5,4.43425)(55,4.45705)(55.5,4.47976)(56,4.50237)(56.5,4.5249)(57,4.54734)(57.5,4.5697)(58,4.59196)(58.5,4.61415)(59,4.63625)(59.5,4.65827)(60,4.68021)(60.5,4.70207)(61,4.72385)(61.5,4.74556)(62,4.76718)(62.5,4.78873)(63,4.8102)(63.5,4.8316)(64,4.85293)(64.5,4.87418)(65,4.89536)(65.5,4.91647)(66,4.9375)(66.5,4.95847)(67,4.97937)(67.5,5.0002)(68,5.02097)(68.5,5.04166)(69,5.06229)(69.5,5.08286)(70,5.10336)(70.5,5.1238)(71,5.14417)(71.5,5.16448)(72,5.18473)(72.5,5.20491)(73,5.22504)(73.5,5.2451)(74,5.26511)(74.5,5.28505)(75,5.30494)(75.5,5.32477)(76,5.34454)(76.5,5.36426)(77,5.38392)(77.5,5.40352)(78,5.42307)(78.5,5.44256)(79,5.462)(79.5,5.48138)(80,5.50071)(80.5,5.51999)(81,5.53921)(81.5,5.55839)(82,5.57751)(82.5,5.59658)(83,5.6156)(83.5,5.63457)(84,5.65349)(84.5,5.67236)(85,5.69118)(85.5,5.70996)(86,5.72868)(86.5,5.74736)(87,5.76599)(87.5,5.78458)(88,5.80311)(88.5,5.8216)(89,5.84005)(89.5,5.85845)(90,5.8768)(90.5,5.89512)(91,5.91338)(91.5,5.9316)(92,5.94978)(92.5,5.96792)(93,5.98601)(93.5,6.00406)(94,6.02207)(94.5,6.04003)(95,6.05796)(95.5,6.07584)(96,6.09368)(96.5,6.11148)(97,6.12924)(97.5,6.14696)(98,6.16464)(98.5,6.18228)(99,6.19988)(99.5,6.21745)
            };
            \end{axis}
        \end{tikzpicture}
        \caption{$\nu(\ell) = 1$.}
    \end{subfigure}%
    \begin{subfigure}{.5\textwidth}
        \begin{tikzpicture}[scale=1]
            \begin{axis} [axis lines = left,
                xlabel = \(L\),
                ylabel = {$\EE[S_L]$},
                width=.9\textwidth,
                height=.6\textwidth,
                ymin=0,
                xmin=0,
                ymax=7
            ]
            \addplot[color=blue,thick] coordinates {
                (0,0)(1,1)
            };
            \addplot [color=blue, thick] coordinates { 
                (1,0)(1.5,0.425582)(2,0.806816)(2.5,0.731057)(3,0.781254)(3.5,0.824283)(4,0.847804)(4.5,0.863477)(5,0.873452)(5.5,0.879678)(6,0.883526)(6.5,0.885886)(7,0.887329)(7.5,0.888207)(8,0.888742)(8.5,0.889066)(9,0.889264)(9.5,0.889383)(10,0.889456)(10.5,0.8895)(11,0.889527)(11.5,0.889543)(12,0.889553)(12.5,0.889559)(13,0.889562)(13.5,0.889564)(14,0.889566)(14.5,0.889566)(15,0.889567)(15.5,0.889567)(16,0.889567)(16.5,0.889568)(17,0.889568)(17.5,0.889568)(18,0.889568)(18.5,0.889568)(19,0.889568)(19.5,0.889568)(20,0.889568)(20.5,0.889568)(21,0.889568)(21.5,0.889568)(22,0.889568)(22.5,0.889568)(23,0.889568)(23.5,0.889568)(24,0.889568)(24.5,0.889568)(25,0.889568)(25.5,0.889568)(26,0.889568)(26.5,0.889568)(27,0.889568)(27.5,0.889568)(28,0.889568)(28.5,0.889568)(29,0.889568)(29.5,0.889568)(30,0.889568)(30.5,0.889568)(31,0.889568)(31.5,0.889568)(32,0.889568)(32.5,0.889568)(33,0.889568)(33.5,0.889568)(34,0.889568)(34.5,0.889568)(35,0.889568)(35.5,0.889568)(36,0.889568)(36.5,0.889568)(37,0.889568)(37.5,0.889568)(38,0.889568)(38.5,0.889568)(39,0.889568)(39.5,0.889568)(40,0.889568)(40.5,0.889568)(41,0.889568)(41.5,0.889568)(42,0.889568)(42.5,0.889568)(43,0.889568)(43.5,0.889568)(44,0.889568)(44.5,0.889568)(45,0.889568)(45.5,0.889568)(46,0.889568)(46.5,0.889568)(47,0.889568)(47.5,0.889568)(48,0.889568)(48.5,0.889568)(49,0.889568)(49.5,0.889568)(50,0.889568)(50.5,0.889568)(51,0.889568)(51.5,0.889568)(52,0.889568)(52.5,0.889568)(53,0.889568)(53.5,0.889568)(54,0.889568)(54.5,0.889568)(55,0.889568)(55.5,0.889568)(56,0.889568)(56.5,0.889568)(57,0.889568)(57.5,0.889568)(58,0.889568)(58.5,0.889568)(59,0.889568)(59.5,0.889568)(60,0.889568)(60.5,0.889568)(61,0.889568)(61.5,0.889568)(62,0.889568)(62.5,0.889568)(63,0.889568)(63.5,0.889568)(64,0.889568)(64.5,0.889568)(65,0.889568)(65.5,0.889568)(66,0.889568)(66.5,0.889568)(67,0.889568)(67.5,0.889568)(68,0.889568)(68.5,0.889568)(69,0.889568)(69.5,0.889568)(70,0.889568)(70.5,0.889568)(71,0.889568)(71.5,0.889568)(72,0.889568)(72.5,0.889568)(73,0.889568)(73.5,0.889568)(74,0.889568)(74.5,0.889568)(75,0.889568)(75.5,0.889568)(76,0.889568)(76.5,0.889568)(77,0.889568)(77.5,0.889568)(78,0.889568)(78.5,0.889568)(79,0.889568)(79.5,0.889568)(80,0.889568)(80.5,0.889568)(81,0.889568)(81.5,0.889568)(82,0.889568)(82.5,0.889568)(83,0.889568)(83.5,0.889568)(84,0.889568)(84.5,0.889568)(85,0.889568)(85.5,0.889568)(86,0.889568)(86.5,0.889568)(87,0.889568)(87.5,0.889568)(88,0.889568)(88.5,0.889568)(89,0.889568)(89.5,0.889568)(90,0.889568)(90.5,0.889568)(91,0.889568)(91.5,0.889568)(92,0.889568)(92.5,0.889568)(93,0.889568)(93.5,0.889568)(94,0.889568)(94.5,0.889568)(95,0.889568)(95.5,0.889568)(96,0.889568)(96.5,0.889568)(97,0.889568)(97.5,0.889568)(98,0.889568)(98.5,0.889568)(99,0.889568)(99.5,0.889568)
            };
            \end{axis}
        \end{tikzpicture}
        \caption{$\nu(\ell) = e^{\ell}$.}
    \end{subfigure}
    \caption{$\EE[S_L]$ grows sublinearly under various divergent ldfs.}
\end{figure}
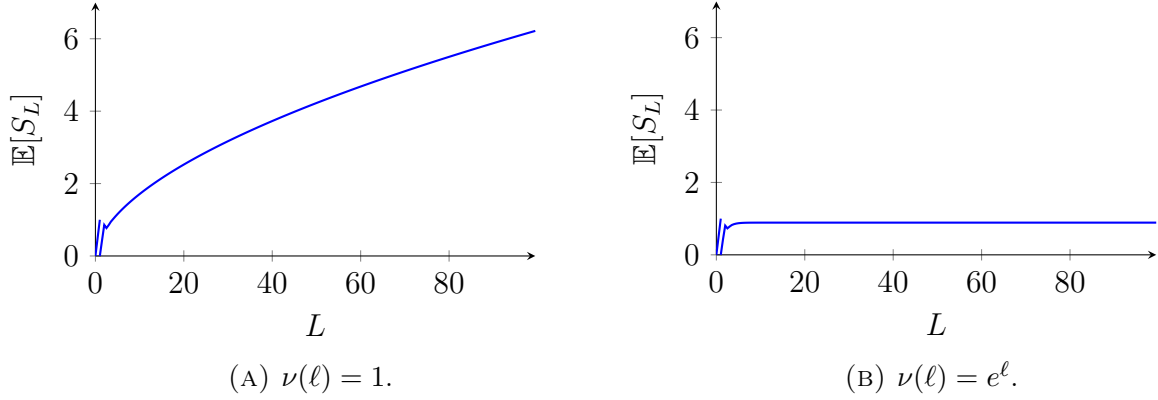

\section{Power Function Distribution}\label{plaw}

In this section, we consider the case when the ldf is given by a power-law function, $\nu(\ell) = (\ell-1)^{\beta-1}$ with $\beta > 0$. This is an example of a divergent ldf, where $\EE[S_L] = o(L)$ per~\cref{div-thm}. However, by using the nice property that the convolution of two power functions is again a power function, we are in fact able to derive much sharper bounds on $\EE[S_L]$.
We will show that $\EE[S_L]$ is intimately connected to a beta distribution associated the the power law. For the rest of this section, we will take $\Beta$ to be the Beta function, given by
\[ \Beta(z_1,z_2) = \int_0^1 t^{z_1-1} (1-t)^{z_2-1} \df{t} = \frac{\Gamma(z_1) \Gamma(z_2)}{\Gamma(z_1 + z_2)}. \]

\begin{obs}\label{plaw-conv}
        Let $\Xi : \RR^{>0} \rightarrow \RR^{>0}$ be a function of $\beta > 0$, defined as the positive solution in $\theta$ to
    \[ \Beta(\beta+1, \theta+1) = \frac{1}{2(\beta+1)}. \]
    Then, the function $\Xi$ is well-defined and at most $1$.
\end{obs}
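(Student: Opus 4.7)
The plan is to analyze the function $f_\beta(\theta) := \Beta(\beta+1, \theta+1)$ for $\theta \geq 0$ with $\beta > 0$ fixed, using the integral representation $f_\beta(\theta) = \int_0^1 t^{\beta}(1-t)^{\theta} \df{t}$. For each $t \in (0,1)$, the function $\theta \mapsto (1-t)^{\theta}$ is strictly decreasing, while $t^{\beta} > 0$, so the integrand is strictly decreasing pointwise on a set of positive measure. This yields strict monotonic decrease of $f_\beta$ on $[0,\infty)$, and continuity follows from dominated convergence with dominating function $t^{\beta}$.

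Next I would compute the boundary values. Setting $\theta = 0$ gives $f_\beta(0) = \int_0^1 t^{\beta} \df{t} = \tfrac{1}{\beta+1}$, while dominated convergence (again with $t^\beta$ as envelope) yields $\lim_{\theta \to \infty} f_\beta(\theta) = 0$. The target value $\tfrac{1}{2(\beta+1)}$ lies strictly between $0$ and $f_\beta(0)$, so by the intermediate value theorem combined with strict monotonicity, there is a unique positive $\theta$ with $f_\beta(\theta) = \tfrac{1}{2(\beta+1)}$; this is $\Xi(\beta)$, establishing well-definedness.

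Finally, to show $\Xi(\beta) \le 1$, I would use monotonicity: since $f_\beta$ is strictly decreasing, $\Xi(\beta) \le 1$ is equivalent to $f_\beta(1) \le \tfrac{1}{2(\beta+1)}$. Computing
\[ f_\beta(1) = \Beta(\beta+1, 2) = \int_0^1 t^{\beta}(1-t) \df{t} = \frac{1}{\beta+1} - \frac{1}{\beta+2} = \frac{1}{(\beta+1)(\beta+2)}, \]
the desired inequality becomes $\tfrac{1}{(\beta+1)(\beta+2)} \le \tfrac{1}{2(\beta+1)}$, which simplifies to $\beta + 2 \ge 2$, and in fact strictly for $\beta > 0$. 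Hence $\Xi(\beta) < 1$ for all $\beta > 0$, completing the observation. There is no substantive obstacle here: the entire argument reduces to monotonicity of $f_\beta$ in $\theta$ plus an elementary algebraic inequality.
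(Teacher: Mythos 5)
Your proposal is correct and follows essentially the same route as the paper: both use the integral representation $\Beta(\beta+1,\theta+1)=\int_0^1 t^\beta(1-t)^\theta\,\mathrm{d}t$ to get strict monotonic decrease in $\theta$, evaluate at $\theta=0$ and $\theta=1$ to get $\tfrac{1}{\beta+1}$ and $\tfrac{1}{(\beta+1)(\beta+2)}$, and conclude by continuity and the intermediate value theorem that the solution exists, is unique, and lies in $(0,1)$. The only cosmetic difference is that you invoke the limit at $\theta\to\infty$ before restricting to $[0,1]$, whereas the paper applies the IVT directly on $[0,1]$.
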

\begin{proof}
    We first note $ \Beta(\beta+1, \theta+1) = \int_0^1 t^\beta (1-t)^\theta \df{t}.$ The right hand side is strictly decreasing in $\theta$, so $\Beta(\beta+1, \theta+1)$ must also be decreasing in $\theta$. When $\theta = 0$, $\Beta(\beta+1, \theta+1) = \frac{1}{\beta+1} > \frac{1}{2(\beta+1)}$. Meanwhile, when $\theta = 1$, $\Beta(\beta+1, \theta+1) = \frac{1}{(\beta + 1)(\beta + 2)} < \frac{1}{2(\beta+1)}.$ By the continuity of $\Beta$, there must exist $\theta$ with $0 < \theta < 1$ for which $\Beta(\beta+1, \theta+1) = \frac{1}{2 (\beta+1)}$. This equation is equivalent to the condition in the corollary statement. Moreover, because $\Beta(\beta+1, \theta+1)$ is strictly decreasing in $\theta$, our value of $\theta$ is unique, proving that $\Xi$ is well-defined.
\end{proof}

For $\nu(\ell) = (\ell-1)^{\beta-1}$, we will prove $\EE[S_L]$ is bounded below by power functions of the form $L^{\Xi(\beta)-\eps}$, and above by $L^{\Xi(\beta)}$.

\begin{theorem}\label{plaw-asymptotics}
    Fix $\beta > 0$ and consider ldf $\nu(\ell) = (\ell-1)^{\beta-1}$. Then, in the $\nu$-RSA process, for all $\eps > 0$,
    \[ L^{\Xi(\beta)-\eps} \ll \EE[S_L] \leq L^{\Xi(\beta)}. \]
\end{theorem}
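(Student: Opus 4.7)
Both bounds follow from comparing $\EE[S_L]$ against power functions via the recurrence in \cref{ldf-integral-recurrence}. For $\nu(\ell) = (\ell-1)^{\beta-1}$, one computes $Z_\nu(L) = (L-1)^\beta/\beta$ and $\int_0^L Z_\nu(t)\df t = (L-1)^{\beta+1}/(\beta(\beta+1))$ for $L \ge 1$, so \cref{ldf-integral-recurrence} reduces to
\[
\EE[S_L] = \frac{2(\beta+1)}{(L-1)^{\beta+1}} \int_0^{L-1} \EE[S_t]\,(L-1-t)^\beta \df t.
\]
The engine of the proof is the Beta-function identity $\int_0^M t^\alpha (M-t)^\beta \df t = M^{\alpha+\beta+1}\Beta(\alpha+1,\beta+1)$, together with the defining equation $2(\beta+1)\Beta(\Xi(\beta)+1,\beta+1) = 1$ of $\Xi(\beta)$.

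For the upper bound $\EE[S_L] \le L^{\Xi(\beta)}$, I would proceed by strong induction on $L$. The base case $L \le 1$ holds because $\EE[S_L] = L \le L^{\Xi(\beta)}$ using $\Xi(\beta) \le 1$ from \cref{plaw-conv}. For $L > 1$, the inductive hypothesis $\EE[S_t] \le t^{\Xi(\beta)}$ for all $t < L$ combined with the identity above yields
\[
\EE[S_L] \le \frac{2(\beta+1)\,(L-1)^{\Xi(\beta)+\beta+1}\,\Beta(\Xi(\beta)+1,\beta+1)}{(L-1)^{\beta+1}} = (L-1)^{\Xi(\beta)} \le L^{\Xi(\beta)}.
\]

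For the lower bound, fix $\eps > 0$ and set $\theta := \Xi(\beta) - \eps/2$. Since $\Beta(\alpha+1,\beta+1)$ is strictly decreasing in $\alpha$, the constant $K := 2(\beta+1)\Beta(\theta+1,\beta+1)$ satisfies $K > 1$. The plan is to prove, for suitable $c > 0$ and $B > 1$, that $\EE[S_L] \ge c(L-B)^\theta$ holds for all $L \ge B$, which immediately gives $\EE[S_L]/L^{\Xi(\beta)-\eps} \to \infty$. In the inductive step one uses the hypothesis on $t \in [B, L-1]$ (dropping the nonnegative contribution from $[0, B)$) and substitutes $u = t-B$ to obtain
\[
\EE[S_L] \ge \frac{cK\,(L-1-B)^{\theta+\beta+1}}{(L-1)^{\beta+1}},
\]
which exceeds $c(L-B)^\theta$ whenever $K \ge \bigl(1 + B/(L-1-B)\bigr)^{\beta+1}\bigl(1 + 1/(L-1-B)\bigr)^\theta$. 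The right-hand side tends to $1 < K$ as $L \to \infty$, so the step closes for $L$ past some threshold $L_1 = L_1(B)$. For the compact base range $L \in [B, L_1]$, the continuity and strict positivity of $\EE[S_L]$ on $(1, \infty)$ (both inherited from the recurrence) imply that $\EE[S_L]/(L-B)^\theta$ is bounded below by a positive constant on $(B, L_1]$, so any sufficiently small $c$ closes the base case.

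The main obstacle is calibrating the shift $B > 1$ in the lower bound: the shift avoids degenerate behavior near $L = 1$, and the slack $K > 1$, bought by dropping $\Xi(\beta)$ by $\eps/2$, is exactly what absorbs the multiplicative boundary errors $(1 + B/(L-1-B))^{\beta+1}$ and $(1 + 1/(L-1-B))^\theta$ introduced by the mismatched power-law scalings. The upper bound requires no such sacrifice, since $(L-1)^{\Xi(\beta)} \le L^{\Xi(\beta)}$ is lossless.
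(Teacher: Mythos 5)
Your proposal is correct and follows essentially the same route as the paper: the same specialization of \cref{ldf-integral-recurrence}, the same Beta-function identity, the exact constant $2(\beta+1)\Beta(\Xi(\beta)+1,\beta+1)=1$ for the upper bound via induction in unit steps, and the slack $2(\beta+1)\Beta(\theta+1,\beta+1)>1$ for $\theta<\Xi(\beta)$ to drive the lower bound. The only (cosmetic) difference is in handling the left-endpoint degeneracy for the lower bound --- you compare against the shifted power $c(L-B)^\theta$ and absorb the boundary ratios into the slack, whereas the paper rescales to $f_\theta(L)=\EE[S_L]/L^\theta$, truncates the resulting Beta-weighted average to $[2/L,1]$, and propagates a positive infimum; both rely equally on continuity and positivity of $\EE[S_L]$ on a compact interval.
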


\begin{remark}\label{puniform-asymptotic}
    Consider the case when $\beta = 1$, with ldf $\nu(\ell) = 1$. Then, $\Beta(\beta+1, \theta+1) = \frac{1}{2(\beta+1)}$ reduces to the polynomial equation
    \[ (\theta+1) (\theta+2) = 4, \]
    which has unique positive solution $\theta = \Xi(1) = \frac{\sqrt{17}-3}{2} \approx 0.562.$ \cref{plaw-asymptotics} now implies that for all $\eps > 0$,
    \[ L^{(\sqrt{17}-3)/2 - \eps} \ll \EE[S_L] \leq L^{(\sqrt{17}-3)/2}. \]

    Similar processes to the $\nu$-RSA process with the uniform length distribution $\nu(\ell) = 1$ have been studied before. In particular, Coffman et. al. in \cite{coffman} analyze a process in which they park segments with lengths drawn uniformly at random from $[0,L]$. Instead of studying the empty space left at saturation (arbitrarily small segments may be parked in their model, so it never in fact reaches saturation), they study the expected number of parked segments after $n$ attempts to park segments. Using methods different from ours, they derive that the number of parked segments grows as $n^{(\sqrt{17}-3)/2}$. Notably, this exponent is exactly the exponent that we derive for the empty space at saturation in \cref{puniform-asymptotic}, providing an interesting connection between these two related processes.
\end{remark}

\begin{proof}[Proof of \cref{plaw-asymptotics}]
    For simplicity, we take $\nu (\ell) = \frac{1}{\beta} (\ell - 1)^{\beta-1}$ (for $\ell \geq 1$) so that $Z_\nu (L) = (L - 1)^\beta$ (for $L \geq 1$). Define $f_\theta(L) := \frac{\EE[S_L]}{L^\theta}.$ We will prove $f_{\Xi(\beta)}(L) \le 1$, and that for all $\theta< \Xi(\beta)$, we have $f_\theta (L) \ge c_{\theta} > 0$. Let $\mu : [0,1] \rightarrow \RR$ be the pdf of $\text{Beta}(\theta + 1, \beta+ 1)$:
    \[ \mu(t) :=  \frac{t^\theta (1-t)^\beta}{\Beta(\theta+1, \beta+1)}, \]
    so that $\int_0^1 \mu(t) \df{t} = 1$. We now use $\mu$ to derive an integral recurrence relation on $f_\theta$.

    \begin{lemma}\label{plaw-int-recurrence}
        For $\theta, L > 0$,
        \[ f_\theta (L+1) = 2 (\beta+1) \Beta( \theta+1, \beta+1) \cdot \left(\frac{L}{L+1}\right)^{\theta} \cdot \int_0^1 f_\theta(L t) \mu(t) \df{t}. \]
    \end{lemma}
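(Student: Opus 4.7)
The plan is to apply the general integral recurrence from \cref{ldf-integral-recurrence} at interval length $L+1$ and specialize to the power-law ldf $\nu(\ell) = \frac{1}{\beta}(\ell-1)^{\beta-1}$, then substitute the definition $\EE[S_t] = t^\theta f_\theta(t)$ and rescale the integration variable so that the resulting kernel becomes the beta density $\mu$.

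First I would note that for the chosen $\nu$, we have $Z_\nu(L) = (L-1)^\beta$ for $L \geq 1$ and $Z_\nu(L) = 0$ for $L < 1$, so
\[ \int_0^{L+1} Z_\nu(t) \df{t} = \int_1^{L+1} (t-1)^\beta \df{t} = \frac{L^{\beta+1}}{\beta+1}, \]
and $Z_\nu(L+1-t) = (L-t)^\beta$ for $t \leq L$ and vanishes for $t > L$. Plugging these into \cref{ldf-integral-recurrence} at interval length $L+1$ gives
\[ \frac{L^{\beta+1}}{\beta+1}\, \EE[S_{L+1}] = 2 \int_0^{L} \EE[S_t] (L-t)^\beta \df{t}. \]

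Next I would substitute $\EE[S_t] = t^\theta f_\theta(t)$ and perform the change of variables $t = Lu$, which turns the right-hand side into
\[ 2 L^{\theta + \beta + 1} \int_0^1 u^\theta (1-u)^\beta f_\theta(Lu) \df{u}. \]
Dividing through by $\frac{L^{\beta+1}}{\beta+1}$ and then by $(L+1)^\theta$ yields
\[ f_\theta(L+1) = 2(\beta+1)\left(\frac{L}{L+1}\right)^\theta \int_0^1 u^\theta(1-u)^\beta f_\theta(Lu) \df{u}. \]

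Finally, I would rewrite $u^\theta (1-u)^\beta = \Beta(\theta+1,\beta+1)\, \mu(u)$ using the definition of $\mu$, which absorbs the missing factor and produces exactly the stated identity. There is no serious obstacle here: the whole argument is bookkeeping on constants and careful tracking of the support of $Z_\nu$, and the reason the beta density appears is simply that the convolution kernel $(L-t)^\beta$ together with the weight $t^\theta$ produces a beta integrand after rescaling — which is precisely what motivates the later choice of $\theta = \Xi(\beta)$ so that $2(\beta+1)\Beta(\theta+1,\beta+1) = 1$.
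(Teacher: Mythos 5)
Your proposal is correct and follows essentially the same route as the paper: apply \cref{ldf-integral-recurrence} at interval length $L+1$, substitute $Z_\nu(L)=(L-1)^\beta$ and $\EE[S_t]=t^\theta f_\theta(t)$, and rescale $t=Lu$ so the kernel becomes the beta density. The only difference is that you spell out the bookkeeping (the value of $\int_0^{L+1}Z_\nu$ and the support truncation of the convolution) that the paper leaves implicit.
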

    \begin{proof}
        By \cref{ldf-integral-recurrence} with $L+1$ as the length of the interval,
        \[ \left( \int_0^{L+1} Z_\nu (t) \df{t} \right) \EE[S_{L+1}] = 2 \int_0^{L+1} \EE[S_{t}] Z_\nu (L + 1 - t) \df{t}. \] 
        Substituting in $Z_\nu (L) = (L-1)^\beta$ (when $L > 1$) and $\EE[S_L] = L^\theta f_\theta (L)$ yields
        \[ f_\theta (L+1) = \frac{2(\beta+1) L^{\theta}}{(L+1)^{\theta} } \cdot \int_0^L \frac{t^\theta (L-t)^\beta }{L^{\theta + \beta+1}} f_\theta(t) \df{t}. \]
        Substituting $Lt$ for $t$ and rearranging then proves the lemma.
    \end{proof}
    
    We may now establish the upper bound of the theorem. By \cref{plaw-int-recurrence} with $\theta = \Xi(\beta)$ (and noting $2 (\beta+1) \Beta( \Xi(\beta)+1, \beta+1) = 1$ by the definition of $\Xi$), we have
    \begin{equation}\label{plaw-upperbound}
        f_{\Xi(\beta)}(L+1) = \left(\frac{L}{L+1} \right)^{\Xi(\beta) } \int_0^1 f_{\Xi(\beta)}(Lt) \mu(t) \df{t} < \linechecktext{\int_0^1 f_{\Xi(\beta)}(Lt) \mu(t) \df{t} <} \sup_{0 \leq t \leq L} f_{\Xi(\beta)}(t).
    \end{equation}
    But $ \EE[S_L] = L $ for $L < 1$. Because $\Xi(\beta) < 1$, we have $f_{\Xi(\beta)} (L) = L^{1-\Xi(\beta)} < 1$ when $L < 1$. \cref{plaw-upperbound} then implies that $f_{\Xi(\beta)} (L) < 1$ for all $L$, and thus that $\EE[S_L] \leq L^{\Xi(\beta)}$.

    Now, fix any $\theta < \Xi (\beta)$. Recall $ \Beta( \theta+1, \beta+1 ) $ is decreasing in $\theta$ (c.f. \cref{plaw-conv}), so $2 (\beta+1) \Beta( \theta+1, \beta+1 ) > 1.$ Then, there exists some $\eps > 0$ and some lower bound $L_1 > 0$ such that $L > L_1$ implies
    \[ 2 (\beta+1) \Beta( \theta+1, \beta+1 ) \cdot \left(\frac{L}{L+1}\right)^{\theta} > 1 + \eps,\]
    and thus that $f_\theta( L + 1) > (1+\eps) \int_0^1 f_\theta(Lt) \mu(t) \df{t}.$

    Choose sufficiently large $L_2$ for which $\int_{2/L}^1 \mu(t) \df{t} > \frac{1}{1+ \eps}$ for all $L > L_2$. Let $L^\star = \sup \{ L_1, L_2\}$, so that for $L > L^\star,$
    \begin{equation}\label{plaw-lowerbound}
        f_\theta(L+1) > (1+\eps) \int_{2/L}^1 f_\theta(Lt) \mu(t) \df{t} \geq \inf_{2 \leq t \leq L} f_\theta (t).
    \end{equation}
    Note that \cref{plaw-int-recurrence} implies $f_\theta (L) > 0 $ for $L > 1$. Then, by \cref{plaw-lowerbound}, for all $L > L^\star,$ we have $f_\theta (L) > \sup_{ 2 \leq t \leq L^\star} f_\theta(t),$ where $\sup_{ 2 \leq t \leq L^\star} f_\theta(t)$ is positive.

    Thus, for every $\theta < \Xi(\beta)$, we have shown there is a constant $c_\theta > 0$ such that $f_\theta(L) > c_\theta$  for sufficiently large $L$, or equivalently that $\EE[S_L] > c_\theta L^\theta$. This in fact implies $\EE[S_L] \gg L^\theta$ for all $\theta < \Xi(\beta)$, completing the proof of the theorem.
\end{proof}

We may also derive the following more tractable form of $\Xi(\beta)$ when $\beta$ is an integer.

\begin{corollary}\label{plaw-int-asymptotics}
    If $\beta > 0$ is an integer, then let $\theta$ be the unique positive root to
    $$\prod_{i=1}^{\beta+1} (\theta + i) = 2 (\beta + 1)!.$$
    Consider ldf $\nu (\ell) = (\ell - 1)^{\beta-1}$. Then, in the $\nu$-RSA process, for all $\eps > 0 $,
    \[ L^{\theta-\eps} \ll \EE[S_L] \leq L^{\theta}. \]
\end{corollary}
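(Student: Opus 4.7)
The plan is to reduce the corollary directly to Theorem \ref{plaw-asymptotics} by showing that when $\beta$ is a positive integer, the function $\Xi(\beta)$ (defined implicitly via the Beta function in Observation \ref{plaw-conv}) coincides with the unique positive root of the polynomial equation $\prod_{i=1}^{\beta+1}(\theta + i) = 2(\beta+1)!$. Once this identification is established, the asymptotic bounds $L^{\theta - \eps} \ll \EE[S_L] \leq L^\theta$ follow immediately by substituting $\theta = \Xi(\beta)$ into the conclusion of Theorem \ref{plaw-asymptotics}.

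The key computational step is the following: by the classical identity relating the Beta and Gamma functions,
\[ \Beta(\beta+1, \theta+1) = \frac{\Gamma(\beta+1)\,\Gamma(\theta+1)}{\Gamma(\beta + \theta + 2)}. \]
When $\beta$ is a positive integer, $\Gamma(\beta+1) = \beta!$, and the ratio $\Gamma(\theta+1)/\Gamma(\beta+\theta+2)$ telescopes via the functional equation $\Gamma(z+1) = z \Gamma(z)$ to
\[ \frac{\Gamma(\theta+1)}{\Gamma(\beta+\theta+2)} = \frac{1}{(\theta+1)(\theta+2)\cdots(\theta+\beta+1)} = \frac{1}{\prod_{i=1}^{\beta+1}(\theta + i)}. \]
Thus $\Beta(\beta+1, \theta+1) = \beta!\big/\prod_{i=1}^{\beta+1}(\theta+i)$.

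Setting this equal to $\frac{1}{2(\beta+1)}$, as in the defining equation of $\Xi(\beta)$, and cross-multiplying yields $\prod_{i=1}^{\beta+1}(\theta+i) = 2(\beta+1)\cdot \beta! = 2(\beta+1)!$, which is precisely the polynomial equation in the corollary. Uniqueness of the positive root follows from Observation \ref{plaw-conv} (or alternatively from the fact that $\prod_{i=1}^{\beta+1}(\theta+i)$ is strictly increasing in $\theta > 0$). With $\Xi(\beta) = \theta$ identified, the bounds from Theorem \ref{plaw-asymptotics} transfer verbatim, completing the proof.

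There is essentially no obstacle here; the corollary is purely a restatement of Theorem \ref{plaw-asymptotics} in the special case where the Beta function simplifies to a polynomial expression, and the only work is the Gamma-function bookkeeping above.
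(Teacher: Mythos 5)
Your proposal is correct and follows essentially the same route as the paper: reduce the defining equation $\Beta(\theta+1,\beta+1)=\frac{1}{2(\beta+1)}$ to the polynomial $\prod_{i=1}^{\beta+1}(\theta+i)=2(\beta+1)!$ via the Gamma-function identity, invoke \cref{plaw-conv} for uniqueness, and then apply \cref{plaw-asymptotics}. Your write-up simply makes explicit the telescoping computation that the paper leaves as a one-line remark.
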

\begin{proof}
    When $\beta$ is an integer, the equation $\Beta( \theta+1, \beta+1 ) = \frac{1}{2(\beta+1)} $  reduces to the polynomial $\prod_{i=1}^{\beta+1} (\theta + i) = 2 (\beta + 1)!$. The uniqueness of $\theta$ follows from \cref{plaw-conv}, and then \cref{plaw-int-asymptotics} immediately follows from \cref{plaw-asymptotics}.
\end{proof}

\bibliography{bib}
\bibliographystyle{unsrt}

\end{document}